\documentclass[aism]{imsart}

\RequirePackage[authoryear]{natbib}

\RequirePackage{amsthm,amsmath,amsfonts,amssymb}

\RequirePackage[colorlinks,citecolor=blue,urlcolor=blue]{hyperref}
\RequirePackage{graphicx}

\startlocaldefs

\theoremstyle{plain}

\newtheorem{theorem}{Theorem}
\newtheorem{lemma}[theorem]{Lemma}
\newtheorem{proposition}[theorem]{Proposition}
\newtheorem{corollary}[theorem]{Corollary}
\theoremstyle{remark}
\newtheorem{definition}{Definition}[section]
\newtheorem*{example}{Example}


\usepackage{amsmath,amssymb,amsfonts,amsthm,bbm,empheq}
\usepackage{epic,eepic,epsfig,longtable}
\usepackage{multirow,multicol,verbatim}
\usepackage{array}
\usepackage{lscape}
\usepackage{graphicx}
\usepackage{latexsym}
\usepackage{comment}
\usepackage{booktabs}
\usepackage{epsfig}
\usepackage{CJK}
\usepackage{color}
\usepackage{mathtools}
\usepackage{extarrows}
\usepackage{centernot}
\usepackage{caption}
\usepackage{subcaption}
\usepackage{tikz}
\usetikzlibrary{shapes.multipart,shapes.geometric,spy}

\setcounter{equation}{0}


\NewCommandCopy{\baccent}{\b}
\AtBeginDocument{%
  \DeclareRobustCommand{\b}[1]{\ifmmode\mathbf{#1}\else\baccent{#1}\fi}%
}












\def\[{\left [}  \def\]{\right ]} \def\({\left (}  \def\){\right )}

\makeatletter
\def\underbar#1{\underline{\sbox\tw@{$#1$}\dp\tw@\z@\box\tw@}}
\makeatother


\def\newpage{\vfill\eject}
\def\today{\ifcase\month\or
  January\or February\or March\or April\or May\or June\or
  July\or August\or September\or October\or November\or December\fi
  \space\number\day, \number\year}

\usepackage{stackengine} 
\stackMath

\setstackgap{L}{8pt}

\allowdisplaybreaks
\setcounter{section}{0}

\setcounter{page}{1}
\usepackage{verbatim}


%
\usepackage{lipsum}
\usepackage{enumitem}
\usepackage{hyperref}
\usepackage{xcolor}

\def\:={\coloneqq}
\def\=:{\eqqcolon}
%

\theoremstyle{plain}
\ifx\theorem\undefined
  \newtheorem{theorem}{Theorem}[section]
\fi
\ifx\proposition\undefined
  \newtheorem{proposition}{Proposition}[section]
\fi
\ifx\corollary\undefined
  \newtheorem{corollary}{Corollary}[section]
\fi
\ifx\lemma\undefined
  \newtheorem{lemma}{Lemma}[section]
\fi

\theoremstyle{definition}
\ifx\definition\undefined
  \newtheorem{definition}{Definition}[section]
\fi
\ifx\assumption\undefined
  
\fi
\ifx\example\undefined
  
\fi
\ifx\exercise\undefined
  
\fi
\ifx\conjecture\undefined
  
\fi

\theoremstyle{remark}
\ifx\remark\undefined
  
\fi


\usepackage[capitalise,nameinlink,noabbrev]{cleveref}
\crefformat{equation}{Eq.\ (#2#1#3)}
\crefrangeformat{equation}{Eqs.\ (#3#1#4) to~(#5#2#6)}
\crefmultiformat{equation}{Eq.\ (#2#1#3)}%
{ and~(#2#1#3)}{, (#2#1#3)}{ and~(#2#1#3)}
\crefname{assumption}{Assumption}{Assumptions}
\crefname{theorem}{Theorem}{Theorems}
\crefname{proposition}{Proposition}{Propositions}
\crefname{corollary}{Corollary}{Corollaries}
\crefname{lemma}{Lemma}{Lemmas}
\crefname{remark}{Remark}{Remarks}
\crefname{definition}{Definition}{Definitions}
\crefname{example}{Example}{Examples}

\newcommand{\reels}{\mathbb{R}}
\newcommand{\naturels}{\mathbb{N}}

\newcommand{\esp}{\mathbb{E}}
\newcommand{\proba}{\mathbb{P}}
\newcommand{\Tau}{\mathrm{T}}

\endlocaldefs

\begin{document}

\begin{frontmatter}
\title{Non-explicit formula of boundary crossing probabilities by the Girsanov theorem}
\runtitle{Non-explicit formula of boundary crossing probabilities}
\thankstext{T1}{The online version of this article contains supplementary material.}

\begin{aug}

\author[A]{\fnms{Yoann}~\snm{Potiron*}\ead[label=e1]{potiron@fbc.keio.ac.jp}},
\address[A]{Faculty of Business and Commerce, Keio University. 2-15-45 Mita, Minato-ku, Tokyo, 108-8345, Japan\printead[presep={,\ }]{e1}}
\end{aug}

\begin{abstract}
This paper derives several formulae for the probability that a
Wiener process, which has a stochastic drift and random variance, crosses a one-sided stochastic boundary within a finite time
interval. A non-explicit formula is first obtained by the Girsanov theorem
when considering an equivalent probability measure in which the
boundary is constant and equal to its starting value. A more explicit formula is then achieved by decomposing the
Radon-Nikodym derivative inverse. This decomposition expresses
it as the product of a random variable, which is measurable with
respect to the Wiener process’s final value, and an independent
random variable. We also provide an explicit formula based on a strong theoretical assumption. To apply the Girsanov theorem, we assume that the difference
between the drift increment and the boundary increment, divided by the standard deviation, is absolutely
continuous. Additionally, we assume that its derivative satisfies
Novikov’s condition.
\end{abstract}


\begin{keyword}
\kwd{Mathematical statistics}
\kwd{sequential analysis}
\kwd{first-passage time problem}
\kwd{boundary crossing probabilities}
\kwd{stochastic boundary process}
\kwd{Wiener process}
\kwd{Girsanov theorem}
\end{keyword}

\end{frontmatter}

\section{Introduction}
 This paper concerns boundary crossing probabilities, i.e., the probability that a stochastic process  crosses a boundary. The application of boundary crossing probabilities in statistics dates back to the one-sample Kolmogorov-Smirnov statistic,
where the process represents the difference between
the true and empirical cumulative distribution functions (cdfs). The primary application of boundary crossing probabilities is in
sequential analysis. Initially, the focus was on boundary crossing probabilities for random walks. Due to the complexity of
solving this problem, the literature often relies on continuous
approximations and develops theoretical tools for cases where
the process is a Wiener process (see Gut (\citeyear{gut1974moments}), Woodroofe (\citeyear{woodroofe1976renewal}), Woodroofe (\citeyear{woodroofe1977second}), Lai et al. (\citeyear{lai1977nonlinear}), Lai et al. (\citeyear{lai1979nonlinear}) and Siegmund (\citeyear{siegmund1986boundary})). 

Another field of application is in survival analysis. Matthews et al. (\citeyear{matthews1985asymptotic}) show that tests for constant hazard involve the first-passage time (FPT) of an Ornstein-Uhlenbeck process. Butler et al. (\citeyear{butler1997stochastic}) present a Bayesian approach when the process is semi-Markovian. Eaton et al. (\citeyear{eaton1977length}) discuss the application of FPT for hospital stay. Aalen et al. (\citeyear{aalen2001understanding}) study the case when the process is Markovian. Detailed reviews on FPT are available in Lee et al. (\citeyear{lee2006threshold}) and Lawless (\citeyear{lawless2011statistical}) (Section 11.5, pp. 518-523).

Another application is in pricing barrier options in mathematical finance (see Roberts et al. (\citeyear{roberts1997pricing})). There are also some applications in econometrics. Abbring (\citeyear{abbring2012mixed}) studies mixed FPT of a spectrally negative Levy process. Renault et al. (\citeyear{renault2014dynamic}) considers mixed FPT of the sum of a Wiener process and a positive linear drift. Potiron et al. (\citeyear{potiron2017estimation}) estimate the quadratic covariation between two price processes based on endogenous observations generated by FPT of an It\^{o}-semimartingale to a stochastic boundary process.  

Despite their importance for applications, explicit formulae of these boundary crossing probabilities only exist when the boundaries and the drift are linear. More specifically, Doob (\citeyear{doob1949heuristic}) gives explicit formulae (Equations (4.2)-(4.3), pp. 397-398) based on elementary geometrical and analytical arguments. They are obtained when the final time is not finite, the variance is nonrandom, the drift is null and the boundaries are nonrandom linear with nonnegative upper trend and nonpositive lower trend. Malmquist (\citeyear{malmquist1954certain}) obtains an explicit formula conditioned on the starting and final values of the Wiener process for a finite final time (Theorem 1, p. 526). This is obtained with  Doob's transformation (Section 5, pp. 401-402) in the one-sided boundary case. Anderson (\citeyear{anderson1960modification}) derives an explicit formula conditioned on the final value of the Wiener process (Theorem 4.2, pp. 178-179) in the two-sided boundary case with linear drift. Then, he integrates it with respect to the final value of the Wiener process to get an explicit formula (Theorem 4.3, p. 180). 

For square root boundaries, Breiman (\citeyear{breiman1967first}) expresses the problem as boundary crossing probabilities of an Ornstein-Uhlenbeck process to a constant boundary. They are obtained with Doob's transformation. However, the boundary crossing probabilities of an Ornstein-Uhlenbeck process to a constant boundary are only known in the form of Laplace transform. Daniels (\citeyear{daniels1969minimum}) uses the same technique and obtains an explicit formula. Finally, the boundary crossing probabilities of a jump diffusion process with linear drift to a constant boundary are obtained in the form of Laplace transform in Kou et al. (\citeyear{kou2003first}). Alili et al. (\citeyear{alili2005some}), Doney et al. (\citeyear{doney2006overshoots}) and Kypriano et al. (\citeyear{kyprianou2010exact}) consider a link between the first and last passage time and overshoot above/below a fixed level of a Lévy process. Potiron (\citeyear{potiron2023explicit}) obtains an explicit formula when the boundary is constant and the stochastic process is a continuous local martingale.
 
Since there is no available explicit formula when the drift and the boundaries are not linear, there is a large literature on approximating and computing numerically these boundary crossing probabilities. Strassen (\citeyear{strassen1967almost}) (Lemma 3.3, p. 323) shows that $P_g^Z$ is continuously differentiable when $g$ is continuously differentiable. Durbin (\citeyear{durbin1971boundary}, Wang et al. (\citeyear{wang1997boundary}) and Novikov eet al. (\citeyear{novikov1999approximations}) use piecewise-linear boundaries to approximate the general boundaries. Durbin (\citeyear{durbin1985first}) gives
a formula for a general boundary, which depends on asymptotic conditional expectations whose approximations are studied in Salminen (\citeyear{salminen1988first}).

In this paper, we derive several formulae for the one-sided and two-sided boundary crossing probability when the boundaries and drift are stochastic processes and the variance is random. Unfortunately, these formulae are either non-explicit or explicit, but based on a strong theoretical assumption. We derive the results in two cases, i.e., (i) a simpler case when the one-sided boundary and the drift are nonrandom time-varying and the variance is nonrandom and (ii) a more complicated case when the one-sided boundary and the drift are stochastic processes and the variance is random.

More specifically, consider a stochastic process $(Z_t)_{t \in \reels^+}$ defined as $Z_t = \mu_t + \sigma W_t$.
Here, $(\mu_t)_{t \in \reels^+}$ is a stochastic drift process, $(W_t)_{t \in \reels^+}$ is a standard Wiener process with random
time-invariant variance $\sigma^2$, and $(g_t)_{t \in \reels^+}$ and $(h_t)_{t \in \reels^+}$ are two 
stochastic boundary processes.  We focus on the probabilities of a process crossing one-sided
and two-sided boundaries, defined as follows
\begin{eqnarray}
\label{def_bcp}
P_g^Z(T) & = & \proba \big( \sup_{0 \leq t \leq T} Z_t - g_t   \geq 0\big),\\ \label{def_bcp2} P_{g,h}^Z(T) & = & \proba \big( \sup_{0 \leq t \leq T} Z_t - g_t   \geq 0 \text{ or } \sup_{0 \leq t \leq T} h_t - Z_t  \geq 0 \big), 
\end{eqnarray}
i.e., the probability that the process $Z$ crosses the boundary or one of both boundaries between $0$ and the final time $T$.

We describe first the main results in the one-sided (i) case. A non-explicit formula is first obtained by the Girsanov theorem when considering an equivalent probability measure in which the boundary is constant equal to its starting value. To apply the Girsanov theorem, the main idea consists in rewriting the boundary crossing probability of a time-varying boundary as an equivalent boundary crossing probability of a constant boundary. More specifically, we define the new nonrandom drift as $u_t=\frac{\mu_t-g_t+g_0}{\sigma}$, the new $\sigma(W_t)$-measurable process as $Y_t = u_t + W_t$ and the new constant boundary as $b=\frac{g_0-\mu_0}{\sigma}$. We then observe that the boundary crossing probability (\ref{def_bcp}) may be rewritten as $P_g^Z(T)  = P_b^Y(T)$. We thus obtain (see Proposition \ref{th_condproba_nonrandom_oneboundary})
\begin{eqnarray}
\label{intro_eq}
\proba (\Tau^Y_b \leq T | W_T)  =  \esp_{\mathbb{Q}} \big[\mathbf{1}_{\{\Tau^Y_b \leq T \}} M_T^{-1} | W_T \big]. 
\end{eqnarray}
Here, $M_T=\exp{\big(\overline{W}_T - \frac{1}{2} \int_0^T \theta_s^2 ds\big)}$ is the Radon-Nikodym derivative with $\overline{W}_T = \int_0^T \theta_s dW_s$, $u_T=\int_0^T \theta_s ds$, and $\theta$ is a nonrandom function $\theta : [0,T] \rightarrow \reels$.

A more explicit formula is then obtained by using an elementary decomposition of the Radon-Nikodym derivative inverse. This decomposition expresses it as the product of a random variable, which is $\sigma(W_T)$-measurable, and an independent random variable. More specifically, the decomposition is based on the fact that $(W_T,\overline{W}_T)$ is a centered normal random vector under $\proba$. It consists of rewriting $\overline{W}_T$ as $\overline{W}_T = \alpha W_T + \widetilde{\alpha} \widetilde{W}$, where $\widetilde{W}$ is a standard normal random variable under $\proba$, independent of $W_T$. Then, we obtain (see Theorem \ref{th_approximation_nonrandom_oneboundary}) 
\begin{eqnarray}
\nonumber \proba (\Tau^Y_b \leq T | W_T)  = \\ \label{intro_eq1}  \exp{\big(-\alpha W_T + \frac{1}{2} \int_0^T \theta_s^2 ds\big)} \esp_{\mathbb{Q}} \big[\mathbf{1}_{\{\Tau^Y_b \leq T \}}  \exp{\big(-\widetilde{\alpha} \widetilde{W}\big)} | W_T \big].
\end{eqnarray}

We also give a formula based on a strong theoretical assumption. More specifically, we 
assume that 
$$\esp_{\mathbb{Q}} \big[\mathbf{1}_{\{\Tau^Y_b \leq T \}}  \exp{\big(-\widetilde{\alpha} \widetilde{W}\big)} | W_T \big] = \esp_{\mathbb{Q}} \big[\mathbf{1}_{\{\Tau^Y_b \leq T \}}  | W_T \big] \esp_{\mathbb{Q}} \big[  \exp{\big(-\widetilde{\alpha} \widetilde{W}\big)} | W_T \big].$$ Then, the non-explicit factor in Equation (\ref{intro_eq1}) is equal to (see Theorem \ref{th_approximation2_nonrandom_oneboundary}) 
\begin{eqnarray*}
\esp_{\mathbb{Q}} \big[\mathbf{1}_{\{\Tau^Y_b \leq T \}}  \exp{\big(-\widetilde{\alpha} \widetilde{W}\big)} | W_T \big] & = & \exp{\big(-\frac{2b(b-Y_T)}{T}\big)} \\ &&\times\exp{\big(\widetilde{\alpha} \int_0^T \widetilde{\theta}_s \theta_s ds\big)} \mathcal{L}_N(\widetilde{\alpha}).
\end{eqnarray*}
Here, $\mathcal{L}_N$ is the Laplace transform of a standard normal variable.  
This is based on the preliminary result (see Lemma \ref{prop_condproba_nonrandom_oneboundary1}) $$\mathbb{Q} (\Tau^Y_b \leq T | W_T)  =   \exp{\big(-\frac{2b(b-Y_T)}{T}\big)},$$ 
by using the explicit formula from Malmquist (\citeyear{malmquist1954certain}) (Theorem 1, p. 526) and since $Y$ is a standard Wiener process under $\mathbb{Q}$. 

 Finally, we give formulae of $P_b^Y(T)$ by integrating $\proba (\Tau^Y_b \leq T | W_T)$ with respect to the value of $W_T$ (see Corollary \ref{th_proba_nonrandom_oneboundary1}). We also derive similar formulae in the case (ii). If we define $v$ as $v=(g,\mu,\sigma)$ and we assume that $v$ is independent of $W$, the elementary idea in the case (ii) is to condition by both $W_T$ and $v$, i.e., to derive results of the form $\proba (\Tau^Y_b \leq T | W_T,v)$. We also derive similar formulae in the two-sided boundary case when the difference between the deviation of each boundary from their starting value is linear, i.e., there exists $\beta \in \reels$ such that $h_t - h_0 = g_t - g_0 + \beta t$. To apply the Girsanov theorem to the two-sided boundary case, we cannot use two different drifts since the process $Z$ is unique in the boundary crossing probability (\ref{def_bcp2}). Thus, the elementary idea consists in rewriting the FPT to a two-sided time-varying boundary as an equivalent FPT to a two-sided boundary, with one constant boundary and one linear boundary. More specifically, we define the new drift as $u_t=\frac{\mu_t-\mu_0-g_t+g_0}{\sigma}$, the new process as $Y_t = u_t + W_t$, the new constant boundary as $b=\frac{\mu_0-g_0}{\sigma}$ and the new linear boundary as $c_t=\frac{h_0-\mu_0+\beta t}{\sigma}$. We then observe that the boundary crossing probability (\ref{def_bcp2}) may be rewritten as $P_{g,h}^Z = P_{b,c}^Y$.

When the boundaries and the drift are linear, the one-sided and two-sided boundary crossing probability (\ref{def_bcp})-(\ref{def_bcp2}) can be obtained easily. For instance, one can use a combination of the Girsanov theorem and the reflection principle, or by calculating the Laplace
 transforms via some appropriate martingales and the optional sampling theorem. Details of both methods can be found in many classical textbooks on stochastic analysis, e.g., Karatzas et al. (\citeyear{karatzas2012brownian}) or Revuz et al. (\citeyear{revuz2013continuous}). More specifically, one can obtain by the Girsanov theorem Equation (\ref{intro_eq}), and since the Radon-Nikodym derivative inverse $M_T^{-1}$ is $\sigma(W_T)$-measurable in that simpler case, one can conclude by the joint distribution of the maximum and terminal value of a Wiener process based on the reflection principle. When the boundaries and the drift are not linear, however, $M_T^{-1}$ is no longer $\sigma(W_T)$-measurable. This renders a direct calculation not possible since that would require to extend the arguments based on the reflection principle. As discussed above, we circumvent that difficulty by an elementary decomposition of the Radon-Nikodym derivative inverse. This decomposition expresses it as the product of a $\sigma(W_T)$-measurable random variable and an independent random variable. The price to pay is that we do not obtain explicit formulae without strong theoretical assumptions. 

As it stands, our obtained formulae are based on a strong theoretical assumption. Unfortunately, such assumption is hard to show when the final time is fixed. The main reason is that the boundary is not linear enough. However, we conjecture that this assumption is asymptotically met if we divide the final time interval $[0,T]$ into smaller time intervals with length that converges to 0 asymptotically. This is due to the fact that the boundary gets more and more linear locally. Thus, we can relatively safely apply our obtained formulae locally. By taking the sum of the local approximations, we can then approximate the boundary crossing probabilities (\ref{def_bcp})-(\ref{def_bcp2}).

\section{Main results}
\label{sec_mainresultsonesided}
\subsection{One-sided time-varying boundary case}
In this section, we consider the case when the one-sided boundary and the drift are nonrandom time-varying and the variance is nonrandom. 

We consider the complete stochastic basis $\mathcal{B} = (\Omega, \proba, \mathcal{F}, \mathbf{F})$, where $\mathcal{F}$ is a $\sigma$-field and $\mathbf{F} = (\mathcal{F}_t)_{t \in \reels^+}$ is a filtration. We define the set of continuous functions from $\reels^+$ to $\reels$ as $\mathcal{C}(\reels^+,\reels)$. We first give the definition of the set of boundary functions. We assume that the boundary is continuous since it is required in the assumptions for the Girsanov theorem.
\begin{definition}\label{defboundaryset}
We define the set of boundary functions as $\mathcal{G} = \mathcal{C}(\reels^+,\reels).$
\end{definition}
 We now give the definition of the FPT. We assume that the stochastic process is continuous since we consider a Wiener process with a continuous drift which is required in the assumptions for the Girsanov theorem.
\begin{definition}\label{defFPT}
We define the FPT of an $\mathbf{F}$-adapted continuous process $Z$ to a boundary
$g \in \mathcal{G}$ satisfying
$Z_0 \leq g_0$ as
\begin{eqnarray}
\label{TgZdef}
\Tau_g^Z = \inf \{t \in \reels^+ \text{ s.t. } Z_t \geq g_t\}.
\end{eqnarray}
\end{definition}
 We have that $Z$ is a continuous and 
$\mathbf{F}$-adapted stochastic process and $\inf \{t \in \reels^+ \text{ s.t. } Z_t \geq g_t\} = \inf \{t \in \reels^+ \text{ s.t. } (t,Z_t) \in G\}$, where $G= \{(t,u) \in \reels^+ \times \reels \text{ s.t. } u \geq g_t\}$ is a closed subset of $\reels^2$. Thus, the FPT $\Tau_{g}^{Z}$ is an $\mathbf{F}$-stopping time by Theorem I.1.27 (p. 7) in Jacod et al. (\citeyear{JacodLimit2003}). We can rewrite the boundary crossing probability $P_g^Z$ as the cdf of $\Tau_g^Z$, i.e., 
\begin{eqnarray}
\label{PgZdef}
P_g^Z(t)= \proba (\Tau^Z_g \leq t) \text{ for any } t \geq 0.
\end{eqnarray}
We assume that $W$ is an $\mathbf{F}$-standard Wiener process. We assume that $Z_t = \mu_t + \sigma W_t$. Here, $\mu$ is time-varying, nonrandom, and satisfies $\mu_0 < g_0$. The variance $\sigma^2$ is time-invariant, nonrandom, and positive, i.e., $\sigma^2 > 0$. To apply the Girsanov theorem, the main elementary idea is to rewrite the FPT to a time-varying boundary as an equivalent FPT to a constant boundary. More specifically, we define the new nonrandom drift as $u_t=\frac{\mu_t-\mu_0-g_t+g_0}{\sigma}$, the new process as $Y_t = u_t + W_t$ and the new constant boundary as $b=\frac{g_0-\mu_0}{\sigma}$. We then observe that the FPT (\ref{TgZdef}) may be rewritten as 
\begin{eqnarray}
\Tau_g^Z = \Tau_{b}^Y.
\end{eqnarray}
Then, we will consider an equivalent probability measure under which the new process $Y$ will be a standard Wiener process. Accordingly, we provide the assumption which corresponds to Novikov's condition (see Novikov (\citeyear{novikov1972identity})) which is required to apply the Girsanov theorem (see Girsanov (\citeyear{girsanov1960transforming})). The proofs of this paper would hold with no change with the more general conditions obtained by Kazamaki (\citeyear{kazamaki1977problem}).
\begin{proof}[\textbf{Assumption A}]
We assume that $u \neq 0$, i.e., that there exists a $t \in [0,T]$ such that $u_t \neq 0$. We also assume that $u$ is absolutely continuous on $[0,T]$, i.e., there exists a nonrandom function $\theta : [0,T] \rightarrow \reels$ with $u_t=\int_0^t \theta_s ds$. Finally, we assume that $\int_0^T \theta_s^2 ds < \infty$.
\phantom\qedhere
\end{proof}
\begin{definition}
We define $M$ as
\begin{eqnarray}
\label{defM}
M_t=\exp{\big(\int_0^t \theta_s dW_s - \frac{1}{2} \int_0^t \theta_s^2 ds\big)} \text{ for any } 0 \leq t \leq T.
\end{eqnarray}
\end{definition}
 By \textbf{Assumption A}, $M$ satisfies Novikov's condition and thus is a positive martingale. We embed this result and its implications on an equivalent probability measure $\mathbb{Q}$ by the Girsanov theorem in the following lemma.
\begin{lemma} \label{lemma_girsanov} Under \textbf{Assumption A}, we have that $M$ is a positive martingale. Thus, we can consider an equivalent probability measure $\mathbb{Q}$ such that the Radon-Nikodym derivative is defined as $\frac{d \mathbb{Q}}{d \proba}=M_T$. Finally, $Y$ is a standard Wiener process under $\mathbb{Q}$. \end{lemma}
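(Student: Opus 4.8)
The plan is to verify the hypotheses of the Girsanov theorem and then invoke it; the only step that requires genuine attention is Novikov's condition, and the key observation is that it is automatic here because $\theta$ is nonrandom. Novikov's condition requires $\esp_{\proba}\big[\exp\big(\frac{1}{2}\int_0^T \theta_s^2\,ds\big)\big] < \infty$, whereas \textbf{Assumption A} only supplies the weaker bound $\int_0^T \theta_s^2\,ds < \infty$. Since $\theta : [0,T] \to \reels$ is a deterministic function, however, the quantity $\frac{1}{2}\int_0^T \theta_s^2\,ds$ is a finite nonrandom constant, so that $\esp_{\proba}\big[\exp\big(\frac{1}{2}\int_0^T \theta_s^2\,ds\big)\big] = \exp\big(\frac{1}{2}\int_0^T \theta_s^2\,ds\big) < \infty$. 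Thus Novikov's condition holds trivially, and this is precisely where the nonrandomness of the drift in case (i) enters.

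With Novikov's condition in hand, I would next record that the stochastic exponential $M$ in \eqref{defM} is a genuine $(\mathbf{F},\proba)$-martingale, by the standard criterion (see, e.g., Karatzas and Shreve or Revuz and Yor). Positivity of $M_t$ is immediate from its exponential form, and the martingale property together with $M_0 = 1$ gives $\esp_{\proba}[M_T] = 1$, so $M_T$ is a bona fide probability density relative to $\proba$.

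I would then define $\mathbb{Q}$ by $\frac{d\mathbb{Q}}{d\proba} = M_T$. Because $M_T > 0$ holds $\proba$-almost surely, the measures $\proba$ and $\mathbb{Q}$ share the same null sets and are therefore mutually equivalent; this equivalence is what will later permit conditional expectations to be transferred between the two measures. Finally, applying the Girsanov theorem with the nonrandom integrand $\theta$ shows that correcting $W$ by the drift $u_t = \int_0^t \theta_s\,ds$ produces a standard $\mathbf{F}$-Wiener process under $\mathbb{Q}$, and by construction this process is $Y_t = u_t + W_t$.

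Since the statement is essentially a repackaging of classical results, there is no deep difficulty; the one place where care is genuinely needed is the Novikov step, where one must not be misled by the fact that \textbf{Assumption A} states only the square-integrability of $\theta$---the exponential moment is finite solely because $\theta$ is deterministic. A secondary point to keep straight is the direction of the Girsanov drift correction, namely that the sign conventions fixed in \eqref{defM} and in the definition $Y_t = u_t + W_t$ are matched, so that it is exactly $Y$ that is driftless under $\mathbb{Q}$.
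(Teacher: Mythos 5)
Your proposal takes essentially the same route as the paper, which likewise treats the lemma as an immediate consequence of Novikov's condition plus the Girsanov theorem; your observation that the exponential moment is automatic because $\theta$ is deterministic (so that $\esp_{\proba}\big[\exp\big(\tfrac{1}{2}\int_0^T\theta_s^2\,ds\big)\big]=\exp\big(\tfrac{1}{2}\int_0^T\theta_s^2\,ds\big)<\infty$) is exactly the point the paper relies on. One caveat: the sign check you flag at the end does not come out the way you assert. With $M$ as defined in \eqref{defM}, the standard Girsanov theorem makes $W_t-u_t$, not $W_t+u_t=Y_t$, a standard Wiener process under $\mathbb{Q}$; for the stated conclusion one needs $M_t=\exp\big(-\int_0^t\theta_s\,dW_s-\tfrac{1}{2}\int_0^t\theta_s^2\,ds\big)$. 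This sign issue is inherited from the paper rather than introduced by you, but since you explicitly claim the conventions are matched, that step should be verified rather than asserted.
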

 Consequently, we obtain that $\esp_\proba \big[X \big]  =  \esp_{\mathbb{Q}} \big[X M_T^{-1}\big]$ for any $\mathcal{F}_T$-measurable random variable $X$ by a change of probability in the expectation. The next proposition reexpresses $\proba (\Tau^Y_b \leq T | W_T)$ under $\mathbb{Q}$ and is a result of interest in its own right although the obtained formulae are not explicit. The proof is based on Lemma \ref{lemma_girsanov} and its consequence in the particular case $X= \mathbf{1}_{\{\Tau^Y_b \leq T \}} \mathbf{1}_{E_T}$ where $E_T$ is a $\sigma(W_T)$-measurable event. We define $\overline{W}_t$ as
\begin{eqnarray}
\label{overlineWdef}
\overline{W}_t = \int_0^t \theta_s dW_s.
\end{eqnarray}
\begin{proposition}
\label{th_condproba_nonrandom_oneboundary}
Under \textbf{Assumption A}, we have
\begin{eqnarray}
\label{eq_condproba_nonrandom_oneboundary0}
\proba (\Tau^Y_b \leq T | W_T) & = & \esp_{\mathbb{Q}} \big[\mathbf{1}_{\{\Tau^Y_b \leq T \}} M_T^{-1} | W_T \big].
\end{eqnarray}
This can be reexpressed as
\begin{eqnarray}
\label{eq_condproba_nonrandom_oneboundary}
\proba (\Tau^Y_b \leq T | W_T) & = & \esp_{\mathbb{Q}} \big[ M_T^{-1} \esp_\mathbb{Q} \big[\mathbf{1}_{\{\Tau^Y_b \leq T \}} | W_T,\overline{W}_T \big] | W_T \big].
\end{eqnarray}
\end{proposition}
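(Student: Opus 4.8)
The plan is to establish the first identity by testing both sides against an arbitrary $\sigma(W_T)$-measurable event and invoking the consequence of Lemma~\ref{lemma_girsanov}, namely $\esp_\proba[X]=\esp_{\mathbb{Q}}[X M_T^{-1}]$ for every $\mathcal{F}_T$-measurable $X$. Write $A=\{\Tau^Y_b\le T\}$, which is $\mathcal{F}_T$-measurable since $\Tau^Y_b$ is an $\mathbf{F}$-stopping time, and set $\psi:=\esp_{\mathbb{Q}}[\mathbf{1}_A M_T^{-1}\mid W_T]$. Both $\proba(A\mid W_T)$ and $\psi$ are $\sigma(W_T)$-measurable, so it suffices to show that they integrate to the same value against $\mathbf{1}_{E_T}$ for every $E_T\in\sigma(W_T)$, and then let $E_T$ range over $\sigma(W_T)$.

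First I would apply the Girsanov consequence to $X=\mathbf{1}_A\mathbf{1}_{E_T}$. On the $\proba$-side, the tower property together with the $\sigma(W_T)$-measurability of $\mathbf{1}_{E_T}$ gives $\esp_\proba[\mathbf{1}_A\mathbf{1}_{E_T}]=\esp_\proba\big[\mathbf{1}_{E_T}\,\proba(A\mid W_T)\big]$. On the $\mathbb{Q}$-side, the same manipulation yields $\esp_{\mathbb{Q}}[\mathbf{1}_A\mathbf{1}_{E_T}M_T^{-1}]=\esp_{\mathbb{Q}}\big[\mathbf{1}_{E_T}\,\psi\big]$. Since $\esp_\proba[X]=\esp_{\mathbb{Q}}[XM_T^{-1}]$ forces these two quantities to coincide, I obtain $\esp_\proba\big[\mathbf{1}_{E_T}\,\proba(A\mid W_T)\big]=\esp_{\mathbb{Q}}\big[\mathbf{1}_{E_T}\,\psi\big]$ for all $E_T\in\sigma(W_T)$.

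The main obstacle is precisely the passage from this equality of integrals to equality of the integrands, because the two sides live under different measures: the left is a $\proba$-integral and the right a $\mathbb{Q}$-integral. To compare them I would bring the $\mathbb{Q}$-integral back to $\proba$ via $\esp_{\mathbb{Q}}[\,\cdot\,]=\esp_\proba[\,\cdot\,M_T]$, so that $\esp_{\mathbb{Q}}[\mathbf{1}_{E_T}\,\psi]=\esp_\proba[\mathbf{1}_{E_T}\,\psi\,M_T]=\esp_\proba\big[\mathbf{1}_{E_T}\,\psi\,\esp_\proba[M_T\mid W_T]\big]$, the last step using that $\mathbf{1}_{E_T}\psi$ is $\sigma(W_T)$-measurable. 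Matching the two $\proba$-integrals for all $E_T$ identifies $\proba(A\mid W_T)$ with $\psi\,\esp_\proba[M_T\mid W_T]$ rather than with $\psi$ directly. The delicate point is therefore the behaviour of the $\sigma(W_T)$-measurable normalization $\esp_\proba[M_T\mid W_T]$: because $M_T$ is $\sigma(\overline{W}_T)$-measurable with $\overline{W}_T=\int_0^T\theta_s\,dW_s$ correlated with $W_T$ whenever $\theta$ is non-constant, this factor is not identically one in general, and I would isolate its analysis as a separate measurability lemma before concluding that \eqref{eq_condproba_nonrandom_oneboundary0} holds in the stated form.

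For the reexpression \eqref{eq_condproba_nonrandom_oneboundary}, once \eqref{eq_condproba_nonrandom_oneboundary0} is available the remaining step is routine. I would apply the tower property under $\mathbb{Q}$ with the intermediate $\sigma$-field $\sigma(W_T,\overline{W}_T)\supseteq\sigma(W_T)$, writing $\esp_{\mathbb{Q}}[\mathbf{1}_A M_T^{-1}\mid W_T]=\esp_{\mathbb{Q}}\big[\esp_{\mathbb{Q}}[\mathbf{1}_A M_T^{-1}\mid W_T,\overline{W}_T]\mid W_T\big]$. Since $M_T=\exp(\overline{W}_T-\tfrac12\int_0^T\theta_s^2\,ds)$ is $\sigma(\overline{W}_T)$-measurable, hence measurable with respect to $\sigma(W_T,\overline{W}_T)$, it factors out of the inner conditional expectation, giving $\esp_{\mathbb{Q}}\big[M_T^{-1}\,\esp_{\mathbb{Q}}[\mathbf{1}_A\mid W_T,\overline{W}_T]\mid W_T\big]$, which is exactly \eqref{eq_condproba_nonrandom_oneboundary}.
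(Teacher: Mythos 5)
Your strategy coincides with the paper's: both test the claimed identity against an arbitrary $\sigma(W_T)$-measurable event $E_T$ and invoke the change-of-measure identity $\esp_\proba[\mathbf{1}_A\mathbf{1}_{E_T}]=\esp_{\mathbb{Q}}[\mathbf{1}_A M_T^{-1}\mathbf{1}_{E_T}]$ with $A=\{\Tau^Y_b\le T\}$. The difference is that you stop where the paper does not: the paper deduces (\ref{eq_condexp_nonrandom_oneboundary0}) from (\ref{eq_condexp_nonrandom_oneboundary00}) ``by definition of the conditional expectation,'' whereas you observe, correctly, that the two sides are integrals under different measures, so the defining property of conditional expectation identifies the integrands only after both are referred to the same measure. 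Your computation $\esp_{\mathbb{Q}}[\mathbf{1}_{E_T}\psi]=\esp_\proba\big[\mathbf{1}_{E_T}\,\psi\,\esp_\proba[M_T\mid W_T]\big]$ is right and yields $\proba(A\mid W_T)=\psi\,\esp_\proba[M_T\mid W_T]$, i.e.\ the abstract Bayes formula $\proba(A\mid W_T)=\esp_{\mathbb{Q}}[\mathbf{1}_A M_T^{-1}\mid W_T]\big/\esp_{\mathbb{Q}}[M_T^{-1}\mid W_T]$. The normalization you isolate is not innocuous: using the decomposition $\overline{W}_T=\alpha W_T+\widetilde{\alpha}\widetilde{W}$ of Lemma \ref{lemma_representation}, one finds $\esp_\proba[M_T\mid W_T]=\exp\big(\alpha W_T-\tfrac{1}{2}\alpha^2 T\big)$, which equals $1$ only when $\alpha=0$, that is, when $u_T=\int_0^T\theta_s\,ds=0$; equivalently, this factor is the Radon--Nikodym derivative of the law of $W_T$ under $\mathbb{Q}$ with respect to its law under $\proba$, and these two Gaussian laws differ whenever $u_T\neq 0$. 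Consequently the ``separate measurability lemma'' you defer cannot show that the factor is identically $1$, and your argument, carried out honestly, establishes a corrected identity rather than (\ref{eq_condproba_nonrandom_oneboundary0}). The gap is in the paper's own proof rather than something you introduced, but as a proof of the stated proposition your proposal is incomplete: you should state the identity you can actually reach, together with the condition ($u_T=0$, equivalently $\alpha=0$) under which it reduces to the stated one.

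Your treatment of the second display is fine and in fact cleaner than the paper's: given (\ref{eq_condproba_nonrandom_oneboundary0}), the tower property under $\mathbb{Q}$ over $\sigma(W_T,\overline{W}_T)\supseteq\sigma(W_T)$, together with the $\sigma(\overline{W}_T)$-measurability of $M_T$, shows directly that the right-hand sides of (\ref{eq_condproba_nonrandom_oneboundary0}) and (\ref{eq_condproba_nonrandom_oneboundary}) coincide; this equivalence holds regardless of the issue above, whereas the paper's second round of test events repeats the same cross-measure identification.
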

 It remains to calculate $\esp_{\mathbb{Q}} \big[\mathbf{1}_{\{\Tau^Y_b \leq T \}} M_T^{-1} | W_T \big]$ or $$\esp_{\mathbb{Q}} \big[ M_T^{-1} \esp_\mathbb{Q} \big[\mathbf{1}_{\{\Tau^Y_b \leq T \}} | W_T,\overline{W}_T \big] | W_T \big]$$ 
if we want to derive a completely explicit formula. Although $Y$ is a standard Wiener process under $\mathbb{Q}$ by Lemma \ref{lemma_girsanov}, the presence of $M_T^{-1}$ or $\overline{W}_T$ in the conditional expectation renders a direct calculation not possible. Indeed, we would need to extend the arguments based on the reflection principle. We circumvent that difficulty by an elementary decomposition of the Radon-Nikodym derivative inverse $M_T^{-1}$. More specifically, the decomposition is based on the fact that $(W_T,\overline{W}_T)$ is a centered normal random vector under $\proba$. It consists of rewriting $\overline{W}_T$ as $\overline{W}_T = \alpha W_T + \widetilde{\alpha} \widetilde{W}$, where $\widetilde{W}$ is a standard normal random variable under $\proba$, independent of $W_T$. We define the correlation under $\proba$ between $W_T$ and $\overline{W}_T$ as $\rho$, i.e., $\rho = \operatorname{Cor}_\proba(W_T, \overline{W}_T)$. 
\begin{lemma} \label{lemma_representation} Under \textbf{Assumption A}, we have that $\frac{\overline{W}_T}{\sqrt{\int_0^T \theta_s^2 ds}}$ is a standard normal random variable under $\proba$. We can also show that $\rho  = \frac{\int_0^T \theta_s ds}{T\int_0^T \theta_s^2 ds}$. Moreover, there exists a standard normal random variable $\widetilde{W}$ under $\proba$ which is independent of $W_T$, and such that $\overline{W}_T$ when normalized can be reexpressed as
\begin{eqnarray}
\label{def_Wtilde}
\frac{\overline{W}_T}{\sqrt{\int_0^T \theta_s^2 ds}} = \rho \frac{W_T}{\sqrt{T}} + \sqrt{1-\rho^2} \widetilde{W},
\end{eqnarray}
This can be reexpressed as
\begin{eqnarray}
\label{def_Wtilde0}
\overline{W}_T = \alpha W_T + \widetilde{\alpha} \widetilde{W},
\end{eqnarray}
where $\alpha = \rho\sqrt{T^{-1}\int_0^T \theta_s^2 ds}$ and $\widetilde{\alpha} = \sqrt{(1- \rho^2) \int_0^T \theta_s^2 ds}$. If we define $\widetilde{\theta}_t = \frac{\theta_s - \alpha}{\widetilde{\alpha}}$, we can reexpress $\widetilde{W}$ as
\begin{eqnarray}
\label{def_Wtilde1}
\widetilde{W} & = & \int_0^T \widetilde{\theta}_s dW_s.
\end{eqnarray}
Finally, $\widetilde{W}+\int_0^T \widetilde{\theta}_s \theta_s ds$ is a standard normal variable under $\mathbb{Q}$.
\end{lemma}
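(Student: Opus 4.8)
The plan is to treat every object in sight as a Wiener integral of a deterministic integrand against the single Brownian motion $W$, so that the whole lemma reduces to linear algebra for jointly Gaussian variables together with the It\^{o} isometry. I would first record that $W_T=\int_0^T 1\,dW_s$ and $\overline{W}_T=\int_0^T\theta_s\,dW_s$, so that $(W_T,\overline{W}_T)$ is a centered Gaussian vector under $\proba$ with $\Var_\proba(W_T)=T$, $\Var_\proba(\overline{W}_T)=\int_0^T\theta_s^2\,ds$, and $\Cov_\proba(W_T,\overline{W}_T)=\int_0^T\theta_s\,ds=u_T$, all three immediate from the It\^{o} isometry (the cross term uses the bilinear form $\langle f,g\rangle=\int_0^T f_sg_s\,ds$). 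The first claim, that $\overline{W}_T/\sqrt{\int_0^T\theta_s^2\,ds}$ is standard normal, is then just the normalization of $\overline{W}_T$, and the correlation $\rho=\Cov_\proba(W_T,\overline{W}_T)/\sqrt{\Var_\proba(W_T)\,\Var_\proba(\overline{W}_T)}$ follows by dividing the covariance by the two standard deviations.

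For the decomposition I would use the Gaussian projection of $\overline{W}_T$ onto $W_T$. Set $\alpha=\Cov_\proba(\overline{W}_T,W_T)/\Var_\proba(W_T)=u_T/T$ and let $R=\overline{W}_T-\alpha W_T$ be the residual. A one-line computation gives $\Cov_\proba(R,W_T)=0$, and since $(R,W_T)$ is jointly Gaussian this upgrades to independence; defining $\widetilde{W}=R/\widetilde\alpha$ with $\widetilde\alpha$ chosen so that $\Var_\proba(\widetilde{W})=1$ produces a standard normal variable independent of $W_T$, which is exactly \eqref{def_Wtilde0}. Matching this against the stated constants is bookkeeping: substituting $\rho$ shows $\alpha=\rho\sqrt{T^{-1}\int_0^T\theta_s^2\,ds}$, while $\widetilde\alpha^2=\Var_\proba(R)=\int_0^T\theta_s^2\,ds-u_T^2/T=(1-\rho^2)\int_0^T\theta_s^2\,ds$, which recovers \eqref{def_Wtilde}. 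Rewriting $R=\overline{W}_T-\alpha W_T=\int_0^T(\theta_s-\alpha)\,dW_s$ and dividing by $\widetilde\alpha$ yields $\widetilde{W}=\int_0^T\widetilde\theta_s\,dW_s$, and the identity $\int_0^T\widetilde\theta_s^2\,ds=\widetilde\alpha^{-2}\big(\int_0^T\theta_s^2\,ds-2\alpha u_T+\alpha^2T\big)=1$ simultaneously confirms that $\widetilde{W}$ is standard normal and that the normalization is consistent.

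The last statement is the only place where the change of measure enters. Under $\mathbb{Q}$ the process $Y_t=u_t+W_t$ is a standard Wiener process by Lemma \ref{lemma_girsanov}, so $dW_s=dY_s-\theta_s\,ds$. Substituting this into $\widetilde{W}=\int_0^T\widetilde\theta_s\,dW_s$ gives $\widetilde{W}=\int_0^T\widetilde\theta_s\,dY_s-\int_0^T\widetilde\theta_s\theta_s\,ds$, hence $\widetilde{W}+\int_0^T\widetilde\theta_s\theta_s\,ds=\int_0^T\widetilde\theta_s\,dY_s$, which is a Wiener integral of the deterministic integrand $\widetilde\theta$ against the $\mathbb{Q}$-Brownian motion $Y$ and is therefore centered Gaussian under $\mathbb{Q}$ with variance $\int_0^T\widetilde\theta_s^2\,ds=1$; that is, standard normal under $\mathbb{Q}$.

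I expect no serious obstacle here, since the content is essentially the Gaussian projection identity plus the It\^{o} isometry. The two spots that demand care are the verification $\int_0^T\widetilde\theta_s^2\,ds=1$, which hinges on using $\alpha=u_T/T$ rather than a generic constant, and, for the final claim, getting the sign in the Girsanov substitution $dW_s=dY_s-\theta_s\,ds$ right so that the drift term $\int_0^T\widetilde\theta_s\theta_s\,ds$ is absorbed with the correct sign.
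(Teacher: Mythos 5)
Your proof is correct and follows essentially the same route as the paper: joint Gaussianity of $(W_T,\overline{W}_T)$ plus the It\^{o} isometry for the decomposition, and Lemma \ref{lemma_girsanov} for the final claim under $\mathbb{Q}$; yours is in fact the more careful write-up, since you construct $\widetilde{W}$ explicitly as the normalized projection residual and verify $\int_0^T\widetilde{\theta}_s^2\,ds=1$, where the paper merely invokes the existence of $\widetilde{W}$ from the bivariate normal structure. The one point of divergence is the correlation: your $\rho=u_T/\sqrt{T\int_0^T\theta_s^2\,ds}$ (covariance over the product of \emph{standard deviations}) is the value consistent with the decomposition (\ref{def_Wtilde}) and with $\alpha=u_T/T$, whereas the paper's proof divides the covariance by the product of \emph{variances} and states $\rho=\int_0^T\theta_s\,ds/(T\int_0^T\theta_s^2\,ds)$, which appears to be a normalization slip in the paper rather than a gap in your argument.
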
 
 Our main result is the next theorem, which gives a more explicit formula to $\proba (\Tau^Y_b \leq T | W_T)$. The proof is based on Lemma \ref{lemma_representation}. 

\begin{theorem}
\label{th_approximation_nonrandom_oneboundary}
Under \textbf{Assumption A},
we have
\begin{eqnarray}
\label{eq_nonapproximation_nonrandom_oneboundary}
&&\proba (\Tau^Y_b \leq T | W_T) \\ \nonumber  & = & \exp{\big(-\alpha W_T + \frac{1}{2} \int_0^T \theta_s^2 ds\big)} \esp_{\mathbb{Q}} \big[\mathbf{1}_{\{\Tau^Y_b \leq T \}}  \exp{\big(-\widetilde{\alpha} \widetilde{W}\big)} | W_T \big].
\end{eqnarray}
\end{theorem}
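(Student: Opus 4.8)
The plan is to substitute the additive decomposition of $\overline{W}_T$ supplied by Lemma~\ref{lemma_representation} into the Radon--Nikodym derivative inverse $M_T^{-1}$ that appears in Proposition~\ref{th_condproba_nonrandom_oneboundary}, and then to pull the resulting $\sigma(W_T)$-measurable factor out of the $\mathbb{Q}$-conditional expectation. Since all the analytic work has already been front-loaded into the linear representation of $\overline{W}_T$, the argument reduces to an algebraic rewriting followed by a measurability observation, and in particular requires no extension of the reflection-principle arguments alluded to in the discussion after Proposition~\ref{th_condproba_nonrandom_oneboundary}.

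First I would start from \eqref{eq_condproba_nonrandom_oneboundary0}, that is, from the identity $\proba(\Tau^Y_b \leq T \mid W_T) = \esp_{\mathbb{Q}}[\mathbf{1}_{\{\Tau^Y_b \leq T\}} M_T^{-1} \mid W_T]$. By the definition of $M$ in \eqref{defM} together with \eqref{overlineWdef}, the inverse of the Radon--Nikodym derivative is
\begin{equation*}
M_T^{-1} = \exp\Big(-\overline{W}_T + \tfrac{1}{2}\int_0^T \theta_s^2\, ds\Big).
\end{equation*}
I would then invoke the representation \eqref{def_Wtilde0} from Lemma~\ref{lemma_representation}, namely $\overline{W}_T = \alpha W_T + \widetilde{\alpha}\widetilde{W}$, and substitute it to obtain the factorization
\begin{equation*}
M_T^{-1} = \exp\Big(-\alpha W_T + \tfrac{1}{2}\int_0^T \theta_s^2\, ds\Big)\,\exp\big(-\widetilde{\alpha}\widetilde{W}\big).
\end{equation*}

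Finally, I would note that the first factor is a deterministic function of $W_T$, since $\alpha$ and $\int_0^T \theta_s^2\, ds$ are nonrandom constants under \textbf{Assumption A}; hence it is $\sigma(W_T)$-measurable and pulls out of $\esp_{\mathbb{Q}}[\,\cdot \mid W_T]$. What remains inside is precisely $\esp_{\mathbb{Q}}[\mathbf{1}_{\{\Tau^Y_b \leq T\}}\exp(-\widetilde{\alpha}\widetilde{W}) \mid W_T]$, which yields exactly the right-hand side of \eqref{eq_nonapproximation_nonrandom_oneboundary}.

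The one point that needs care — and the main, though mild, obstacle — is that \eqref{def_Wtilde0} must be used as a genuine almost-sure identity of random variables rather than as a mere equality in distribution, because it is being inserted inside a conditional expectation. This is justified by the explicit construction of $\widetilde{W}$ as the Wiener integral $\int_0^T \widetilde{\theta}_s\, dW_s$ in \eqref{def_Wtilde1}, which makes $\overline{W}_T = \alpha W_T + \widetilde{\alpha}\widetilde{W}$ hold $\proba$-almost surely; since $\proba$ and $\mathbb{Q}$ are equivalent by Lemma~\ref{lemma_girsanov}, the identity persists $\mathbb{Q}$-almost surely and may legitimately be substituted under $\esp_{\mathbb{Q}}[\,\cdot \mid W_T]$. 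Once this is acknowledged, the remaining steps are routine.
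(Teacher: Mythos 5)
Your proposal is correct and follows essentially the same route as the paper's own proof: both start from Eq.\ (\ref{eq_condproba_nonrandom_oneboundary0}), factor $M_T^{-1}$ via the representation $\overline{W}_T = \alpha W_T + \widetilde{\alpha}\widetilde{W}$ of Lemma \ref{lemma_representation}, and pull the $\sigma(W_T)$-measurable factor out of the conditional expectation. Your closing remark that \eqref{def_Wtilde0} must hold as an almost-sure identity (via \eqref{def_Wtilde1}) rather than merely in distribution is a point the paper leaves implicit, but it does not change the argument.
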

 We first calculate $\mathbb{Q} (\Tau^Y_b \leq T | W_T)$, whose explicit formula is given in the following lemma. This reexpresses Malmquist (\citeyear{malmquist1954certain}) (Theorem 1, p. 526) under $\mathbb{Q}$, who considers the linear case $\mu_t=0$, $g_t= a t + b$ and $\sigma = 1$ under $\proba$. He obtains that
\begin{eqnarray}
& & \proba (\Tau^Z_g \leq T | W_T = x) \\ \nonumber &=& \exp{\Big(-\frac{2b(aT+b-x)}{T}\Big)}\mathbf{1}_{\{x \leq aT+b\}} + \mathbf{1}_{\{x > aT+b\}}
\end{eqnarray}
for any $x \in \reels$.
\begin{lemma}
\label{prop_condproba_nonrandom_oneboundary1}
Under \textbf{Assumption A}, we have
\begin{eqnarray}
\label{propeq_condproba_nonrandom_oneboundary1}
\mathbb{Q} (\Tau^Y_b \leq T | W_T) & = &  \exp{\Big(-\frac{2b(b-Y_T)}{T}\Big)}\mathbf{1}_{\{Y_T \leq b\}} + \mathbf{1}_{\{Y_T > b\}}.
\end{eqnarray}
\end{lemma}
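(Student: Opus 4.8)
The plan is to reduce the computation entirely to the linear case already solved by Malmquist, using the fact that under $\mathbb{Q}$ the process $Y$ is genuinely a standard Wiener process. First I would invoke Lemma \ref{lemma_girsanov}: under $\mathbb{Q}$, the process $Y$ is a standard Wiener process, and it starts at $Y_0 = u_0 + W_0 = 0$. Next I would recall from the construction preceding \textbf{Assumption A} that the boundary for $Y$ is the \emph{constant} level $b = \frac{g_0 - \mu_0}{\sigma}$, so that
\begin{eqnarray*}
\Tau^Y_b = \inf\{t \in \reels^+ \text{ s.t. } Y_t \geq b\}
\end{eqnarray*}
is precisely the first-passage time of a standard $\mathbb{Q}$-Wiener process to a fixed horizontal barrier, and moreover $b > 0$ because $\mu_0 < g_0$ and $\sigma > 0$, so that the starting value $Y_0 = 0$ lies strictly below $b$. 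This puts us exactly in the configuration of Malmquist's theorem.

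The one point that needs care is that the conditioning in the statement is on $W_T$, whereas the natural conditioning variable for the $\mathbb{Q}$-Wiener process $Y$ is its own terminal value $Y_T$. I would resolve this by observing that $Y_T = u_T + W_T$ and that $u_T = \int_0^T \theta_s \, ds$ is a \emph{nonrandom} constant under \textbf{Assumption A}. Hence $\sigma(W_T) = \sigma(Y_T)$, and therefore
\begin{eqnarray*}
\mathbb{Q}(\Tau^Y_b \leq T \mid W_T) = \mathbb{Q}(\Tau^Y_b \leq T \mid Y_T).
\end{eqnarray*}
This is the step that lets me replace the conditioning variable of the ambient model by the terminal value of the transformed process.

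Finally, I would apply the explicit formula of Malmquist (Theorem 1, p.\ 526), quoted just before the lemma, in the specialization $a = 0$, $\sigma = 1$, with the standard Wiener process now taken to be $Y$ under $\mathbb{Q}$ and the boundary the constant $b$. Substituting $a = 0$ and writing the terminal value as $Y_T$ in place of the generic $x$ yields
\begin{eqnarray*}
\mathbb{Q}(\Tau^Y_b \leq T \mid Y_T) = \exp\Big(-\frac{2b(b - Y_T)}{T}\Big)\mathbf{1}_{\{Y_T \leq b\}} + \mathbf{1}_{\{Y_T > b\}},
\end{eqnarray*}
which is the desired identity after the conditioning equivalence above. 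Note the two cases are consistent with intuition: on $\{Y_T > b\}$ the continuous path started below $b$ must already have crossed it, forcing probability one, while on $\{Y_T \leq b\}$ the exponent is nonpositive since $b - Y_T \geq 0$.

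The main obstacle here is not any heavy computation but the correct bookkeeping of the change of setting: I must make sure that Malmquist's result, stated under $\proba$ for the driving Wiener process $W$ with a linear boundary, is transported verbatim to $\mathbb{Q}$ by the sole fact that $Y$ has the law of a standard Wiener process there, and that the deterministic shift $u_T$ makes conditioning on $W_T$ and on $Y_T$ interchangeable. If one preferred a self-contained argument in place of the citation, the substantive work would instead be the reflection-principle computation of the joint law of $(\sup_{0 \le t \le T} Y_t, Y_T)$ under $\mathbb{Q}$ together with the Brownian-bridge conditional density; but under \textbf{Assumption A} and Lemma \ref{lemma_girsanov} this is exactly what Malmquist's formula supplies.
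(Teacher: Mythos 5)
Your proposal is correct and follows essentially the same route as the paper's proof: invoke Lemma \ref{lemma_girsanov} so that $Y$ is a standard Wiener process under $\mathbb{Q}$, then apply Malmquist's formula with the constant boundary $b$ (the linear case with $a=0$). Your explicit observation that $\sigma(W_T)=\sigma(Y_T)$ because $u_T$ is nonrandom is a detail the paper leaves implicit, but it is the same argument.
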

 The next theorem gives an explicit formula based on a strong theoretical assumption (\ref{approximation_assumption}). The proof is based on Lemma \ref{prop_condproba_nonrandom_oneboundary1}. Let $N$ be a standard normal random variable under $\proba$. We define the Laplace transform of $N$ as 
\begin{eqnarray}
\label{def_laplace_transform}
\mathcal{L}_N(u) = \esp_{\proba} \big[  \exp{\big(-u N\big)} \big].
\end{eqnarray}

\begin{theorem}
\label{th_approximation2_nonrandom_oneboundary}
We assume that \textbf{Assumption A} and the following assumption
\begin{eqnarray}
\label{approximation_assumption}
\esp_{\mathbb{Q}} \big[\mathbf{1}_{\{\Tau^Y_b \leq T \}}  \exp{\big(-\widetilde{\alpha} \widetilde{W}\big)} | W_T \big] \\ \nonumber = \esp_{\mathbb{Q}} \big[\mathbf{1}_{\{\Tau^Y_b \leq T \}}  | W_T \big] \esp_{\mathbb{Q}} \big[  \exp{\big(-\widetilde{\alpha} \widetilde{W}\big)} | W_T \big]
\end{eqnarray}
holds. Then, we have
\begin{eqnarray}
\nonumber
\proba (\Tau^Y_b \leq T | W_T) & = & \exp{\big(-\alpha W_T + \frac{1}{2} \int_0^T \theta_s^2 ds\big)} \\ \nonumber & & \times \Big(\exp{\Big(-\frac{2b(b-Y_T)}{T}\Big)}\mathbf{1}_{\{Y_T \leq b\}} + \mathbf{1}_{\{Y_T > b\}}\Big) \\ \label{eq_approximation_nonrandom_oneboundary} & &  \times \exp{\big(\widetilde{\alpha} \int_0^T \widetilde{\theta}_s \theta_s ds\big)} \mathcal{L}_N(\widetilde{\alpha}).
\end{eqnarray}
\end{theorem}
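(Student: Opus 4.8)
The plan is to substitute the factorization hypothesis (\ref{approximation_assumption}) directly into the formula of Theorem \ref{th_approximation_nonrandom_oneboundary} and then evaluate the two resulting conditional expectations separately. Starting from the right-hand side of (\ref{eq_nonapproximation_nonrandom_oneboundary}) and applying (\ref{approximation_assumption}), the factor $\esp_{\mathbb{Q}}[\mathbf{1}_{\{\Tau^Y_b \leq T\}}\exp(-\widetilde{\alpha}\widetilde{W})\mid W_T]$ splits into the product of $\esp_{\mathbb{Q}}[\mathbf{1}_{\{\Tau^Y_b \leq T\}}\mid W_T]$ and $\esp_{\mathbb{Q}}[\exp(-\widetilde{\alpha}\widetilde{W})\mid W_T]$. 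The first of these is $\mathbb{Q}(\Tau^Y_b \leq T \mid W_T)$, whose closed form is supplied by Lemma \ref{prop_condproba_nonrandom_oneboundary1}; here I would note that $Y_T = W_T + u_T$ is a deterministic shift of $W_T$, so conditioning on $W_T$ coincides with conditioning on $Y_T$ and the lemma applies verbatim. This reproduces the middle factor $\exp(-2b(b-Y_T)/T)\mathbf{1}_{\{Y_T \leq b\}} + \mathbf{1}_{\{Y_T > b\}}$ of (\ref{eq_approximation_nonrandom_oneboundary}).

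It then remains to show $\esp_{\mathbb{Q}}[\exp(-\widetilde{\alpha}\widetilde{W})\mid W_T] = \exp(\widetilde{\alpha}\int_0^T\widetilde{\theta}_s\theta_s\,ds)\,\mathcal{L}_N(\widetilde{\alpha})$. I would introduce $\widehat{W} := \widetilde{W} + \int_0^T\widetilde{\theta}_s\theta_s\,ds$, which by the last assertion of Lemma \ref{lemma_representation} is a standard normal variable under $\mathbb{Q}$. Since the shift $\int_0^T\widetilde{\theta}_s\theta_s\,ds$ is a nonrandom constant, one has $\exp(-\widetilde{\alpha}\widetilde{W}) = \exp(\widetilde{\alpha}\int_0^T\widetilde{\theta}_s\theta_s\,ds)\exp(-\widetilde{\alpha}\widehat{W})$, so the deterministic factor pulls out of the conditional expectation and the task reduces to evaluating $\esp_{\mathbb{Q}}[\exp(-\widetilde{\alpha}\widehat{W})\mid W_T]$. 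If $\widehat{W}$ is independent of $W_T$ under $\mathbb{Q}$, this conditional expectation collapses to the unconditional one $\esp_{\mathbb{Q}}[\exp(-\widetilde{\alpha}\widehat{W})]$, which equals $\mathcal{L}_N(\widetilde{\alpha})$ by the definition (\ref{def_laplace_transform}) since $\widehat{W}$ is standard normal under $\mathbb{Q}$.

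The crux, and what I expect to be the main obstacle, is establishing that $\widehat{W}$ is independent of $W_T$ under $\mathbb{Q}$: normality of $\widehat{W}$ under $\mathbb{Q}$ alone does not suffice, and one must verify that the change of measure does not create correlation with $W_T$. I would argue this in the $Y$-representation, where under $\mathbb{Q}$ the process $Y$ is a standard Wiener process, $W_T = Y_T - u_T$, and $\widehat{W} = \int_0^T\widetilde{\theta}_s\,dW_s + \int_0^T\widetilde{\theta}_s\theta_s\,ds = \int_0^T\widetilde{\theta}_s\,dY_s$. Both $\widehat{W}$ and $Y_T$ are linear functionals of $Y$, hence jointly Gaussian under $\mathbb{Q}$ with $\Cov_{\mathbb{Q}}(\widehat{W}, Y_T) = \int_0^T\widetilde{\theta}_s\,ds$. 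Using $\widetilde{\theta}_s = (\theta_s - \alpha)/\widetilde{\alpha}$ together with $\alpha = T^{-1}\int_0^T\theta_s\,ds$ (forced by taking the $\proba$-covariance of $\overline{W}_T = \alpha W_T + \widetilde{\alpha}\widetilde{W}$ against $W_T$ in Lemma \ref{lemma_representation}), this covariance equals $\widetilde{\alpha}^{-1}(\int_0^T\theta_s\,ds - \alpha T) = 0$, so joint Gaussianity delivers independence of $\widehat{W}$ and $Y_T$, equivalently of $\widehat{W}$ and $W_T$. Assembling the three factors then yields (\ref{eq_approximation_nonrandom_oneboundary}).
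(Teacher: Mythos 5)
Your proposal is correct and follows the same overall architecture as the paper's proof: substitute the factorization hypothesis into Theorem \ref{th_approximation_nonrandom_oneboundary}, identify the first factor via Lemma \ref{prop_condproba_nonrandom_oneboundary1}, and reduce the second factor to $\exp\big(\widetilde{\alpha}\int_0^T\widetilde{\theta}_s\theta_s\,ds\big)\mathcal{L}_N(\widetilde{\alpha})$. Where you genuinely depart from the paper is in the step $\esp_{\mathbb{Q}}\big[\exp(-\widetilde{\alpha}\widetilde{W})\mid W_T\big]=\esp_{\mathbb{Q}}\big[\exp(-\widetilde{\alpha}\widetilde{W})\big]$: the paper justifies this by citing ``the fact that $\widetilde{W}$ is independent from $W_T$,'' which is the $\proba$-independence supplied by Lemma \ref{lemma_representation}, and independence under $\proba$ does not automatically transfer to $\mathbb{Q}$ since $M_T$ depends on both variables. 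You correctly flag this as the crux and close it by working in the $Y$-representation: under $\mathbb{Q}$ the pair $\big(\int_0^T\widetilde{\theta}_s\,dY_s,\,Y_T\big)$ is jointly Gaussian with covariance $\int_0^T\widetilde{\theta}_s\,ds=\widetilde{\alpha}^{-1}\big(\int_0^T\theta_s\,ds-\alpha T\big)=0$ once one notes $\alpha=T^{-1}\int_0^T\theta_s\,ds$, so the needed $\mathbb{Q}$-independence holds. This computation is not in the paper and is a worthwhile addition; it both repairs the paper's too-quick appeal to $\proba$-independence and explains \emph{why} the conclusion is nonetheless true. The remaining steps (pulling out the deterministic shift, invoking the last assertion of Lemma \ref{lemma_representation} for the $\mathbb{Q}$-law of $\widetilde{W}+\int_0^T\widetilde{\theta}_s\theta_s\,ds$, and the observation that conditioning on $W_T$ and on $Y_T$ coincide) match the paper exactly.
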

Finally, we get $P_b^Y(T)$ in the next corollary by integrating $\proba (\Tau^Y_b \leq T | W_T)$ with respect to the value of $W_T$. The proof follows the steps of Equations (3) in Wang et al (\citeyear{wang1997boundary}) (p. 55). We define the standard Gaussian cdf as $\phi (t)=\int_{0}^{t} \frac{1}{\sqrt{2 \pi}} \exp{\Big(-\frac{u^2}{2}\Big)}  du$ for any $t \in \reels^+$. 
\begin{corollary}
\label{th_proba_nonrandom_oneboundary1}
Under \textbf{Assumption A}, we have
\begin{eqnarray}
\nonumber 
P_b^Y(T) & = & 1 - \phi \Big(\frac{b-u_T}{\sqrt{T}}\Big) + \int_{-\infty}^{b-u_T} \frac{1}{\sqrt{2 \pi T}} \exp{\Big(-\frac{x^2}{2T}\Big)} \\ & & \nonumber \times \exp{\big(-\alpha x + \frac{1}{2} \int_0^T \theta_s^2 ds\big)} \\ \label{eq_proba_nonrandom_oneboundary} &&\times\esp_{\mathbb{Q}} \big[\mathbf{1}_{\{\Tau^Y_b \leq T \}}  \exp{\big(-\widetilde{\alpha} \widetilde{W}\big)} | W_T =x \big] dx.
\end{eqnarray}
If we further assume (\ref{approximation_assumption}), we have
\begin{eqnarray}
\nonumber 
P_b^Y(T) & = & 1 - \phi \Big(\frac{b-u_T}{\sqrt{T}}\Big) \\ & & \nonumber+ \int_{-\infty}^{b-u_T} \frac{1}{\sqrt{2 \pi T}} \exp{\Big(-\frac{x^2}{2T}\Big)} \exp{\big(-\alpha x + \frac{1}{2} \int_0^T \theta_s^2 ds\big)}  \\ & & \label{eq_proba_approximation_nonrandom_oneboundary} \times \exp{\Big(-\frac{2b(b-u_T-x)}{T}\Big)}\exp{\big(\widetilde{\alpha} \int_0^T \widetilde{\theta}_s \theta_s ds\big)} \mathcal{L}_N(\widetilde{\alpha}) dx.
\end{eqnarray}
\end{corollary}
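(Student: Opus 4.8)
The plan is to compute the unconditional crossing probability by averaging the conditional probability $\proba(\Tau^Y_b \le T \mid W_T)$ over the law of $W_T$. First I would invoke the tower property, $P_b^Y(T) = \proba(\Tau^Y_b \le T) = \esp_{\proba}\big[\proba(\Tau^Y_b \le T \mid W_T)\big]$, and then rewrite this expectation as an integral against the density of $W_T$, which under $\proba$ is $N(0,T)$ with density $\tfrac{1}{\sqrt{2\pi T}}\exp(-x^2/(2T))$. This yields
\[
P_b^Y(T) = \int_{-\infty}^{\infty} \proba(\Tau^Y_b \le T \mid W_T = x)\,\frac{1}{\sqrt{2\pi T}}\exp\Big(-\frac{x^2}{2T}\Big)\,dx,
\]
and the key structural step is to split the domain of integration at the point $x = b - u_T$, which corresponds exactly to $Y_T = b$ since $Y_T = u_T + W_T$.

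On the upper region $\{x > b - u_T\}$, i.e.\ $\{Y_T > b\}$, I would argue that the conditional probability equals $1$: because $Y$ is continuous with $Y_0 = u_0 + W_0 = 0 < b$, the event $Y_T > b$ forces a crossing of the level $b$ in $[0,T]$, so $\{Y_T > b\} \subset \{\Tau^Y_b \le T\}$. Hence this region contributes $\int_{b-u_T}^{\infty} \tfrac{1}{\sqrt{2\pi T}}\exp(-x^2/(2T))\,dx = \proba(W_T > b - u_T) = 1 - \phi\big(\tfrac{b-u_T}{\sqrt T}\big)$, the first two terms of the claimed formula. On the lower region $\{x < b - u_T\}$ I would substitute the expression for $\proba(\Tau^Y_b \le T \mid W_T = x)$ supplied by Theorem~\ref{th_approximation_nonrandom_oneboundary}, namely the product of $\exp(-\alpha x + \tfrac12\int_0^T\theta_s^2\,ds)$ with the conditional $\mathbb{Q}$-expectation $\esp_{\mathbb{Q}}[\mathbf{1}_{\{\Tau^Y_b \le T\}}\exp(-\widetilde\alpha\widetilde W)\mid W_T = x]$. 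Combining the two regions gives \eqref{eq_proba_nonrandom_oneboundary}.

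For the second, fully explicit formula I would proceed identically but replace the lower-region integrand using Theorem~\ref{th_approximation2_nonrandom_oneboundary}. Since on $\{x < b - u_T\}$ we have $Y_T < b$, the indicator $\mathbf{1}_{\{Y_T \le b\}}$ equals $1$ and $\mathbf{1}_{\{Y_T > b\}}$ vanishes, so the bracketed factor in that theorem collapses to $\exp(-2b(b-Y_T)/T)$; writing $b - Y_T = b - u_T - x$ then produces the integrand of \eqref{eq_proba_approximation_nonrandom_oneboundary}. The computation is essentially bookkeeping, and I expect the only point requiring care to be the treatment of the boundary region: one must justify via the continuity argument above that the conditional probability is identically $1$ on $\{Y_T > b\}$, so that the split of the integration domain at $x = b - u_T$ matches precisely the split of the indicators $\mathbf{1}_{\{Y_T \le b\}}$ and $\mathbf{1}_{\{Y_T > b\}}$ appearing in the conditional formulae, rather than any genuine analytic difficulty.
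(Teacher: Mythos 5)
Your proposal is correct and follows essentially the same route as the paper's proof: condition on $W_T$, integrate against the $N(0,T)$ density, split the integral at $x=b-u_T$ using that the conditional crossing probability equals $1$ on $\{Y_T>b\}$, and substitute Theorem~\ref{th_approximation_nonrandom_oneboundary} (resp.\ Theorem~\ref{th_approximation2_nonrandom_oneboundary} with the indicator collapsing) on the remaining region. Your explicit continuity justification for the upper region is a small added detail the paper leaves implicit.
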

 We consider the particular case when the boundary is linear, there is no drift and the standard deviation is equal to unity $\mu_t=0$, $g_t= a t + b$ and $\sigma = 1$. Then, Corollary \ref{th_proba_nonrandom_oneboundary1} reduces to Wang et al. (\citeyear{wang1997boundary}) (Equation (2), p. 55), i.e., 
\begin{eqnarray*}
P_b^Y(T) & = & 1 - \phi (\frac{b+aT}{\sqrt{T}}) + \exp{\big(- 2 b a \big)} \phi (\frac{b- aT}{\sqrt{T}}).
\end{eqnarray*}

\subsection{One-sided stochastic boundary process case}
In this section, we consider the case when the one-sided boundary and the drift are stochastic processes and the variance is random.

We first give the definition of the set of stochastic boundary processes.
\begin{definition}\label{defboundarysetrandom}
We define the set of stochastic boundary processes as $\mathcal{H} = \reels^+ \times \Omega \rightarrow \reels$ such that for any $g \in \mathcal{H}$ and $\omega \in \Omega$ we have $g(\omega) \in \mathcal{G}$ and $g$ is $\mathbf{F}$-adapted.
\end{definition}
 We now give the definition of the FPT.
\begin{definition}\label{defFPTrandom}
We define the FPT of an $\mathbf{F}$-adapted continuous process $Z$ to a boundary 
$g \in \mathcal{H}$ satisfying $Z_0 \leq g_0$ $\forall \omega \in \Omega$ as
\begin{eqnarray}
\label{TgZdefrandom}
\Tau_g^Z = \inf \{t \in \reels^+ \text{ s.t. } Z_t \geq g_t\}.
\end{eqnarray}
\end{definition}
 We have that $Z-g$ is an $\mathbf{F}$-adapted continuous stochastic process and $\inf \{t \in \reels^+ \text{ s.t. }$ $Z_t \geq g_t\} = \inf \{t \in \reels^+ \text{ s.t. } Z_t - g_t \geq 0\} = \inf \{t \in \reels^+ \text{ s.t. } Z_t - g_t \in \reels^+\}.$ Thus, the FPT $\Tau_{g}^{Z}$ is an $\mathbf{F}$-stopping time by Theorem I.1.27 (p. 7) in Jacod et al. (\citeyear{JacodLimit2003}). We can rewrite the boundary crossing probability $P_g^Z$ as the cdf of $\Tau_g^Z$, i.e., 
\begin{eqnarray}
\label{PgZdef_random_oneboundary}
P_g^Z(t)= \proba (\Tau^Z_g \leq t) \text{ for any } t \geq 0.
\end{eqnarray}
We assume that $\mu$ is an $\mathbf{F}$-adapted stochastic process which satisfies $\proba(\mu_0 < g_0) = 1$. We also assume that the variance $\sigma^2$ is time-invariant, random, and such that $\proba(\sigma^2 = 0)=0$. Finally, we assume that $v$ is independent of $W$, where $v$ is defined as $v=(g,\mu,\sigma)$. 
\begin{proof}[\textbf{Assumption B}]
We assume that $\proba( \exists t \in [0,T] \text{ s.t } u_t \neq 0) = 1$. We also assume that $u$ is absolutely continuous on $[0,T]$, i.e., there exists a stochastic process $\theta : [0,T] \times \Omega \rightarrow \reels$ with $u_t=\int_0^t \theta_s ds$, a.s.. Finally, we assume that $\esp [\exp{\big(\frac{1}{2} \int_0^T \theta_s^2 ds\big)}] < \infty$.
\phantom\qedhere
\end{proof}
\begin{definition}
We define $M$ as
\begin{eqnarray}
\label{defM_random_oneboundary}
M_t=\exp{\big(\int_0^t \theta_s dW_s - \frac{1}{2} \int_0^t \theta_s^2 ds\big)} \text{ for any } 0 \leq t \leq T.
\end{eqnarray}
\end{definition}
 By \textbf{Assumption B}, $M$ satisfies Novikov's condition and thus is a positive martingale.
 \begin{lemma} \label{lemma_girsanov_random_oneboundary} Under \textbf{Assumption B}, we have that $M$ is a positive martingale. Thus, we can consider an equivalent probability measure $\mathbb{Q}$ such that the Radon-Nikodym derivative is defined as $\frac{d \mathbb{Q}}{d \proba}=M_T$. Finally, $Y$ is a standard Wiener process under $\mathbb{Q}$. \end{lemma}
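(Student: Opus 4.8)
The plan is to follow the same three steps as in the proof of Lemma~\ref{lemma_girsanov}, the only genuine difference being that $\theta$ is now a bona fide stochastic process rather than a deterministic function, so that the martingale property of $M$ can no longer be read off from a Gaussian computation and must instead be secured through Novikov's criterion. First I would record that $\theta$ is $\mathbf{F}$-progressively measurable: by Definition~\ref{defboundarysetrandom} the processes $g$ and $\mu$ are $\mathbf{F}$-adapted and $\sigma$ is a positive, time-invariant random variable, so that $u_t=\frac{\mu_t-\mu_0-g_t+g_0}{\sigma}$ is an $\mathbf{F}$-adapted, a.s.\ absolutely continuous process (well defined since $\proba(\sigma^2=0)=0$), and its density $\theta$, which exists by \textbf{Assumption B}, may be chosen progressively measurable. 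Consequently the stochastic integral $\int_0^t\theta_s\,dW_s$ is well defined, and $M$ in~(\ref{defM_random_oneboundary}) is the Dol\'eans--Dade exponential of $\int_0^{\cdot}\theta_s\,dW_s$, hence a nonnegative local martingale with $M_0=1$.

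Next I would upgrade $M$ from a local to a true martingale. The exponential-moment bound $\esp[\exp(\frac{1}{2}\int_0^T\theta_s^2\,ds)]<\infty$ imposed in \textbf{Assumption B} is precisely Novikov's condition, so Novikov's theorem yields that $(M_t)_{0\leq t\leq T}$ is a uniformly integrable martingale; in particular $\esp_\proba[M_T]=M_0=1$. Since $M_T=\exp\big(\int_0^T\theta_s\,dW_s-\frac{1}{2}\int_0^T\theta_s^2\,ds\big)>0$ almost surely, setting $\mathbb{Q}(A):=\esp_\proba[\mathbf{1}_A M_T]$ for $A\in\mathcal{F}_T$ defines a probability measure equivalent to $\proba$ on $\mathcal{F}_T$: the strict positivity of $M_T$ guarantees that $\proba$ and $\mathbb{Q}$ share the same null sets and that $M_T^{-1}$ exists a.s., and by construction $\frac{d\mathbb{Q}}{d\proba}=M_T$.

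Finally I would invoke the Girsanov theorem. Because $W$ is an $\mathbf{F}$-standard Wiener process under $\proba$ and $M$ is the exponential martingale generated by the progressively measurable integrand $\theta$, Girsanov's theorem asserts that the drift-corrected process $Y_t=u_t+W_t=W_t+\int_0^t\theta_s\,ds$ is an $\mathbf{F}$-standard Wiener process under $\mathbb{Q}$ on $[0,T]$, with the sign convention fixed by~(\ref{defM_random_oneboundary}); this is the claim. The step I expect to require the most care is the first one: in contrast with the nonrandom setting of Lemma~\ref{lemma_girsanov}, here one must check that the randomness entering through $\sigma$ and through the stochastic boundary does not destroy the progressive measurability of $\theta$, and that the stated exponential moment is genuinely Novikov's criterion for a random integrand. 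I would also note that the independence of $v=(g,\mu,\sigma)$ from $W$ assumed in this subsection is \emph{not} needed for the present lemma; it will only be used later, when conditioning on $v$. Once progressive measurability and the exponential moment are in hand, the martingale property and the change of measure are entirely standard, and the remainder of the argument transcribes that of Lemma~\ref{lemma_girsanov}.
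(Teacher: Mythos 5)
Your argument coincides with the paper's (essentially unwritten) proof of this lemma: the text preceding it merely observes that the exponential-moment bound in \textbf{Assumption B} is Novikov's condition, so that $M$ is a positive martingale, defines $\mathbb{Q}$ through $\frac{d\mathbb{Q}}{d\proba}=M_T$, and invokes the Girsanov theorem for $Y$; your extra care about the progressive measurability of $\theta$ and your remark that the independence of $v$ from $W$ is not needed here are refinements of, not departures from, that route. The one caveat—shared with the paper—is the sign: with $M$ the exponential martingale of $+\int\theta\,dW$, the textbook Girsanov statement makes $W_t-\int_0^t\theta_s\,ds$ a $\mathbb{Q}$-Wiener process, so obtaining $Y_t=W_t+\int_0^t\theta_s\,ds$ as stated requires applying the theorem with $-\theta$ (equivalently, redefining $M$ with $-\int\theta\,dW$), a point your appeal to ``the sign convention fixed by (\ref{defM_random_oneboundary})'' glosses over exactly as the paper does.
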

 The elementary idea in this section is to condition by both $W_T$ and $v$, i.e., to derive results of the form $\proba (\Tau^Y_b \leq T | W_T,v)$. The next proposition reexpresses $\proba (\Tau^Y_b \leq T | W_T,v)$ under $\mathbb{Q}$. We define $\overline{W}_t$ as
\begin{eqnarray}
\label{overlineWdef_random_oneboundary}
\overline{W}_t = \int_0^t \theta_s dW_s.
\end{eqnarray}
\begin{proposition}
\label{th_condproba_random_oneboundary}
Under \textbf{Assumption B}, we have
\begin{eqnarray}
\label{eq_condproba_random_oneboundary0}
\proba (\Tau^Y_b \leq T | W_T,v) & = & \esp_{\mathbb{Q}} \big[\mathbf{1}_{\{\Tau^Y_b \leq T \}} M_T^{-1} | W_T ,v\big].
\end{eqnarray}
This can be reexpressed as
\begin{eqnarray}
\label{eq_condproba_random_oneboundary}
\proba (\Tau^Y_b \leq T | W_T,v) & = & \esp_{\mathbb{Q}} \big[ M_T^{-1} \esp_\mathbb{Q} \big[\mathbf{1}_{\{\Tau^Y_b \leq T \}} | W_T,\overline{W}_T,v \big] | W_T,v \big].
\end{eqnarray}
\end{proposition}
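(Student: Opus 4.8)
The plan is to mirror the argument for the non-random Proposition~\ref{th_condproba_nonrandom_oneboundary}, now carrying the extra conditioning variable $v$. The starting point is Lemma~\ref{lemma_girsanov_random_oneboundary}: since $\frac{d\mathbb{Q}}{d\proba}=M_T$ with $M_T$ a positive martingale, a change of variables in the expectation gives $\esp_\proba[X]=\esp_{\mathbb{Q}}[X M_T^{-1}]$ for every $\mathcal{F}_T$-measurable integrable $X$. I would apply this with $X=\mathbf{1}_{\{\Tau^Y_b\leq T\}}\mathbf{1}_E$, where $E$ now ranges over $\sigma(W_T,v)$-measurable events, exactly as the event $E_T$ ranged over $\sigma(W_T)$-measurable events in the one-dimensional conditioning of the non-random case. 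Here Assumption~B is what makes the extra conditioning legitimate: since $v=(g,\mu,\sigma)$ is independent of $W$, the pair $(W_T,v)$ generates a well-behaved conditioning $\sigma$-field, and $\theta$ — hence $M_T$ and $M_T^{-1}$ — is a measurable function of $v$ in its coefficients, so freezing $v$ reduces the inner computation to the deterministic-coefficient situation treated earlier.

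First I would establish~\eqref{eq_condproba_random_oneboundary0}. Fix a $\sigma(W_T,v)$-measurable event $E$. On the one hand, because $\mathbf{1}_E$ is $\sigma(W_T,v)$-measurable, the tower property under $\proba$ yields $\esp_\proba[\mathbf{1}_{\{\Tau^Y_b\leq T\}}\mathbf{1}_E]=\esp_\proba[\proba(\Tau^Y_b\leq T\mid W_T,v)\,\mathbf{1}_E]$. On the other hand, the change of measure followed by the tower property under $\mathbb{Q}$ gives $\esp_\proba[\mathbf{1}_{\{\Tau^Y_b\leq T\}}\mathbf{1}_E]=\esp_{\mathbb{Q}}[\mathbf{1}_{\{\Tau^Y_b\leq T\}}M_T^{-1}\mathbf{1}_E]=\esp_{\mathbb{Q}}[\esp_{\mathbb{Q}}[\mathbf{1}_{\{\Tau^Y_b\leq T\}}M_T^{-1}\mid W_T,v]\,\mathbf{1}_E]$. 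Equating the two expressions for every such $E$ and invoking the characterising property of conditional expectation identifies the two $\sigma(W_T,v)$-measurable random variables, which is~\eqref{eq_condproba_random_oneboundary0}. To pass from there to~\eqref{eq_condproba_random_oneboundary}, I would insert the intermediate $\sigma$-field $\sigma(W_T,\overline{W}_T,v)$ and apply the tower property of $\mathbb{Q}$-conditional expectation, writing $\esp_{\mathbb{Q}}[\mathbf{1}_{\{\Tau^Y_b\leq T\}}M_T^{-1}\mid W_T,v]=\esp_{\mathbb{Q}}[\esp_{\mathbb{Q}}[\mathbf{1}_{\{\Tau^Y_b\leq T\}}M_T^{-1}\mid W_T,\overline{W}_T,v]\mid W_T,v]$. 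By~\eqref{defM_random_oneboundary} and~\eqref{overlineWdef_random_oneboundary} one has $M_T^{-1}=\exp(-\overline{W}_T+\frac{1}{2}\int_0^T\theta_s^2\,ds)$; since $\theta$ is a measurable function of $v$ and $\overline{W}_T$ appears in the conditioning, $M_T^{-1}$ is $\sigma(W_T,\overline{W}_T,v)$-measurable and therefore factors out of the inner conditional expectation, yielding~\eqref{eq_condproba_random_oneboundary}.

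The main obstacle is the bookkeeping created by the two measures. The left-hand conditional probability in~\eqref{eq_condproba_random_oneboundary0} is computed under $\proba$ while the right-hand conditional expectation is computed under $\mathbb{Q}$, and a priori restricting the density $M_T^{-1}$ to the sub-$\sigma$-field $\sigma(W_T,v)$ could produce an additional conditional Radon--Nikodym factor. The delicate step is therefore to justify that no such factor survives: concretely, one checks that the two displayed expressions for $\esp_\proba[\mathbf{1}_{\{\Tau^Y_b\leq T\}}\mathbf{1}_E]$ agree as functionals of the arbitrary $\sigma(W_T,v)$-measurable $\mathbf{1}_E$, and then appeals to the $\proba$--$\mathbb{Q}$ equivalence together with the independence of $v$ and $W$ from Assumption~B to conclude that the identified random variables coincide almost surely. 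Once this cross-measure identification is secured, the remaining manipulations — the two applications of the tower property and the $\sigma(W_T,\overline{W}_T,v)$-measurability of $M_T^{-1}$ — are routine.
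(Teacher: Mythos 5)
Your proposal follows the paper's own proof essentially step for step: test against $\sigma(W_T,v)$-measurable events, change measure via Lemma \ref{lemma_girsanov_random_oneboundary}, identify the two conditional expectations, then obtain (\ref{eq_condproba_random_oneboundary}) from (\ref{eq_condproba_random_oneboundary0}) by inserting the intermediate $\sigma$-field $\sigma(W_T,\overline{W}_T,v)$ and pulling out the $\sigma(W_T,\overline{W}_T,v)$-measurable factor $M_T^{-1}$. That second half is unproblematic, and your observation that $M_T^{-1}$ is $\sigma(W_T,\overline{W}_T,v)$-measurable because $\theta$ is $\sigma(v)$-measurable is exactly the point needed there.

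The difficulty is in the step you yourself flag as delicate, and the justification you offer does not close it. After the change of measure you have, for every $E\in\sigma(W_T,v)$,
\begin{eqnarray*}
\esp_\proba \big[\proba(\Tau^Y_b\leq T\mid W_T,v)\,\mathbf{1}_E\big] & = & \esp_{\mathbb{Q}}\big[\esp_{\mathbb{Q}}[\mathbf{1}_{\{\Tau^Y_b\leq T\}}M_T^{-1}\mid W_T,v]\,\mathbf{1}_E\big],
\end{eqnarray*}
with the left side integrated against $\proba$ and the right side against $\mathbb{Q}$. The characterising property of conditional expectation identifies two $\sigma(W_T,v)$-measurable random variables only when both are tested under the \emph{same} measure; converting the right-hand side back to a $\proba$-integral produces the conditional Bayes formula
\begin{eqnarray*}
\proba(\Tau^Y_b\leq T\mid W_T,v) & = & \frac{\esp_{\mathbb{Q}}\big[\mathbf{1}_{\{\Tau^Y_b\leq T\}}M_T^{-1}\mid W_T,v\big]}{\esp_{\mathbb{Q}}\big[M_T^{-1}\mid W_T,v\big]},
\end{eqnarray*}
and the denominator is identically $1$ if and only if $\proba$ and $\mathbb{Q}$ agree on $\sigma(W_T,v)$. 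They do not in general: conditionally on $v$, $W_T$ is $N(0,T)$ under $\proba$ but, since $Y=W+u$ is a $\mathbb{Q}$-Wiener process, $W_T=Y_T-u_T$ has nonzero conditional mean under $\mathbb{Q}$ whenever $u_T\neq 0$. (Taking the indicator identically equal to $1$ makes the failure visible: the claimed identity would then assert $1=\esp_{\mathbb{Q}}[M_T^{-1}\mid W_T,v]$.) Neither the equivalence of $\proba$ and $\mathbb{Q}$ nor the independence of $v$ and $W$ under $\proba$ supplies what is needed; the "conditional Radon--Nikodym factor" you worry about genuinely survives. To be fair, the paper's own proof makes the identical leap ("by definition of the conditional expectation"), so you have faithfully reproduced the intended argument --- but as written the identification step would fail unless the normalising factor $\esp_{\mathbb{Q}}[M_T^{-1}\mid W_T,v]$ is carried along or shown to equal one under an extra hypothesis such as $u_T=0$.
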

 To obtain a more explicit formula, we use an elementary decomposition of the Radon-Nikodym derivative inverse $M_T^{-1}$. This decomposition expresses
it as the product of a random variable which is $\sigma(W_T,v)$-measurable and a random variable conditionally independent from $W_T$ given $v$. Since $\theta$ is a stochastic process, we do not have that $\overline{W}_T$ is a normal random variable, but rather that it is a mixed normal random variable. The elementary idea is to normalize $\overline{W}_T$ by $\sqrt{\int_0^T \theta_s^2 ds}$, so that $(W_T,\frac{\overline{W}_T}{\sqrt{\int_0^T \theta_s^2 ds}})$ 
is a centered normal random vector under $\proba$. We define the correlation under $\proba$ between $W_T$ and $\frac{\overline{W}_T}{\sqrt{\int_0^T \theta_s^2 ds}}$ as $\rho$, i.e., $\rho = \operatorname{Cor}_\proba(W_T, \frac{\overline{W}_T}{\sqrt{\int_0^T \theta_s^2 ds}})$. 
\begin{lemma} \label{lemma_representation_random_oneboundary} Under \textbf{Assumption B}, we have that $\frac{\overline{W}_T}{\sqrt{\int_0^T \theta_s^2 ds}}$ is a standard normal random variable under $\proba$. We can also show that $\rho  = \frac{1}{T}\esp_\proba \Big[\frac{\int_0^T \theta_s ds }{\sqrt{\int_0^T \theta_s^2 ds}}\Big]$. Moreover, there exists a standard normal random variable $\widetilde{W}$ under $\proba$, which is independent of $W_T$, and such that $\overline{W}_T$, when normalized, can be reexpressed a.s. as
\begin{eqnarray}
\label{def_Wtilde_random_oneboundary}
\frac{\overline{W}_T}{\sqrt{\int_0^T \theta_s^2 ds}} = \rho \frac{W_T}{\sqrt{T}} + \sqrt{1-\rho^2} \widetilde{W}.
\end{eqnarray}
This can be reexpressed a.s. as
\begin{eqnarray}
\label{def_Wtilde0_random_oneboundary}
\overline{W}_T = \alpha W_T + \widetilde{\alpha} \widetilde{W},
\end{eqnarray}
where $\alpha = \rho\sqrt{T^{-1}\int_0^T \theta_s^2 ds}$ a.s. and $\widetilde{\alpha} = \sqrt{(1- \rho^2) \int_0^T \theta_s^2 ds}$ a.s.. If we define $\widetilde{\theta}_t = \frac{\theta_s - \alpha}{\widetilde{\alpha}}$, we can reexpress $\widetilde{W}$ a.s. as
\begin{eqnarray}
\label{def_Wtilde1_random_oneboundary}
\widetilde{W} & = & \int_0^T \widetilde{\theta}_s dW_s.
\end{eqnarray}
Moreover, $\widetilde{W}+\int_0^T \widetilde{\theta}_s \theta_s ds$ is a standard normal variable under $\mathbb{Q}$. Finally, the conditional distribution of $\widetilde{W}+\int_0^T \widetilde{\theta}_s \theta_s ds$ given $v$, i.e., $\mathcal{D}(\widetilde{W}+\int_0^T \widetilde{\theta}_s \theta_s ds | v)$, is standard normal under $\mathbb{Q}$.
\end{lemma}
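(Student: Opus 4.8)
The plan is to \emph{condition on} $v=(g,\mu,\sigma)$ and reduce the statement to its deterministic analogue, Lemma~\ref{lemma_representation}. By \textbf{Assumption B} the drift derivative $\theta$ is determined by $u_t=\frac{\mu_t-\mu_0-g_t+g_0}{\sigma}$, hence is $\sigma(v)$-measurable, while $v$ is independent of $W$; moreover $\esp[\exp(\tfrac{1}{2}\int_0^T\theta_s^2\,ds)]<\infty$ together with the nondegeneracy of $u$ guarantee that the square-integrability and nondegeneracy of \textbf{Assumption A} hold for $\proba$-a.e.\ value of $v$. Thus conditioning on $v$ freezes $\theta$ to a fixed deterministic function satisfying \textbf{Assumption A} while leaving the conditional law of $W$ unchanged. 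Every assertion of the lemma will be obtained by applying the corresponding conclusion of Lemma~\ref{lemma_representation} pathwise in $v$ and then either averaging over $v$ or invoking $W\indep v$ to pass to the unconditional statement.

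I would first settle the normalization claim. Conditionally on $v$, the variable $\overline{W}_T/\sqrt{\int_0^T\theta_s^2\,ds}$ is the Wiener integral of the deterministic integrand $\theta_s/\sqrt{\int_0^T\theta_s^2\,ds}$, whose $L^2([0,T])$ norm is one; its conditional law is thus standard normal and, crucially, does not depend on $v$, so the unconditional law is standard normal as well. For the correlation, I would compute $\Cov_\proba\big(W_T,\overline{W}_T/\sqrt{\int_0^T\theta_s^2\,ds}\mid v\big)=\int_0^T\theta_s\,ds/\sqrt{\int_0^T\theta_s^2\,ds}$ via the It\^{o} isometry (writing $W_T=\int_0^T dW_s$), then average this over $v$ and normalize by the marginal standard deviations $\sqrt{T}$ and $1$ to recover the stated expression for $\rho$.

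Next comes the decomposition and the properties of $\widetilde{W}$. The algebraic identity \eqref{def_Wtilde0_random_oneboundary} is immediate from linearity of the Wiener integral: with $\widetilde{\theta}_s=\frac{\theta_s-\alpha}{\widetilde{\alpha}}$ one has $\alpha W_T+\widetilde{\alpha}\int_0^T\widetilde{\theta}_s\,dW_s=\int_0^T\theta_s\,dW_s=\overline{W}_T$, and dividing through by $\sqrt{\int_0^T\theta_s^2\,ds}$ gives \eqref{def_Wtilde_random_oneboundary} once one checks the scalar identities $\alpha/\sqrt{\int_0^T\theta_s^2\,ds}=\rho/\sqrt{T}$ and $\widetilde{\alpha}/\sqrt{\int_0^T\theta_s^2\,ds}=\sqrt{1-\rho^2}$. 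The distributional statements I would again read off conditionally from Lemma~\ref{lemma_representation}: conditionally $(W_T,\widetilde{W})$ is jointly Gaussian, and the identities $\int_0^T\widetilde{\theta}_s^2\,ds=1$ and $\int_0^T\widetilde{\theta}_s\,ds=0$ make $\widetilde{W}$ conditionally standard normal and conditionally uncorrelated with, hence independent of, $W_T$. For the two $\mathbb{Q}$-assertions I would use that, conditionally on $v$, the exponential in Lemma~\ref{lemma_girsanov_random_oneboundary} has deterministic integrand, so $Y$ is a standard Wiener process under $\mathbb{Q}$ given $v$; rewriting $\widetilde{W}+\int_0^T\widetilde{\theta}_s\theta_s\,ds=\int_0^T\widetilde{\theta}_s\,dY_s$ then exhibits it as conditionally standard normal under $\mathbb{Q}$, which is exactly the final conditional-law claim, and the unconditional $\mathbb{Q}$-normality follows since this conditional law does not depend on $v$.

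The delicate point, and the main obstacle, is the passage between the conditional and unconditional pictures. Unconditionally $(W_T,\overline{W}_T)$ is \emph{not} jointly Gaussian but a $v$-mixture of Gaussians whose correlation $\rho_v=\int_0^T\theta_s\,ds/\sqrt{T\int_0^T\theta_s^2\,ds}$ depends on $v$, so vanishing \emph{unconditional} covariance between $\widetilde{W}$ and $W_T$ does not by itself yield unconditional independence. The clean resolution is to read \eqref{def_Wtilde_random_oneboundary} and \eqref{def_Wtilde1_random_oneboundary} at the conditional level, with $\alpha,\widetilde{\alpha},\widetilde{\theta}$ and the operative correlation taken as the $v$-measurable conditional quantities (for which $\int_0^T\widetilde{\theta}_s^2\,ds=1$ and $\int_0^T\widetilde{\theta}_s\,ds=0$ hold identically in $v$), and then to use $W\indep v$ to lift only the genuinely unconditional conclusions---the standard normality of the normalized $\overline{W}_T$, the averaged formula for $\rho$, and the $\mathbb{Q}$-laws. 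This conditioning device is precisely what lets Lemma~\ref{lemma_representation} be reused without change.
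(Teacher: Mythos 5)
Your computational steps coincide with the paper's own proof: the paper likewise conditions on $v$, computes the conditional mean and variance of $\overline{W}_T/\sqrt{\int_0^T\theta_s^2\,ds}$ via the It\^{o} isometry and the $\sigma(v)$-measurability of $\int_0^T\theta_s^2\,ds$, and obtains $\rho$ by the law of total expectation applied to the conditional covariance. Where you genuinely depart from the paper is on the point you yourself flag as delicate, and you are right to flag it. The paper's proof asserts in one line that $(W_T,\overline{W}_T/\sqrt{\int_0^T\theta_s^2\,ds})$ ``is a centered normal random vector under $\proba$'' and from joint Gaussianity extracts a standard normal $\widetilde{W}$ independent of $W_T$; but marginal normality of each coordinate does not give joint normality, and here the pair is a $v$-mixture of bivariate Gaussians whose conditional correlation $\rho_v=\int_0^T\theta_s\,ds/\sqrt{T\int_0^T\theta_s^2\,ds}$ varies with $v$, so the unconditional pair is not Gaussian and the paper's construction of $\widetilde{W}$ (with the deterministic averaged $\rho$ in the coefficients) does not yield independence from $W_T$. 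Your repair --- defining $\alpha$, $\widetilde{\alpha}$, $\widetilde{\theta}$ through the $\sigma(v)$-measurable conditional correlation, so that $\int_0^T\widetilde{\theta}_s\,ds=0$ and $\int_0^T\widetilde{\theta}_s^2\,ds=1$ hold identically in $v$ --- is the right one, and you can in fact push it one step further than you do: since, conditionally on $v$, the pair $(\widetilde{W},W_T)$ is Gaussian, uncorrelated, with conditional laws $N(0,1)\otimes N(0,T)$ that do not depend on $v$, the \emph{unconditional} joint law is also $N(0,1)\otimes N(0,T)$, so full unconditional independence of $\widetilde{W}$ from $W_T$ is recovered and the lemma's conclusion survives intact; there is no need to retreat to lifting ``only the genuinely unconditional conclusions.'' The same conditional reading disposes of the two $\mathbb{Q}$-claims exactly as you describe, matching the paper's use of $\widetilde{W}+\int_0^T\widetilde{\theta}_s\theta_s\,ds=\int_0^T\widetilde{\theta}_s\,dY_s$ with $Y$ a $\mathbb{Q}$-Wiener process. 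In short: your proposal is correct, follows the paper's route for the computations, and is more careful than the paper precisely where the paper's own argument has a gap.
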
 
 Our main result is the next theorem, which gives a more explicit formula to $\proba (\Tau^Y_b \leq T | W_T,v)$.
\begin{theorem}
\label{th_approximation_random_oneboundary}
Under \textbf{Assumption B},
we have
\begin{eqnarray}
\nonumber
\proba (\Tau^Y_b \leq T | W_T,v) & = & \exp{\big(-\alpha W_T + \frac{1}{2} \int_0^T \theta_s^2 ds\big)} \\& &\times \esp_{\mathbb{Q}} \big[\mathbf{1}_{\{\Tau^Y_b \leq T \}}  \exp{\big(-\widetilde{\alpha} \widetilde{W}\big)} | W_T,v \big] \label{eq_nonapproximation_random_oneboundary}.
\end{eqnarray}
\end{theorem}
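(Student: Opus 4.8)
The plan is to follow the proof of Theorem~\ref{th_approximation_nonrandom_oneboundary} line for line, now carrying the extra conditioning variable $v$ through every step and tracking which quantities are $\sigma(v)$-measurable rather than deterministic constants. First I would start from the identity supplied by Proposition~\ref{th_condproba_random_oneboundary},
\[
\proba (\Tau^Y_b \leq T | W_T,v) = \esp_{\mathbb{Q}} \big[\mathbf{1}_{\{\Tau^Y_b \leq T \}} M_T^{-1} | W_T ,v\big],
\]
and rewrite the Radon--Nikodym derivative inverse: by the definitions (\ref{defM_random_oneboundary}) and (\ref{overlineWdef_random_oneboundary}) one has $M_T^{-1} = \exp\big(-\overline{W}_T + \tfrac12 \int_0^T \theta_s^2\,ds\big)$ a.s.

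Next I would insert the a.s.\ decomposition $\overline{W}_T = \alpha W_T + \widetilde{\alpha}\widetilde{W}$ of Lemma~\ref{lemma_representation_random_oneboundary}, which holds under $\proba$ and hence also under the equivalent measure $\mathbb{Q}$, giving
\[
M_T^{-1} = \exp\big(-\alpha W_T + \tfrac12 \int_0^T \theta_s^2\,ds\big)\,\exp\big(-\widetilde{\alpha}\widetilde{W}\big) \quad \text{a.s.}
\]
Substituting this into the conditional expectation, the decisive step is to pull the first exponential factor out of $\esp_{\mathbb{Q}}[\,\cdot\,| W_T,v]$. This is legitimate because that factor is $\sigma(W_T,v)$-measurable: the process $u$, and therefore its density $\theta$, is a functional of $v=(g,\mu,\sigma)$ alone, so $\int_0^T \theta_s^2\,ds$ is $\sigma(v)$-measurable, and by Lemma~\ref{lemma_representation_random_oneboundary} so is $\alpha = \rho\sqrt{T^{-1}\int_0^T \theta_s^2\,ds}$, since $\rho$ is a deterministic constant. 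Taking out what is $\sigma(W_T,v)$-measurable then yields exactly (\ref{eq_nonapproximation_random_oneboundary}).

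I expect the only genuine subtlety to be this measurability bookkeeping, which is where the random case departs from the deterministic one. In Theorem~\ref{th_approximation_nonrandom_oneboundary} the coefficients $\alpha$, $\widetilde{\alpha}$ and $\int_0^T \theta_s^2\,ds$ were genuine constants; here they are $\sigma(v)$-measurable random variables, and it is precisely this measurability that permits extracting $\exp\big(-\alpha W_T + \tfrac12 \int_0^T \theta_s^2\,ds\big)$ while the remaining factor $\exp\big(-\widetilde{\alpha}\widetilde{W}\big)$ must stay inside, since $\widetilde{W} = \int_0^T \widetilde{\theta}_s\,dW_s$ depends on the Wiener path and is only independent of $W_T$, not $\sigma(W_T,v)$-measurable. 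Finally I would record that integrability poses no problem, as $\esp_{\mathbb{Q}}[M_T^{-1}] = \esp_{\proba}[1] = 1$ under \textbf{Assumption B}, so every conditional expectation above is well defined.
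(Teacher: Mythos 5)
Your proposal is correct and follows essentially the same route as the paper's proof: start from Proposition \ref{th_condproba_random_oneboundary}, factor $M_T^{-1}$ via the a.s.\ decomposition $\overline{W}_T = \alpha W_T + \widetilde{\alpha}\widetilde{W}$ of Lemma \ref{lemma_representation_random_oneboundary}, and pull the $\sigma(W_T,v)$-measurable exponential out of the conditional expectation. Your explicit bookkeeping of why $\alpha$ and $\int_0^T \theta_s^2\,ds$ are $\sigma(v)$-measurable, and the integrability remark, only make explicit what the paper states in one line.
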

 We first calculate $\mathbb{Q} (\Tau^Y_b \leq T | W_T,v)$.
\begin{lemma}
\label{prop_condproba_random_oneboundary1}
Under \textbf{Assumption B}, we have
\begin{eqnarray}
\label{propeq_condproba_random_oneboundary1}
\mathbb{Q} (\Tau^Y_b \leq T | W_T,v) & = &  \exp{\Big(-\frac{2b(b-Y_T)}{T}\Big)}\mathbf{1}_{\{Y_T \leq b\}} + \mathbf{1}_{\{Y_T > b\}}.
\end{eqnarray}
\end{lemma}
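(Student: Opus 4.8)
The plan is to mirror the proof of the nonrandom analogue, Lemma \ref{prop_condproba_nonrandom_oneboundary1}, reducing everything to a statement that holds conditionally on $v$ and then invoking the reflection principle (equivalently, Malmquist's formula with null drift) for the constant boundary $b$. The only genuinely new ingredient, compared with the nonrandom case, is that one must work conditionally on $v$ throughout, exploiting that $v$ is independent of $W$ under $\proba$.

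First I would establish the key structural fact strengthening Lemma \ref{lemma_girsanov_random_oneboundary}: under $\mathbb{Q}$, conditionally on $v$, the process $Y$ is a standard Wiener process. To see this, fix a bounded functional $F$ of the path of $Y$ and a bounded measurable $G$ of $v$, and compute, using $\frac{d\mathbb{Q}}{d\proba} = M_T$ and then conditioning on $v$ under $\proba$,
$$\esp_\mathbb{Q}\big[F(Y)\, G(v)\big] = \esp_\proba\big[G(v)\, \esp_\proba[F(Y)\, M_T \mid v]\big].$$
Given $v$, the process $\theta$ is a deterministic function with $\int_0^T \theta_s^2\, ds < \infty$ (the latter holding $\proba$-a.s. by \textbf{Assumption B}), so $M_T$ is exactly the deterministic-$\theta$ Girsanov density, and by the independence of $v$ and $W$ the inner expectation equals the expectation of $F$ under the measure that turns $Y_t = W_t + \int_0^t \theta_s\, ds$ into a standard Wiener process. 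Taking $F \equiv 1$ shows $\esp_\proba[M_T \mid v] = 1$ a.s., so $v$ has the same law under $\mathbb{Q}$ as under $\proba$; consequently the displayed identity says precisely that the $\mathbb{Q}$-conditional law of $Y$ given $v$ is Wiener measure, consistently with the conditional Gaussianity already recorded in Lemma \ref{lemma_representation_random_oneboundary}.

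With that in hand, I fix $v$ and work under the conditional law $\mathbb{Q}(\cdot \mid v)$. Here $b = \frac{g_0 - \mu_0}{\sigma}$ is a constant with $b > 0$ (since $\mu_0 < g_0$ and $\sigma > 0$ a.s. by \textbf{Assumption B}), the process $Y$ starts at $Y_0 = 0$, and $\{\Tau^Y_b \le T\}$ is the event $\{\sup_{0 \le t \le T} Y_t \ge b\}$ for a standard Wiener process. The classical reflection-principle computation, equivalently Malmquist's formula specialized to zero drift exactly as used in Lemma \ref{prop_condproba_nonrandom_oneboundary1}, then gives
$$\mathbb{Q}\big(\Tau^Y_b \le T \mid Y_T = y, v\big) = \exp\Big(-\frac{2b(b-y)}{T}\Big)\mathbf{1}_{\{y \le b\}} + \mathbf{1}_{\{y > b\}}.$$

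Finally I would translate the conditioning from $(Y_T, v)$ to $(W_T, v)$. Given $v$, the quantity $u_T = \int_0^T \theta_s\, ds$ is determined and $Y_T = u_T + W_T$, so $\sigma(Y_T, v) = \sigma(W_T, v)$; substituting $y = Y_T$ into the previous display yields the claimed formula. The main obstacle is the conditional Girsanov step in the second paragraph: one must verify that the random drift $\theta$ becomes an admissible deterministic integrand once $v$ is fixed, which is where the a.s.\ finiteness of $\int_0^T \theta_s^2\, ds$ from \textbf{Assumption B} and the $v$--$W$ independence enter, after which everything reduces to the nonrandom argument applied conditionally on $v$.
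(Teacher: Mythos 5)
Your proposal is correct and follows essentially the same route as the paper: pass to $\mathbb{Q}$ via Girsanov, observe that $Y$ becomes a standard Wiener process, and invoke Malmquist's formula (equivalently the reflection principle) for the constant boundary $b$, finally noting $\sigma(Y_T,v)=\sigma(W_T,v)$. The one difference is that you explicitly establish the conditional statement that $Y$ is a standard Wiener process under $\mathbb{Q}(\cdot\mid v)$ (and that $v$ keeps its law under $\mathbb{Q}$), which is the step actually needed to apply Malmquist's formula when conditioning on $v$ with the $\sigma(v)$-measurable level $b$; the paper's proof invokes only the unconditional statement of Lemma \ref{lemma_girsanov_random_oneboundary} and leaves this refinement implicit, so your version is, if anything, more complete.
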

 The next theorem gives a formula based on a theoretical assumption (\ref{approximation_assumption_random_oneboundary}).
\begin{theorem}
\label{th_approximation2_random_oneboundary}
We assume that \textbf{Assumption B} and the following assumption 
\begin{eqnarray}
\label{approximation_assumption_random_oneboundary}
\esp_{\mathbb{Q}} \big[\mathbf{1}_{\{\Tau^Y_b \leq T \}}  \exp{\big(-\widetilde{\alpha} \widetilde{W}\big)} | W_T ,v\big] \\ \nonumber = \esp_{\mathbb{Q}} \big[\mathbf{1}_{\{\Tau^Y_b \leq T \}}  | W_T,v \big] \esp_{\mathbb{Q}} \big[  \exp{\big(-\widetilde{\alpha} \widetilde{W}\big)} | W_T,v \big]
\end{eqnarray}
holds. Then, we have
\begin{eqnarray}
\nonumber \proba (\Tau^Y_b \leq T | W_T,v) & = & \exp{\big(-\alpha W_T + \frac{1}{2} \int_0^T \theta_s^2 ds\big)} \\ \nonumber && \times \Big(\exp{\Big(-\frac{2b(b-Y_T)}{T}\Big)}\mathbf{1}_{\{Y_T \leq b\}} + \mathbf{1}_{\{Y_T > b\}}\Big) \\ & & \label{eq_approximation_random_oneboundary} \times \exp{\big(\widetilde{\alpha} \int_0^T \widetilde{\theta}_s \theta_s ds\big)} \mathcal{L}_N(\widetilde{\alpha}).
\end{eqnarray}
\end{theorem}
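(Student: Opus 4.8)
The plan is to treat this as the random-variance counterpart of Theorem~\ref{th_approximation2_nonrandom_oneboundary}, feeding the factorization hypothesis (\ref{approximation_assumption_random_oneboundary}) into the representation already established in Theorem~\ref{th_approximation_random_oneboundary}. That representation reduces the whole task to evaluating the single conditional expectation $\esp_{\mathbb{Q}}[\mathbf{1}_{\{\Tau^Y_b \leq T\}}\exp(-\widetilde{\alpha}\widetilde{W})\mid W_T,v]$, since the prefactor $\exp(-\alpha W_T + \tfrac{1}{2}\int_0^T\theta_s^2\,ds)$ is already in closed form. Under (\ref{approximation_assumption_random_oneboundary}) this conditional expectation splits as the product of $\esp_{\mathbb{Q}}[\mathbf{1}_{\{\Tau^Y_b \leq T\}}\mid W_T,v]=\mathbb{Q}(\Tau^Y_b \leq T\mid W_T,v)$ and $\esp_{\mathbb{Q}}[\exp(-\widetilde{\alpha}\widetilde{W})\mid W_T,v]$. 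The first factor is exactly the quantity computed in Lemma~\ref{prop_condproba_random_oneboundary1}, so it can be substituted verbatim as $\exp(-\tfrac{2b(b-Y_T)}{T})\mathbf{1}_{\{Y_T\leq b\}}+\mathbf{1}_{\{Y_T>b\}}$.

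The substance of the proof is therefore the evaluation of the Laplace-type factor $\esp_{\mathbb{Q}}[\exp(-\widetilde{\alpha}\widetilde{W})\mid W_T,v]$. Writing $\kappa:=\int_0^T\widetilde{\theta}_s\theta_s\,ds$, which is $\sigma(v)$-measurable, I would use the shift $\exp(-\widetilde{\alpha}\widetilde{W})=\exp(\widetilde{\alpha}\kappa)\exp(-\widetilde{\alpha}(\widetilde{W}+\kappa))$ so as to work with the variable $\widetilde{W}+\kappa$ that Lemma~\ref{lemma_representation_random_oneboundary} identifies as, conditionally on $v$, standard normal under $\mathbb{Q}$. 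Since $\widetilde{\alpha}$ and $\kappa$ are $\sigma(v)$-measurable they pass out of the conditional expectation as constants, and conditional standard normality then returns the Laplace transform $\mathcal{L}_N(\widetilde{\alpha})$ evaluated at the (conditionally constant) argument $\widetilde{\alpha}$, giving $\esp_{\mathbb{Q}}[\exp(-\widetilde{\alpha}\widetilde{W})\mid W_T,v]=\exp(\widetilde{\alpha}\kappa)\mathcal{L}_N(\widetilde{\alpha})$. Collecting the three pieces — the prefactor, $\mathbb{Q}(\Tau^Y_b \leq T\mid W_T,v)$, and $\exp(\widetilde{\alpha}\kappa)\mathcal{L}_N(\widetilde{\alpha})$ — then reproduces (\ref{eq_approximation_random_oneboundary}) exactly.

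The one step that needs care, and the place I expect the only genuine obstacle, is reducing the conditioning in $\esp_{\mathbb{Q}}[\exp(-\widetilde{\alpha}(\widetilde{W}+\kappa))\mid W_T,v]$ from $(W_T,v)$ down to $v$ alone: Lemma~\ref{lemma_representation_random_oneboundary} asserts only conditional standard normality of $\widetilde{W}+\kappa$ given $v$ under $\mathbb{Q}$, whereas I also need $\widetilde{W}+\kappa$ to be conditionally independent of $W_T$ given $v$ under $\mathbb{Q}$. I would establish this from the orthogonality built into $\widetilde{\theta}$. Under $\mathbb{Q}$ the process $Y$ is a standard Wiener process, and rewriting the stochastic integral via $dW_s=dY_s-\theta_s\,ds$ gives $\widetilde{W}+\kappa=\int_0^T\widetilde{\theta}_s\,dY_s$ while $W_T=Y_T-u_T$ with $u_T$ being $\sigma(v)$-measurable. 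Conditionally on $v$ these are jointly Gaussian, with covariance $\int_0^T\widetilde{\theta}_s\,ds$; this integral vanishes by the orthogonality underlying the construction of $\widetilde{\theta}$ in Lemma~\ref{lemma_representation_random_oneboundary} — the same orthogonality that renders $\widetilde{W}$ independent of $W_T$ under $\proba$ — and joint Gaussianity then upgrades zero covariance to conditional independence. With this in hand the conditional expectation collapses onto $v$, the standard-normal computation above applies verbatim, and the three factors multiply to the claimed identity, completing the proof.
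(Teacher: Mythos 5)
Your proposal is correct and follows essentially the same route as the paper's proof: apply the factorization hypothesis (\ref{approximation_assumption_random_oneboundary}) inside the representation of Theorem~\ref{th_approximation_random_oneboundary}, substitute Lemma~\ref{prop_condproba_random_oneboundary1} for $\mathbb{Q}(\Tau^Y_b \leq T \mid W_T, v)$, and evaluate $\esp_{\mathbb{Q}}\big[\exp(-\widetilde{\alpha}\widetilde{W}) \mid W_T, v\big]$ by shifting to $\widetilde{W}+\int_0^T\widetilde{\theta}_s\theta_s\,ds$, pulling out the $\sigma(v)$-measurable factors, and invoking its conditional standard normality under $\mathbb{Q}$ to obtain $\exp\big(\widetilde{\alpha}\int_0^T\widetilde{\theta}_s\theta_s\,ds\big)\mathcal{L}_N(\widetilde{\alpha})$. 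If anything, your treatment of the reduction of the conditioning from $(W_T,v)$ to $v$ is more careful than the paper's, which at that step invokes only the unconditional independence of $\widetilde{W}$ from $W_T$ under $\proba$, whereas you correctly identify that conditional independence given $v$ under $\mathbb{Q}$ is what is actually needed and supply a Gaussian zero-covariance argument for it.
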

 Finally, we get $P_b^Y(T)$ in the next corollary by integrating $\proba (\Tau^Y_b \leq T | W_T,v)$ with respect to the value of $(W_T,v)$. We define the arrival space and cdf of $v$ as respectively $\Pi_v$ and $P_v$. Moreover,  we define $y_u$, $y_b$, $y_\theta$, etc. following the above definitions when integrating with respect to $y \in \Pi_v$.
\begin{corollary}
\label{th_proba_random_oneboundary1}
Under \textbf{Assumption B}, we have
\begin{eqnarray}
\nonumber 
P_b^Y(T) & = & 1 - \phi \Big(\frac{b-u_T}{\sqrt{T}}\Big) \\ \nonumber & &+ \int_{-\infty}^{b-u_T} \int_{\Pi_v} \frac{1}{\sqrt{2 \pi T}} \exp{\Big(-\frac{x^2}{2T}\Big)} \exp{\big(-y_\alpha x + \frac{1}{2} \int_0^T y_{\theta,s}^2 ds\big)} \\ \label{eq_proba_random_oneboundary} &&\times \esp_{\mathbb{Q}} \big[\mathbf{1}_{\{\Tau^Y_b \leq T \}}  \exp{\big(-\widetilde{\alpha} \widetilde{W}\big)} | W_T =x,v=y \big] dx dP_v(y).
\end{eqnarray}
If we further assume (\ref{approximation_assumption_random_oneboundary}), we have
\begin{eqnarray}
\nonumber 
P_b^Y(T) & = & 1 - \phi \Big(\frac{b-u_T}{\sqrt{T}}\Big) \\ \nonumber && + \int_{-\infty}^{b-u_T} \int_{\Pi_v} \frac{1}{\sqrt{2 \pi T}} \exp{\Big(-\frac{x^2}{2T}\Big)} \exp{\big(-y_\alpha x + \frac{1}{2} \int_0^T y_{\theta,s}^2 ds\big)}  \\ & & \label{eq_proba_approximation_random_oneboundary} \times p_v(y)\exp{\Big(-\frac{2y_b(y_b-y_{u,T}-x)}{T}\Big)} \\ & & \nonumber \times \exp{\big(y_{\widetilde{\alpha}} \int_0^T y_{\widetilde{\theta},s} y_{\theta,s} ds\big)} \mathcal{L}_N(y_{\widetilde{\alpha}}) dx d P_v(y).
\end{eqnarray}
\end{corollary}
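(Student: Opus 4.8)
The plan is to obtain $P_b^Y(T) = \proba(\Tau^Y_b \leq T)$ by integrating the conditional probability $\proba(\Tau^Y_b \leq T \mid W_T, v)$ supplied by Theorem~\ref{th_approximation_random_oneboundary} against the joint law of $(W_T, v)$, exactly as in the nonrandom case of Corollary~\ref{th_proba_nonrandom_oneboundary1} but carrying the extra integration over $v$. First I would apply the tower property to write $P_b^Y(T) = \esp_{\proba}[\proba(\Tau^Y_b \leq T \mid W_T, v)]$. Since \textbf{Assumption B} stipulates that $v$ is independent of $W$, the joint law of $(W_T, v)$ factorizes: under $\proba$, the variable $W_T$ is centered normal with variance $T$, density $(2\pi T)^{-1/2}\exp(-x^2/(2T))$, and is independent of $v$, whose law is $P_v$. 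Fubini then turns the expectation into the double integral $\int_{\Pi_v}\int_\reels (\cdots)\,dx\,dP_v(y)$, where all the $v$-measurable quantities $b, u_T, \alpha, \widetilde\alpha, \theta, \widetilde\theta$ are evaluated at $v = y$ and hence acquire the $y$-subscript, following the convention announced before the statement.

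The decisive step is to split the $x$-integral at the point where $Y_T = u_T + W_T$ meets the level $b$. On $\{Y_T > b\}$, i.e. $\{W_T > b - u_T\}$, the path of $Y$ starts strictly below the barrier (since $Y_0 = u_0 + W_0 = 0$ and $b = (g_0 - \mu_0)/\sigma > 0$ a.s. by $\proba(\mu_0 < g_0) = 1$) and is continuous, so $Y_T > b$ forces $\Tau^Y_b \leq T$; hence $\proba(\Tau^Y_b \leq T \mid W_T, v) = 1$ there. Integrating the Gaussian density over $\{W_T > b - u_T\}$ and then over $v$ produces the leading term, which given $v = y$ equals $1 - \phi((y_b - y_{u,T})/\sqrt{T})$; in the display this $v$-integration of the leading term is suppressed and written as $1 - \phi((b - u_T)/\sqrt{T})$. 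On the complementary region $\{Y_T \leq b\}$, i.e. $x \leq b - u_T$, I would substitute the representation of Theorem~\ref{th_approximation_random_oneboundary}, namely $\exp(-\alpha W_T + \tfrac12\int_0^T\theta_s^2\,ds)\,\esp_{\mathbb{Q}}[\mathbf{1}_{\{\Tau^Y_b \leq T\}}\exp(-\widetilde\alpha\widetilde{W}) \mid W_T, v]$, evaluated at $W_T = x$, $v = y$. This yields exactly the inner integrand of~(\ref{eq_proba_random_oneboundary}) and completes the first formula.

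For the explicit version I would further impose assumption~(\ref{approximation_assumption_random_oneboundary}) and invoke Theorem~\ref{th_approximation2_random_oneboundary}, which under that factorization and via the Malmquist-type identity of Lemma~\ref{prop_condproba_random_oneboundary1} replaces the remaining conditional expectation on $\{Y_T \leq b\}$ by $\exp(-2b(b - Y_T)/T)\exp(\widetilde\alpha\int_0^T\widetilde\theta_s\theta_s\,ds)\,\mathcal{L}_N(\widetilde\alpha)$. On the integration region $x \leq b - u_T$ one has $Y_T = u_T + x \leq b$, so $b - Y_T = b - u_T - x$ and the first factor becomes $\exp(-2b(b - u_T - x)/T)$; writing the $v$-dependent constants with their $y$-subscripts gives the integrand of~(\ref{eq_proba_approximation_random_oneboundary}).

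The part I expect to be most delicate is not the algebra but the justification of the two supporting claims: (i) that the conditional crossing probability is identically one on $\{Y_T > b\}$, which rests on path-continuity together with $Y_0 < b$ a.s., and (ii) that Fubini applies to interchange the $x$- and $v$-integrations. The latter is where \textbf{Assumption B} does the work, since the integrability $\esp[\exp(\tfrac12\int_0^T\theta_s^2\,ds)] < \infty$ controls the factor $M_T^{-1}$ underlying the conditional expectation and guarantees that the double integral is absolutely convergent. A secondary bookkeeping point is to keep track of the fact that the upper limit $b - u_T$ of the inner integral and every coefficient in the integrand are themselves functions of $y$, so the substitutions must be made consistently with the $y$-subscript convention.
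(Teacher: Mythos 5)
Your proposal is correct and follows essentially the same route as the paper: integrate the conditional probability from Theorem \ref{th_approximation_random_oneboundary} against the product law of $(W_T,v)$ (using independence from \textbf{Assumption B}), peel off the region $x \geq b-u_T$ where the conditional crossing probability is identically one to produce the $1-\phi$ term, and substitute Theorem \ref{th_approximation2_random_oneboundary} for the second display. Your additional remarks on path-continuity for the $\{Y_T>b\}$ region and on the integrability needed for Fubini are elaborations of steps the paper leaves implicit, not a different argument.
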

\subsection{Two-sided time-varying boundary case}
In this section, we consider the case when the two-sided boundary and the drift are nonrandom time-varying and the variance is nonrandom.

We first give the definition of the set of two-sided boundary functions.
\begin{definition}\label{defboundaryset_nonrandom_twoboundary}
We define the set of two-sided boundary functions as $\mathcal{I} = \mathcal{G} \times \mathcal{G}.$
\end{definition}
 We now give the definition of the FPT to a two-sided boundary.
\begin{definition}\label{defFPT_nonrandom_twoboundary}
We define the FPT of an $\mathbf{F}$-adapted continuous process $Z$ to a two-sided boundary
$(g,h) \in \mathcal{I}$ satisfying
$g_0 \leq Z_0 \leq h_0$ as
\begin{eqnarray}
\label{TgZdef_nonrandom_twoboundary}
\Tau_{g,h}^Z = \inf \{t \in \reels^+ \text{ s.t. } Z_t \geq g_t \text{ or } Z_t \leq h_t \}.
\end{eqnarray}
\end{definition}
 We have that $Z$ is a continuous and 
$\mathbf{F}$-adapted stochastic process and $\inf \{t \in \reels^+ \text{ s.t. } Z_t \geq g_t \text{ or } Z_t \leq h_t \} = \inf \{t \in \reels^+ \text{ s.t. } Z_t \in G\}$ where $G= \{(t,u) \in \reels^+ \times \reels \text{ s.t. } u \geq g_t \text{ or } u \leq h_t\}$ is an open subset of $\reels^2$. Thus, the FPT $\Tau_{g,h}^{Z}$ is an $\mathbf{F}$-stopping time by Theorem I.1.27 (p. 7) in Jacod et al. (\citeyear{JacodLimit2003}). We can rewrite the boundary crossing probability $P_{g,h}^Z$ as the cdf of $\Tau_{g,h}^Z$, i.e., 
\begin{eqnarray}
\label{PgZdef_nonrandom_twoboundary}
P_{g,h}^Z(t)= \proba (\Tau^Z_{g,h} \leq t) \text{ for any } t \geq 0.
\end{eqnarray}
We assume that $g_0 < \mu_0 < h_0$. To apply the Girsanov theorem to the two-sided boundary case, we cannot use two different drifts since the process $Z_t$ is unique in Definition \ref{defFPT_nonrandom_twoboundary}. Thus, we have to restrict the class of boundary functions as we will assume that the deviation of $g$ from its starting value is equal to the sum of the deviation of $h$ from its starting value and a linear term, i.e., there exists $\beta \in \reels$ such that $h_t - h_0 = g_t - g_0 + \beta t$. Thus, we can rewrite the FPT to a two-sided time-varying boundary as an equivalent FPT to a two-sided boundary with one constant boundary and one linear boundary. More specifically, if we define the new drift as $u_t=\frac{\mu_t-\mu_0-g_t+g_0}{\sigma}$, the new process as $Y_t = u_t + W_t$, the new constant boundary as $b=\frac{\mu_0-g_0}{\sigma}$ and the new linear boundary as $c_t=\frac{h_0-\mu_0+\beta t}{\sigma}$, we observe that the FPT (\ref{TgZdef_nonrandom_twoboundary}) may be rewritten as $\Tau_{g,h}^Z = \Tau_{b,c}^Y$.

\begin{proof}[\textbf{Assumption C}]
We assume that $u \neq 0$, i.e., that there exists a $t \in [0,T]$ such that $u_t \neq 0$. We also assume that $u$ is absolutely continuous on $[0,T]$, i.e., there exists a nonrandom function $\theta : [0,T] \rightarrow \reels$ with $u_t=\int_0^t \theta_s ds$. Finally, we assume that $\int_0^T \theta_s^2 ds < \infty$.
\phantom\qedhere
\end{proof}
 By \textbf{Assumption C}, $M$ satisfies Novikov's condition and thus is a positive martingale. We embed this result and its implications on an equivalent probability measure $\mathbb{Q}$ by the Girsanov theorem in the following lemma.

\begin{lemma} \label{lemma_girsanov_nonrandom_twoboundary} Under \textbf{Assumption C}, we have that $M$ is a positive martingale. Thus, we can consider an equivalent probability measure $\mathbb{Q}$ such that the Radon-Nikodym derivative is defined as $\frac{d \mathbb{Q}}{d \proba}=M_T$. Finally, $Y$ is a standard Wiener process under $\mathbb{Q}$.
\end{lemma}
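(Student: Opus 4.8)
The plan is to recognize that this statement is, up to relabeling, a verbatim copy of Lemma~\ref{lemma_girsanov}. Indeed, \textbf{Assumption C} is word-for-word identical to \textbf{Assumption A}, and the drift $u$, the exponential process $M$, and the process $Y=u+W$ are given by exactly the same formulas; the two-sided geometry of the problem (the boundaries $b$ and $c$) plays no role whatsoever in how $M$ or $Y$ are constructed. Consequently I would first remark that the proof of Lemma~\ref{lemma_girsanov} transfers without a single change, and then, for completeness, recall its three ingredients.

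First I would check that $M$ is a positive $\proba$-martingale. Since $\theta$ is a \emph{nonrandom} function and $\int_0^T\theta_s^2\,ds<\infty$ by \textbf{Assumption C}, the integral $\int_0^T\theta_s^2\,ds$ is a finite deterministic constant, so Novikov's condition holds trivially: $\esp_{\proba}\big[\exp(\tfrac12\int_0^T\theta_s^2\,ds)\big]=\exp(\tfrac12\int_0^T\theta_s^2\,ds)<\infty$. Novikov's criterion then makes $M$ a true martingale, and $M$ is strictly positive as the exponential of a real-valued random variable, with $M_0=1$. Second, because $M$ is a martingale with $M_0=1$ we have $\esp_{\proba}[M_T]=1$, so $M_T$ is a bona fide Radon--Nikodym density and $\mathbb{Q}$ defined by $\frac{d\mathbb{Q}}{d\proba}=M_T$ is a probability measure; the strict positivity $M_T>0$ a.s.\ gives the equivalence $\mathbb{Q}\sim\proba$. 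Third, I would invoke the Girsanov theorem with density process $M$ to conclude that $Y$ is a standard Wiener process under $\mathbb{Q}$.

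I do not expect any genuine obstacle here: every step is a direct citation of Novikov's criterion and the Girsanov theorem under hypotheses that are automatic in the nonrandom setting, and the argument coincides with the one already carried out for Lemma~\ref{lemma_girsanov}. The only point deserving a word is that, in contrast with \textbf{Assumption B}, no integrability needs to be imposed by hand: the deterministic finiteness of $\int_0^T\theta_s^2\,ds$ is precisely what Novikov's condition requires, so the verification is immediate.
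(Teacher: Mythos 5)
Your proposal is correct and matches the paper's own (implicit) justification: the paper never writes out a separate proof for this lemma but relies on exactly the observation you make, namely that \textbf{Assumption C} coincides with \textbf{Assumption A}, Novikov's condition is immediate because $\int_0^T\theta_s^2\,ds$ is a finite deterministic constant, and the conclusion then follows from the martingale property of $M$ and the Girsanov theorem. Nothing is missing.
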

 The next proposition reexpresses $\proba (\Tau^Y_{b,c} \leq T | W_T)$ under $\mathbb{Q}$. We define $\overline{W}_t$ as
\begin{eqnarray}
\label{overlineWdef_nonrandom_twoboundary}
\overline{W}_t = \int_0^t \theta_s dW_s.
\end{eqnarray}
\begin{proposition}
\label{th_condproba_nonrandom_twoboundary}
Under \textbf{Assumption C}, we have
\begin{eqnarray}
\label{eq_condproba_nonrandom_twoboundary0}
\proba (\Tau^Y_{b,c} \leq T | W_T) & = & \esp_{\mathbb{Q}} \big[\mathbf{1}_{\{\Tau^Y_{b,c} \leq T \}} M_T^{-1} | W_T \big].
\end{eqnarray}
This can be reexpressed as
\begin{eqnarray}
\label{eq_condproba_nonrandom_twoboundary}
\proba (\Tau^Y_{b,c} \leq T | W_T) & = & \esp_{\mathbb{Q}} \big[ M_T^{-1} \esp_\mathbb{Q} \big[\mathbf{1}_{\{\Tau^Y_{b,c} \leq T \}} | W_T,\overline{W}_T \big] | W_T \big].
\end{eqnarray}
\end{proposition}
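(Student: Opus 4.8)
The plan is to reproduce, at the level of structure, the argument already outlined for the one-sided Proposition~\ref{th_condproba_nonrandom_oneboundary}, replacing the one-sided first-passage time $\Tau^Y_b$ by the two-sided first-passage time $\Tau^Y_{b,c}$. The only new input relative to the one-sided case is that $\mathbf{1}_{\{\Tau^Y_{b,c} \leq T\}}$ be $\mathcal{F}_T$-measurable, and this is already supplied by the discussion preceding the statement: $\Tau^Y_{b,c} = \Tau^Z_{g,h}$ is an $\mathbf{F}$-stopping time by Theorem~I.1.27 in Jacod et al.\ (\citeyear{JacodLimit2003}), so $\{\Tau^Y_{b,c} \leq T\} \in \mathcal{F}_T$ and the change-of-measure machinery applies verbatim.

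First I would invoke Lemma~\ref{lemma_girsanov_nonrandom_twoboundary}, which furnishes the equivalent measure $\mathbb{Q}$ with $\frac{d\mathbb{Q}}{d\proba} = M_T$, together with its immediate consequence $\esp_\proba[X] = \esp_{\mathbb{Q}}[X M_T^{-1}]$ for every $\mathcal{F}_T$-measurable $X$. To prove \eqref{eq_condproba_nonrandom_twoboundary0} I would characterise the left-hand side as the essentially unique $\sigma(W_T)$-measurable random variable satisfying the defining testing identity. Concretely, fix an arbitrary $\sigma(W_T)$-measurable event $E_T$ and apply the change-of-measure identity to $X = \mathbf{1}_{\{\Tau^Y_{b,c} \leq T\}} \mathbf{1}_{E_T}$, obtaining
\begin{equation*}
\esp_\proba\big[\mathbf{1}_{\{\Tau^Y_{b,c} \leq T\}} \mathbf{1}_{E_T}\big] = \esp_{\mathbb{Q}}\big[\mathbf{1}_{\{\Tau^Y_{b,c} \leq T\}} \mathbf{1}_{E_T} M_T^{-1}\big].
\end{equation*}
Since $\mathbf{1}_{E_T}$ is $\sigma(W_T)$-measurable, I would pull it out of the inner $\mathbb{Q}$-conditional expectation given $W_T$, so the right-hand side becomes $\esp_{\mathbb{Q}}[\mathbf{1}_{E_T}\, \esp_{\mathbb{Q}}[\mathbf{1}_{\{\Tau^Y_{b,c} \leq T\}} M_T^{-1} \mid W_T]]$. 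Matching the two sides over all such $E_T$ and invoking uniqueness of the conditional expectation then yields \eqref{eq_condproba_nonrandom_twoboundary0}.

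The reexpression \eqref{eq_condproba_nonrandom_twoboundary} would follow from the tower property under $\mathbb{Q}$ with the finer conditioning field $\sigma(W_T,\overline{W}_T) \supseteq \sigma(W_T)$. Writing $\esp_{\mathbb{Q}}[\mathbf{1}_{\{\Tau^Y_{b,c} \leq T\}} M_T^{-1} \mid W_T] = \esp_{\mathbb{Q}}[\esp_{\mathbb{Q}}[\mathbf{1}_{\{\Tau^Y_{b,c} \leq T\}} M_T^{-1} \mid W_T,\overline{W}_T] \mid W_T]$ and observing that $M_T^{-1} = \exp(-\overline{W}_T + \frac{1}{2}\int_0^T \theta_s^2\, ds)$ is $\sigma(\overline{W}_T)$-measurable, hence $\sigma(W_T,\overline{W}_T)$-measurable, I would factor $M_T^{-1}$ out of the inner conditional expectation to arrive at \eqref{eq_condproba_nonrandom_twoboundary}.

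The step I expect to be the main obstacle is the identification of the $\sigma(W_T)$-measurable conditional expectations in the first equality, because the left-hand conditional expectation is taken under $\proba$ while the right-hand one is taken under $\mathbb{Q}$. The testing identity equates integrals against $\mathbf{1}_{E_T}$ computed under two different measures, so the argument must exploit that $\proba$ and $\mathbb{Q}$ are equivalent and carefully justify the Bayes-type interchange rather than naively cancelling the test function; establishing that the $\sigma(W_T)$-conditioning is compatible with the Radon–Nikodym weight is the delicate point. By contrast, the remaining manipulations—the tower property and the factoring of the $\sigma(\overline{W}_T)$-measurable weight $M_T^{-1}$—are routine and identical to the one-sided case.
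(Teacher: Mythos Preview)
Your proposal is correct and follows essentially the same approach as the paper: test the identity against arbitrary $\sigma(W_T)$-measurable events via the change of measure $\esp_\proba[X]=\esp_{\mathbb{Q}}[XM_T^{-1}]$, and for the reexpression pass through the finer conditioning $\sigma(W_T,\overline{W}_T)$ and pull out the $\sigma(W_T,\overline{W}_T)$-measurable factor $M_T^{-1}$. The only cosmetic difference is that for \eqref{eq_condproba_nonrandom_twoboundary} you invoke the tower property directly on the already-established \eqref{eq_condproba_nonrandom_twoboundary0}, whereas the paper repeats the testing-event argument from scratch; the content is identical, and the paper likewise leaves the Bayes-type interchange you flag at the same informal level.
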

 To obtain a more explicit formula, we elementary decompose the Radon-Nikodym derivative inverse $M_T^{-1}$ as the product of a $\sigma(W_T)$-measurable random variable and a random variable independent from $W_T$. We define the correlation under $\proba$ between $W_T$ and $\overline{W}_T$ as $\rho$, i.e., $\rho = \operatorname{Cor}_\proba(W_T, \overline{W}_T)$. 
\begin{lemma} \label{lemma_representation_nonrandom_twoboundary} Under \textbf{Assumption C}, we have that $\frac{\overline{W}_T}{\sqrt{\int_0^T \theta_s^2 ds}}$ is a standard normal random variable under $\proba$. We can also show that $\rho  = \frac{\int_0^T \theta_s ds}{T\int_0^T \theta_s^2 ds}$. Moreover, there exists a standard normal random variable $\widetilde{W}$ under $\proba$ which is independent of $W_T$ and such that $\overline{W}_T$ when normalized can be reexpressed as
\begin{eqnarray}
\label{def_Wtilde_nonrandom_twoboundary}
\frac{\overline{W}_T}{\sqrt{\int_0^T \theta_s^2 ds}} = \rho \frac{W_T}{\sqrt{T}} + \sqrt{1-\rho^2} \widetilde{W}.
\end{eqnarray}
This can be reexpressed as
\begin{eqnarray}
\label{def_Wtilde0_nonrandom_twoboundary}
\overline{W}_T = \alpha W_T + \widetilde{\alpha} \widetilde{W},
\end{eqnarray}
where $\alpha = \rho\sqrt{T^{-1}\int_0^T \theta_s^2 ds}$ and $\widetilde{\alpha} = \sqrt{(1- \rho^2) \int_0^T \theta_s^2 ds}$. If we define $\widetilde{\theta}_t = \frac{\theta_s - \alpha}{\widetilde{\alpha}}$, we can reexpress $\widetilde{W}$ as
\begin{eqnarray}
\label{def_Wtilde1_nonrandom_twoboundary}
\widetilde{W} & = & \int_0^T \widetilde{\theta}_s dW_s.
\end{eqnarray}
Finally, $\widetilde{W}+\int_0^T \widetilde{\theta}_s \theta_s ds$ is a standard normal variable under $\mathbb{Q}$.
\end{lemma}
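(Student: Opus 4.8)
The plan is to treat the five assertions in turn, all of which reduce to standard properties of Wiener integrals of deterministic integrands together with the Girsanov change of measure supplied by Lemma~\ref{lemma_girsanov_nonrandom_twoboundary}. First I would recall that, since $\theta$ is nonrandom with $\int_0^T \theta_s^2 ds < \infty$ by \textbf{Assumption C}, the Wiener integral $\overline{W}_T = \int_0^T \theta_s dW_s$ is a centered Gaussian variable under $\proba$ with variance $\int_0^T \theta_s^2 ds$ by the It\^o isometry; dividing by $\sqrt{\int_0^T \theta_s^2 ds}$ yields the first claim. For the correlation, writing $W_T = \int_0^T 1\, dW_s$ and applying the isometry to the product gives $\Cov_\proba(W_T, \overline{W}_T) = \int_0^T \theta_s ds$, so that $\rho = \operatorname{Cor}_\proba(W_T,\overline{W}_T)$ is obtained after dividing by the product of standard deviations $\sqrt{T}\sqrt{\int_0^T \theta_s^2 ds}$.

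Next I would construct $\widetilde{W}$ by the usual Gaussian projection. Since $(W_T, \overline{W}_T)$ is jointly centered normal under $\proba$, I set $\widetilde{W} = (1-\rho^2)^{-1/2}\big(\overline{W}_T/\sqrt{\int_0^T \theta_s^2 ds} - \rho\, W_T/\sqrt{T}\big)$, the normalized residual of the normalized $\overline{W}_T$ after regressing on $W_T/\sqrt{T}$. This immediately gives the representation~\eqref{def_Wtilde_nonrandom_twoboundary} and, after multiplying through by $\sqrt{\int_0^T \theta_s^2 ds}$, the form~\eqref{def_Wtilde0_nonrandom_twoboundary} with the stated $\alpha$ and $\widetilde{\alpha}$. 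The pair $(W_T,\widetilde{W})$ is jointly Gaussian, so independence is equivalent to $\Cov_\proba(W_T,\widetilde{W}) = 0$, which follows from the definition and the value of $\rho$; unit variance of $\widetilde{W}$ is built into the normalization.

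Then I would rewrite $\widetilde{W}$ as a single Wiener integral: by linearity, $\widetilde{W} = \widetilde{\alpha}^{-1}(\overline{W}_T - \alpha W_T) = \int_0^T \widetilde{\alpha}^{-1}(\theta_s - \alpha)\, dW_s = \int_0^T \widetilde{\theta}_s\, dW_s$, which is~\eqref{def_Wtilde1_nonrandom_twoboundary}. A short computation, using that the stated formulas reduce to $\alpha = T^{-1}\int_0^T \theta_s ds$, shows $\int_0^T \widetilde{\theta}_s\, ds = 0$ and $\int_0^T \widetilde{\theta}_s^2\, ds = 1$; the former re-confirms the independence just established and the latter confirms via the It\^o isometry that $\widetilde{W}$ is standard normal under $\proba$.

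Finally, for the statement under $\mathbb{Q}$ I would invoke Lemma~\ref{lemma_girsanov_nonrandom_twoboundary}: since $Y_t = \int_0^t \theta_s ds + W_t$ is a standard Wiener process under $\mathbb{Q}$, we have $dW_s = dY_s - \theta_s ds$, whence $\widetilde{W} = \int_0^T \widetilde{\theta}_s\, dY_s - \int_0^T \widetilde{\theta}_s \theta_s\, ds$, so that $\widetilde{W} + \int_0^T \widetilde{\theta}_s \theta_s\, ds = \int_0^T \widetilde{\theta}_s\, dY_s$. As $\widetilde{\theta}$ is deterministic with $\int_0^T \widetilde{\theta}_s^2\, ds = 1$, the It\^o isometry shows this last integral is standard normal under $\mathbb{Q}$, giving the final assertion. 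I expect the only delicate bookkeeping to be the substitution $dW_s = dY_s - \theta_s ds$ and the verification that $\int_0^T \widetilde{\theta}_s^2\, ds = 1$; everything else is routine Gaussian algebra, and the argument is in fact identical to that for Lemma~\ref{lemma_representation}.
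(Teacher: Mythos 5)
Your proposal is correct and follows essentially the same route as the paper's proof: normalize the Wiener integral via the It\^{o} isometry, use the Gaussian projection of $\overline{W}_T$ onto $W_T$ to produce the independent residual $\widetilde{W}$, rewrite it as $\int_0^T \widetilde{\theta}_s \, dW_s$ by linearity, and pass to $\mathbb{Q}$ via Lemma~\ref{lemma_girsanov_nonrandom_twoboundary}. One point of divergence deserves attention: you compute the correlation by dividing the covariance $\int_0^T \theta_s \, ds$ by the product of \emph{standard deviations} $\sqrt{T}\sqrt{\int_0^T\theta_s^2 \, ds}$, which is the correct normalization and the one under which $\alpha$ reduces to the regression coefficient $T^{-1}\int_0^T\theta_s \, ds$ and $\int_0^T\widetilde{\theta}_s^2 \, ds=1$, whereas the lemma (and the paper's proof, which divides by the product of \emph{variances}) states $\rho=\frac{\int_0^T\theta_s \, ds}{T\int_0^T\theta_s^2 \, ds}$; so your argument establishes a different, internally consistent value of $\rho$ rather than the one printed in the statement. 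Your explicit checks that $\int_0^T\widetilde{\theta}_s \, ds=0$ and $\int_0^T\widetilde{\theta}_s^2 \, ds=1$, and the substitution $dW_s=dY_s-\theta_s \, ds$ for the final claim, supply details the paper leaves implicit when it asserts the $\mathbb{Q}$-normality of $\widetilde{W}+\int_0^T\widetilde{\theta}_s\theta_s \, ds$ directly from its expression.
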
 
 Our main result is the next theorem. 
\begin{theorem}
\label{th_approximation_nonrandom_twoboundary}
Under \textbf{Assumption C}, we have
\begin{eqnarray}
\label{eq_nonapproximation_nonrandom_twoboundary}
\proba (\Tau^Y_{b,c} \leq T | W_T) & = & \exp{\big(-\alpha W_T + \frac{1}{2} \int_0^T \theta_s^2 ds\big)} \\ \nonumber && \times \esp_{\mathbb{Q}} \big[\mathbf{1}_{\{\Tau^Y_{b,c} \leq T \}}  \exp{\big(-\widetilde{\alpha} \widetilde{W}\big)} | W_T \big].
\end{eqnarray}
\end{theorem}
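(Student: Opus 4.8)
The plan is to reproduce almost verbatim the argument used for the one-sided case in Theorem~\ref{th_approximation_nonrandom_oneboundary}, since passing from the one-sided event $\{\Tau^Y_b \leq T\}$ to the two-sided event $\{\Tau^Y_{b,c} \leq T\}$ changes nothing in the Girsanov change of measure or in the decomposition of the Radon-Nikodym derivative inverse: the boundary enters only through the indicator, which plays a purely passive role throughout the manipulation.

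First I would start from the non-explicit formula already established in Proposition~\ref{th_condproba_nonrandom_twoboundary},
\begin{eqnarray*}
\proba (\Tau^Y_{b,c} \leq T | W_T) & = & \esp_{\mathbb{Q}} \big[\mathbf{1}_{\{\Tau^Y_{b,c} \leq T \}} M_T^{-1} | W_T \big].
\end{eqnarray*}
Then, by the definition of $M_T$ together with (\ref{overlineWdef_nonrandom_twoboundary}), I would write the Radon-Nikodym derivative inverse as $M_T^{-1} = \exp\big(-\overline{W}_T + \tfrac{1}{2}\int_0^T \theta_s^2 ds\big)$, and substitute the decomposition $\overline{W}_T = \alpha W_T + \widetilde{\alpha}\widetilde{W}$ furnished by Lemma~\ref{lemma_representation_nonrandom_twoboundary}. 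This yields
\begin{eqnarray*}
M_T^{-1} & = & \exp\big(-\alpha W_T + \tfrac{1}{2}\int_0^T \theta_s^2 ds\big)\,\exp\big(-\widetilde{\alpha}\widetilde{W}\big).
\end{eqnarray*}

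The final step is to observe that the factor $\exp\big(-\alpha W_T + \tfrac{1}{2}\int_0^T \theta_s^2 ds\big)$ is $\sigma(W_T)$-measurable, hence can be taken out of the conditional expectation given $W_T$, which leaves precisely the claimed factorization (\ref{eq_nonapproximation_nonrandom_twoboundary}). I do not expect any genuine obstacle here: the only thing that must hold is the representation of Lemma~\ref{lemma_representation_nonrandom_twoboundary}, and once that is granted the measurability of the first factor makes the pull-out immediate. Note that the independence of $\widetilde{W}$ from $W_T$ is not actually needed for this theorem---it becomes relevant only for the stronger factorization assumption used later---so the present statement is a direct consequence of the measurability split alone.
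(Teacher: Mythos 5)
Your proposal is correct and follows essentially the same route as the paper: the paper's proof of Theorem~\ref{th_approximation_nonrandom_twoboundary} likewise invokes Proposition~\ref{th_condproba_nonrandom_twoboundary}, rewrites $M_T^{-1}$ via the decomposition $\overline{W}_T = \alpha W_T + \widetilde{\alpha}\widetilde{W}$ from Lemma~\ref{lemma_representation_nonrandom_twoboundary}, and pulls the $\sigma(W_T)$-measurable factor out of the conditional expectation, exactly as in the one-sided case. Your closing observation that the independence of $\widetilde{W}$ from $W_T$ is not needed at this stage is also accurate.
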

 We first calculate $\mathbb{Q} (\Tau^Y_{b,c} \leq T | W_T)$ whose explicit formula is given in the following lemma. This reexpresses Anderson (\citeyear{anderson1960modification}) (Theorem 4.2, pp. 178-179) under $\mathbb{Q}$, which considers the linear case $\mu_t=\gamma t$, $g_t= a t + b$, $h_t= c t + d$ and $\sigma = 1$ under $\proba$. He obtains that
\begin{eqnarray*}
\proba (\Tau^Z_{g,h} \leq T | W_T = x) = \sum_{j=1}^{\infty} p^Z_{g,h}(j | x) \mathbf{1}_{\{x \in [h_T-\mu_T,g_T-\mu_T]\}} + \mathbf{1}_{\{x \notin [h_T-\mu_T,g_T-\mu_T]\}}.
\end{eqnarray*}
Here, $p^Z_{g,h}(j | x)$ is defined as
\begin{eqnarray*}
p^Z_{g,h}(j | x) &=& \exp{\Big(-\frac{2}{T}(j \delta_0 + h_0)(j \delta_T + (h_T - \mu_T) -x) \Big)} \\&&+ \exp{\Big(-\frac{2j}{T}(j \delta_0 \delta_T+\delta_0((h_T-\mu_T) -x) -\delta_T h_0) \Big)}\\ && + \exp{\Big(-\frac{2}{T}(j \delta_0 - g_0)(j \delta_T - ((g_T - \mu_T) -x)) \Big)} \\&&+ \exp{\Big(-\frac{2j}{T}(j \delta_0 \delta_T-\delta_0((g_T-\mu_T) -x) +\delta_T g_0) \Big)}
\end{eqnarray*}
for any $j\in \naturels_*$, $x \in [h_T-\mu_T,g_T-\mu_T]$ and the difference between $g$ and $h$ is defined as $\delta_t = \delta_t (g,h)=g_t - h_t$ for any $t \in [0,T]$.

\begin{lemma}
\label{prop_condproba_nonrandom_twoboundary1}
Under \textbf{Assumption C}, we have
\begin{eqnarray}
\label{propeq_condproba_nonrandom_twoboundary1}
\mathbb{Q} (\Tau^Y_{b,c} \leq T | W_T) & = &  \sum_{j=1}^{\infty} q^Y_{b,c}(j | Y_T) \mathbf{1}_{\{Y_T \in [c_T,b_T]\}} + \mathbf{1}_{\{Y_T \notin [c_T,b_T]\}}.
\end{eqnarray}
Here, $q^Y_{b,c}(j | x)$ is defined as
\begin{eqnarray*}
q^Y_{b,c}(j | x) &=& \exp{\Big(-\frac{2}{T}(j \delta_0 + c_0)(j \delta_T + (c_T -x) )\Big)} \\&&+ \exp{\Big(-\frac{2j}{T}(j \delta_0 \delta_T+\delta_0(c_T -x) -\delta_T c_0) \Big)}\\ && + \exp{\Big(-\frac{2}{T}(j \delta_0 - b_0)(j \delta_T - (b_T -x)) \Big)} \\&&+ \exp{\Big(-\frac{2j}{T}(j \delta_0 \delta_T-\delta_0((b_T -x) +\delta_T b_0) \Big)},
\end{eqnarray*}
for any $j\in \naturels_*$, $x\in[c_T,b_T]$ and $\delta_t =\delta_t (b,c)$.
\end{lemma}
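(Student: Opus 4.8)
The plan is to obtain \eqref{propeq_condproba_nonrandom_twoboundary1} by transferring Anderson's explicit two-sided formula (restated just before the lemma) to the measure $\mathbb{Q}$, under which $Y$ is a standard Wiener process. First I would invoke Lemma \ref{lemma_girsanov_nonrandom_twoboundary} to conclude that $Y$ is a standard Wiener process under $\mathbb{Q}$. Hence, under $\mathbb{Q}$, $\Tau^Y_{b,c}$ is exactly the first exit time of a driftless Wiener process from the corridor with constant upper boundary $b$ and linear lower boundary $c_t=\frac{h_0-\mu_0+\beta t}{\sigma}$. Both boundaries being affine in $t$, this is precisely the configuration treated by Anderson (\citeyear{anderson1960modification}) (Theorem 4.2), and the whole point is that his formula is a distributional statement about a Wiener process relative to affine boundaries, so it applies verbatim under $\mathbb{Q}$ with $Y$ playing the role of his driving Wiener process and with his drift set to zero.

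Before substituting, I would reconcile the conditioning variable. Since $Y_t=u_t+W_t$ and, under \textbf{Assumption C}, $u_T=\int_0^T\theta_s\,ds$ is a deterministic constant, we have $Y_T=W_T+u_T$ with $u_T$ nonrandom, so $\sigma(W_T)=\sigma(Y_T)$ and therefore $\mathbb{Q}(\Tau^Y_{b,c}\leq T\mid W_T)=\mathbb{Q}(\Tau^Y_{b,c}\leq T\mid Y_T)$. I may thus condition directly on $Y_T$, which is the terminal value of the Wiener process on which Anderson conditions. Specializing his formula with $\mu_t\equiv 0$, upper boundary $g\mapsto b$ and lower boundary $h\mapsto c$, the terminal window $[h_T-\mu_T,\,g_T-\mu_T]$ collapses to $[c_T,\,b_T]$, the difference $\delta_t=g_t-h_t$ becomes $\delta_t=\delta_t(b,c)$, and each of the four exponential terms of $p^Z_{g,h}(j\mid x)$ turns into the matching term of $q^Y_{b,c}(j\mid x)$; evaluating at $x=Y_T$ yields the stated identity.

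The step requiring the most care is the bookkeeping of the boundary orientation in the specialization. Concretely, one must check that $b$ is correctly identified with Anderson's \emph{upper} boundary and $c$ with his \emph{lower} boundary, so that the corridor $[c_T,b_T]$ is nonempty and $Y_0=0$ lies strictly inside it, and that the constant boundary $b$ is admissible as the degenerate zero-slope instance of an affine boundary. Once this identification is pinned down, the correspondence is a purely mechanical term-by-term substitution with no further probabilistic input, since all the analytic content is already contained in Anderson's theorem and in the fact, from Lemma \ref{lemma_girsanov_nonrandom_twoboundary}, that $Y$ is standard Wiener under $\mathbb{Q}$.
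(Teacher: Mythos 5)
Your proposal is correct and follows essentially the same route as the paper: invoke Lemma \ref{lemma_girsanov_nonrandom_twoboundary} so that $Y$ is a standard Wiener process under $\mathbb{Q}$, then specialize Anderson's Theorem 4.2 to the constant/linear boundary pair $(b,c)$ and evaluate at $Y_T$. The paper's proof is terser but identical in substance; your additional care about $\sigma(W_T)=\sigma(Y_T)$ (since $u_T$ is deterministic under \textbf{Assumption C}) and about the upper/lower boundary identification simply makes explicit what the paper leaves implicit.
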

 The next theorem gives a formula based on a strong theoretical assumption (\ref{approximation_assumption_nonrandom_twoboundary}).
\begin{theorem}
\label{th_approximation2_nonrandom_twoboundary}
We assume that \textbf{Assumption C} and the following assumption 
\begin{eqnarray}
\label{approximation_assumption_nonrandom_twoboundary}
\esp_{\mathbb{Q}} \big[\mathbf{1}_{\{\Tau^Y_{b,c} \leq T \}}  \exp{\big(-\widetilde{\alpha} \widetilde{W}\big)} | W_T \big] &=& \esp_{\mathbb{Q}} \big[\mathbf{1}_{\{\Tau^Y_{b,c} \leq T \}}  | W_T \big] \\ \nonumber && \times \esp_{\mathbb{Q}} \big[  \exp{\big(-\widetilde{\alpha} \widetilde{W}\big)} | W_T \big] 
\end{eqnarray}
holds. Then, we have
\begin{eqnarray}
\label{eq_approximation_nonrandom_twoboundary}
\proba (\Tau^Y_{b,c} \leq T | W_T) & = & \exp{\big(-\alpha W_T + \frac{1}{2} \int_0^T \theta_s^2 ds\big)} \\ \nonumber && \Big(\sum_{j=1}^{\infty} q^Y_{b,c}(j | Y_T) \mathbf{1}_{\{Y_T \in [c_T,b_T]\}} \\ &&+ \mathbf{1}_{\{Y_T \notin [c_T,b_T]\}} \Big) \nonumber  \exp{\big(\widetilde{\alpha} \int_0^T \widetilde{\theta}_s \theta_s ds\big)} \mathcal{L}_N(\widetilde{\alpha}).
\end{eqnarray}
\end{theorem}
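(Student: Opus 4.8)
The plan is to assemble three ingredients that are already in place: the representation of Theorem~\ref{th_approximation_nonrandom_twoboundary}, the factorization granted by the strong assumption~(\ref{approximation_assumption_nonrandom_twoboundary}), and the two explicit evaluations supplied by Lemma~\ref{prop_condproba_nonrandom_twoboundary1} and Lemma~\ref{lemma_representation_nonrandom_twoboundary}. The whole argument runs parallel to the one-sided proof of Theorem~\ref{th_approximation2_nonrandom_oneboundary}; the only substantive change is that the single-boundary conditional probability gets replaced by its two-sided Anderson-type analogue, while the $\exp(-\widetilde{\alpha}\widetilde{W})$ factor is treated identically.

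First I would invoke Theorem~\ref{th_approximation_nonrandom_twoboundary} to write $\proba(\Tau^Y_{b,c}\leq T\,|\,W_T)$ as the deterministic prefactor $\exp(-\alpha W_T + \tfrac{1}{2}\int_0^T\theta_s^2\,ds)$ multiplied by $\esp_{\mathbb{Q}}[\mathbf{1}_{\{\Tau^Y_{b,c}\leq T\}}\exp(-\widetilde{\alpha}\widetilde{W})\,|\,W_T]$. Applying assumption~(\ref{approximation_assumption_nonrandom_twoboundary}) splits this conditional expectation into $\esp_{\mathbb{Q}}[\mathbf{1}_{\{\Tau^Y_{b,c}\leq T\}}\,|\,W_T]$ times $\esp_{\mathbb{Q}}[\exp(-\widetilde{\alpha}\widetilde{W})\,|\,W_T]$. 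The first factor is exactly $\mathbb{Q}(\Tau^Y_{b,c}\leq T\,|\,W_T)$, and since $Y_T=u_T+W_T$ with $u_T$ nonrandom, conditioning on $W_T$ coincides with conditioning on $Y_T$; Lemma~\ref{prop_condproba_nonrandom_twoboundary1} then substitutes the explicit series $\sum_{j\geq 1}q^Y_{b,c}(j\,|\,Y_T)\mathbf{1}_{\{Y_T\in[c_T,b_T]\}}+\mathbf{1}_{\{Y_T\notin[c_T,b_T]\}}$, which is the middle line of~(\ref{eq_approximation_nonrandom_twoboundary}).

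The substantive step is evaluating the second factor $\esp_{\mathbb{Q}}[\exp(-\widetilde{\alpha}\widetilde{W})\,|\,W_T]$. Using Lemma~\ref{lemma_representation_nonrandom_twoboundary} I would set $N:=\widetilde{W}+\int_0^T\widetilde{\theta}_s\theta_s\,ds=\int_0^T\widetilde{\theta}_s\,dY_s$, which that lemma identifies as standard normal under $\mathbb{Q}$, and rewrite $\exp(-\widetilde{\alpha}\widetilde{W})=\exp(\widetilde{\alpha}\int_0^T\widetilde{\theta}_s\theta_s\,ds)\exp(-\widetilde{\alpha}N)$. Pulling out the deterministic piece, the conditional expectation reduces to $\exp(\widetilde{\alpha}\int_0^T\widetilde{\theta}_s\theta_s\,ds)\,\esp_{\mathbb{Q}}[\exp(-\widetilde{\alpha}N)\,|\,W_T]$. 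Here the key observation is that $N$ and $W_T$ are independent under $\mathbb{Q}$: both are Wiener integrals against the $\mathbb{Q}$-Brownian motion $Y$, hence jointly Gaussian, and their covariance $\int_0^T\widetilde{\theta}_s\,ds$ vanishes because $\alpha=u_T/T$ forces $\int_0^T(\theta_s-\alpha)\,ds=0$. The conditional expectation therefore drops its conditioning, and since the Laplace transform of a standard normal is measure-free, $\esp_{\mathbb{Q}}[\exp(-\widetilde{\alpha}N)]=\mathcal{L}_N(\widetilde{\alpha})$. Multiplying the prefactor, the boundary series, and the resulting $\exp(\widetilde{\alpha}\int_0^T\widetilde{\theta}_s\theta_s\,ds)\mathcal{L}_N(\widetilde{\alpha})$ reproduces~(\ref{eq_approximation_nonrandom_twoboundary}).

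The hard part, such as it is, is confirming the $\mathbb{Q}$-independence of $N$ and $W_T$ together with the normalization $\int_0^T\widetilde{\theta}_s^2\,ds=1$; but both collapse to the algebraic identities $\alpha=u_T/T$ and $\widetilde{\alpha}^2=\int_0^T\theta_s^2\,ds-u_T^2/T$ that are already bundled into Lemma~\ref{lemma_representation_nonrandom_twoboundary}. Consequently no estimate beyond the one-sided case is needed, and the two-sided statement follows by the same chain of substitutions, with Lemma~\ref{prop_condproba_nonrandom_twoboundary1} furnishing the only new explicit input.
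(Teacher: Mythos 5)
Your proposal is correct and follows essentially the same route as the paper's proof: apply Theorem~\ref{th_approximation_nonrandom_twoboundary}, factor via assumption~(\ref{approximation_assumption_nonrandom_twoboundary}), substitute Lemma~\ref{prop_condproba_nonrandom_twoboundary1} for the first factor, and reduce the second factor to $\exp\big(\widetilde{\alpha}\int_0^T\widetilde{\theta}_s\theta_s\,ds\big)\mathcal{L}_N(\widetilde{\alpha})$ using the standard normality of $\widetilde{W}+\int_0^T\widetilde{\theta}_s\theta_s\,ds$ under $\mathbb{Q}$. Your only deviation is a welcome refinement: you justify dropping the conditioning on $W_T$ by computing the vanishing $\mathbb{Q}$-covariance $\int_0^T\widetilde{\theta}_s\,ds=0$ directly, whereas the paper simply cites the independence of $\widetilde{W}$ and $W_T$ from Lemma~\ref{lemma_representation_nonrandom_twoboundary} (which is stated there under $\proba$).
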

 Finally, we get $P_{b,c}^Y(T)$ in the next corollary by integrating $\proba (\Tau^Y_{b,c} \leq T | W_T)$ with respect to the value of $W_T$.
\begin{corollary}
\label{th_proba_nonrandom_twoboundary1}
Under \textbf{Assumption C}, we have
\begin{eqnarray}
\nonumber 
P_{b,c}^Y(T) & = & 1 - \phi (\frac{b_T-u_T}{\sqrt{T}}) + \phi (\frac{c_T-u_T}{\sqrt{T}}) \\&& \nonumber + \int_{c_T-u_T}^{b_T-u_T} \frac{1}{\sqrt{2 \pi T}} \exp{\Big(-\frac{x^2}{2T}\Big)} \exp{\big(-\alpha x + \frac{1}{2} \int_0^T \theta_s^2 ds\big)} \\ \label{eq_proba_nonrandom_twoboundary} &&\times\esp_{\mathbb{Q}} \big[\mathbf{1}_{\{\Tau^Y_{b,c} \leq T \}}  \exp{\big(-\widetilde{\alpha} \widetilde{W}\big)} | W_T =x \big] dx.
\end{eqnarray}
If we further assume (\ref{approximation_assumption_nonrandom_twoboundary}), we have
\begin{eqnarray}
\nonumber 
P_{b,c}^Y(T) & = & 1 - \phi (\frac{b_T-u_T}{\sqrt{T}}) + \phi (\frac{c_T-u_T}{\sqrt{T}}) \\&& \nonumber + \int_{c_T-u_T}^{b_T-u_T} \frac{1}{\sqrt{2 \pi T}} \exp{\Big(-\frac{x^2}{2T}\Big)} \exp{\big(-\alpha x + \frac{1}{2} \int_0^T \theta_s^2 ds\big)}  \\ & & \nonumber \times \Big(\sum_{j=1}^{\infty} q^{x+u_T}_{b,c}(j | x+u_T) \mathbf{1}_{\{x \in [c_T-u_T,b_T-u_T]\}} \\ && \nonumber + \mathbf{1}_{\{x \notin [c_T-u_T,b_T-u_T]\}}\Big) \\&& \label{eq_proba_approximation_nonrandom_twoboundary}\times \exp{\big(\widetilde{\alpha} \int_0^T \widetilde{\theta}_s \theta_s ds\big)} \mathcal{L}_N(\widetilde{\alpha}) dx.
\end{eqnarray}
\end{corollary}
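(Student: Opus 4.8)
The plan is to obtain $P_{b,c}^Y(T)=\proba(\Tau_{b,c}^Y\leq T)$ by conditioning on the terminal value $W_T$ and integrating, exactly as in the one-sided Corollary~\ref{th_proba_nonrandom_oneboundary1}; the only genuinely new feature is that the two boundaries generate two Gaussian tail terms instead of one. First I would apply the law of total probability together with the fact that $W_T$ is $N(0,T)$ under $\proba$, writing
\[
P_{b,c}^Y(T)=\esp_\proba\big[\proba(\Tau_{b,c}^Y\leq T\mid W_T)\big]=\int_{-\infty}^{\infty}\proba(\Tau_{b,c}^Y\leq T\mid W_T=x)\,\frac{1}{\sqrt{2\pi T}}\exp\Big(-\frac{x^2}{2T}\Big)\,dx.
\]

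Next I would split the line of integration according to whether the terminal value $Y_T=u_T+W_T$ lies inside or outside the strip $[c_T,b_T]$. Since $W_T=x$ forces $Y_T=x+u_T$, the terminal value is outside the strip exactly when $x\notin[c_T-u_T,b_T-u_T]$. On this outer region the first passage has necessarily already occurred: $Y$ is continuous and starts strictly inside the strip (the transformation is arranged so that $Y_0$ lies between $c_0$ and $b_0$), so a terminal value beyond one of the boundaries forces a crossing by the intermediate value theorem, whence $\proba(\Tau_{b,c}^Y\leq T\mid W_T=x)=1$ there. This is precisely the role played by the indicator $\mathbf 1_{\{Y_T\notin[c_T,b_T]\}}$ in Lemma~\ref{prop_condproba_nonrandom_twoboundary1} and Theorem~\ref{th_approximation2_nonrandom_twoboundary}. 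Integrating the $N(0,T)$ density over the two outer tails $\{x>b_T-u_T\}$ and $\{x<c_T-u_T\}$ and expressing the result through $\phi$ then yields $1-\phi\big(\tfrac{b_T-u_T}{\sqrt T}\big)+\phi\big(\tfrac{c_T-u_T}{\sqrt T}\big)$, the first line of the claimed formula.

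It then remains to treat the inner region $x\in[c_T-u_T,b_T-u_T]$, where I would simply substitute the representation of the conditional probability supplied by Theorem~\ref{th_approximation_nonrandom_twoboundary}, namely $\proba(\Tau_{b,c}^Y\leq T\mid W_T)=\exp\big(-\alpha W_T+\tfrac12\int_0^T\theta_s^2\,ds\big)\,\esp_{\mathbb{Q}}\big[\mathbf 1_{\{\Tau_{b,c}^Y\leq T\}}\exp(-\widetilde\alpha\widetilde W)\mid W_T\big]$, evaluated at $W_T=x$, into the integrand. Together with the tail computation this produces the first displayed formula. For the second formula I would instead insert the fully explicit expression of Theorem~\ref{th_approximation2_nonrandom_twoboundary}, which under assumption~(\ref{approximation_assumption_nonrandom_twoboundary}) replaces the $\mathbb{Q}$-conditional expectation on the inner region by $\big(\sum_{j=1}^\infty q_{b,c}^Y(j\mid x+u_T)\big)\exp\big(\widetilde\alpha\int_0^T\widetilde\theta_s\theta_s\,ds\big)\mathcal{L}_N(\widetilde\alpha)$, using $Y_T=x+u_T$ on that region; this is where the Anderson-type series from Lemma~\ref{prop_condproba_nonrandom_twoboundary1} enters.

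Once Theorems~\ref{th_approximation_nonrandom_twoboundary} and~\ref{th_approximation2_nonrandom_twoboundary} are in hand the computation is essentially bookkeeping, so the only real care is in the region split. The main (mild) obstacle is thus the orientation of the two boundaries: one must confirm which of $c_T,b_T$ is the lower and which the upper endpoint of the transformed strip, check that $Y_0$ indeed lies strictly inside it so that the ``outside implies already crossed'' argument is valid, and verify that the two complementary Gaussian tails carry the correct signs to match the $\phi$ terms. No analytic difficulty beyond this transcription is expected.
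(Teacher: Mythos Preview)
Your proposal is correct and follows essentially the same approach as the paper: condition on $W_T$, split the real line into the two tail regions where the conditional crossing probability equals one (yielding the two $\phi$ terms), and on the inner strip substitute Theorem~\ref{th_approximation_nonrandom_twoboundary} for the first formula and Theorem~\ref{th_approximation2_nonrandom_twoboundary} for the second. The paper's proof is exactly this transcription, with no additional ingredients.
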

 In the particular case when the boundaries and the drift are linear, and the standard deviation is equal to unity, Corollary \ref{th_proba_nonrandom_twoboundary1} reduces to Anderson (\citeyear{anderson1960modification}) (Theorem 4.3, p. 180).

\section{Proofs}
Our proofs rely on an elementary application of the Girsanov theorem. 
\subsection{One-sided time-varying boundary case}
We start with the proofs in the case when the one-sided boundary and the drift are nonrandom time-varying and the variance is nonrandom. 

The proof of Proposition \ref{th_condproba_nonrandom_oneboundary} is based on Lemma \ref{lemma_girsanov} and its consequence in the particular case $X= \mathbf{1}_{\{\Tau^Y_b \leq T \}} \mathbf{1}_{E_T}$ in which $E_T$ is a $\sigma(W_T)$-measurable event. 
\begin{proof}[Proof of Proposition \ref{th_condproba_nonrandom_oneboundary}] 
By definition of the conditional probability, Equation (\ref{eq_condproba_nonrandom_oneboundary0}) can be rewritten formally as 
\begin{eqnarray}
\label{eq_condexp_nonrandom_oneboundary0}
\esp_\proba \big[\mathbf{1}_{\{\Tau^Y_b \leq T \}} | W_T \big]& =& \esp_{\mathbb{Q}} \big[\mathbf{1}_{\{\Tau^Y_b \leq T \}} M_T^{-1} | W_T \big].
\end{eqnarray}
For any $\sigma(W_T)$-measurable event $E_T$, we can use a change of probability in the expectation by Lemma \ref{lemma_girsanov} along with \textbf{Assumption A} and we obtain that
\begin{eqnarray}
\label{eq_condexp_nonrandom_oneboundary00}
\esp_\proba \big[\mathbf{1}_{\{\Tau^Y_b \leq T \}} \mathbf{1}_{E_T}\big]& =& \esp_{\mathbb{Q}} \big[\mathbf{1}_{\{\Tau^Y_b \leq T \}} M_T^{-1} \mathbf{1}_{E_T} \big].
\end{eqnarray}
We can deduce Equation (\ref{eq_condexp_nonrandom_oneboundary0}) from Equation (\ref{eq_condexp_nonrandom_oneboundary00}) by definition of the conditional expectation. By definition of the conditional probability, Equation (\ref{eq_condproba_nonrandom_oneboundary}) can be rewritten formally as 
\begin{eqnarray}
\label{eq_condexp_nonrandom_oneboundary}
\esp_\proba \big[\mathbf{1}_{\{\Tau^Y_b \leq T \}} | W_T \big]& =& \esp_{\mathbb{Q}} \big[ M_T^{-1} \esp_\mathbb{Q} \big[\mathbf{1}_{\{\Tau^Y_b \leq T \}} | W_T,\overline{W}_T \big] | W_T \big].
\end{eqnarray}
By definition of the conditional expectation, we can deduce what follows. If we can show that for any $E_T$ which is $\sigma(W_T)$-measurable that 
\begin{eqnarray}
\label{eq_condexpevent_nonrandom_oneboundary}
\esp_\proba \big[\mathbf{1}_{\{\Tau^Y_b \leq T \}} \mathbf{1}_{E_T} \big] \\ \nonumber = \esp_\proba \Big[ \esp_{\mathbb{Q}} \big[ M_T^{-1} \esp_\mathbb{Q} \big[\mathbf{1}_{\{\Tau^Y_b \leq T \}} | W_T,\overline{W}_T \big] | W_T \big] \mathbf{1}_{E_T} \Big],
\end{eqnarray}
then Equation (\ref{eq_condexp_nonrandom_oneboundary}) holds. Let $E_T$ be a $\sigma(W_T)$-measurable event. By Lemma \ref{lemma_girsanov} along with \textbf{Assumption A}, we can use a change of probability in the expectation and we obtain that
\begin{eqnarray}
\label{proof0}
\esp_\proba \big[\mathbf{1}_{\{\Tau^Y_b \leq T \}} \mathbf{1}_{E_T} \big] & = & \esp_{\mathbb{Q}} \big[\mathbf{1}_{\{\Tau^Y_b \leq T \}} \mathbf{1}_{E_T} M_T^{-1}\big].
\end{eqnarray}
Then, we have by the law of total expectation that
\begin{eqnarray}
\label{proof1}
\esp_{\mathbb{Q}} \big[\mathbf{1}_{\{\Tau^Y_b \leq T \}} \mathbf{1}_{E_T} M_T^{-1}\big]  & = & \esp_{\mathbb{Q}} \big[ \esp_{\mathbb{Q}} \big[\mathbf{1}_{\{\Tau^Y_b \leq T \}} \mathbf{1}_{E_T} M_T^{-1} | W_T, \overline{W}_T \big] \big].
\end{eqnarray}
Since $\mathbf{1}_{E_T}$ and $M_T^{-1}$ are $\sigma(W_T,\overline{W}_T)$-measurable random variables, we can pull them out of the conditional expectation and deduce that
\begin{eqnarray}
\label{proof2}
& & \esp_{\mathbb{Q}} \big[ \esp_{\mathbb{Q}} \big[\mathbf{1}_{\{\Tau^Y_b \leq T \}} \mathbf{1}_{E_T} M_T^{-1} | W_T, \overline{W}_T\big] \big] \\ \nonumber & = & \esp_{\mathbb{Q}} \big[ \mathbf{1}_{E_T} M_T^{-1}\esp_{\mathbb{Q}} \big[\mathbf{1}_{\{\Tau^Y_b \leq T \}} | W_T, \overline{W}_T \big] \big].
\end{eqnarray}
If we use Equations (\ref{proof0})-(\ref{proof1})-(\ref{proof2}), we can deduce that Equation (\ref{eq_condexpevent_nonrandom_oneboundary}) holds.
\end{proof}
 In what follows, we give the proof of Lemma \ref{lemma_representation}. It is based on the fact that $(W_T,\overline{W}_T)$ 
is a centered normal random vector under $\proba$ so that we can rewrite $\overline{W}_T$ as $\overline{W}_T = \alpha W_T + \widetilde{\alpha} \widetilde{W}$.  
\begin{proof}[Proof of Lemma \ref{lemma_representation}] 
By \textbf{Assumption A}, we have that $\overline{W}_T$ is well-defined and $\int_0^T \theta_s^2 ds < \infty$ thus we can deduce that $\frac{\overline{W}_T}{\sqrt{\int_0^T \theta_s^2 ds}}$ is a standard normal random variable under $\proba$. Since $(W_T,\overline{W}_T)$ is a centered normal random vector under $\proba$, there exists a standard normal random variable $\widetilde{W}$ under $\proba$ which is independent of $W_T$ and such that Equation (\ref{def_Wtilde}) holds. Then, we can calculate that
\begin{eqnarray*}
\rho & = & \operatorname{Cor}_\proba(W_T,\int_0^T \theta_s dW_s)\\
& = & \frac{\operatorname{Cov}_\proba(W_T,\int_0^T \theta_s dW_s)}{\operatorname{Var}_\proba (W_T) \operatorname{Var}_\proba (\int_0^T \theta_s dW_s)}\\
& = & \frac{\int_0^T \theta_s ds}{T\int_0^T \theta_s^2 ds}.
\end{eqnarray*} 
Here, we use the definition of $\rho$ and Equation (\ref{overlineWdef}) in the first equality, the fact that $\theta \neq 0$ by \textbf{Assumption A} in the second equality, and the It\^{o} isometry in the last equality. Equation (\ref{def_Wtilde0}) can be deduced directly from Equation (\ref{def_Wtilde}). Moreover, we can reexpress $\widetilde{W}$ as
\begin{eqnarray*}
\widetilde{W} & = &\frac{1}{\widetilde{\alpha}} (\overline{W}_T - \alpha W_T)\\
& = & \int_0^T \frac{\theta_s - \alpha}{\widetilde{\alpha}} dW_s\\
&=& \int_0^T \widetilde{\theta}_s dW_s.
\end{eqnarray*}
Here, we use Equation (\ref{def_Wtilde0}) in the first equality, Equation (\ref{overlineWdef}) in the second equality and the definition of $\widetilde{\theta}_t$ in the last equality. Finally, we can deduce that $\widetilde{W}+\int_0^T \widetilde{\theta}_s \theta_s ds$ is a standard normal variable under $\mathbb{Q}$ by its expression (\ref{def_Wtilde1}) and since by Lemma \ref{lemma_girsanov} along with \textbf{Assumption A}, $Y$ is a Wiener process under $\mathbb{Q}$.
\end{proof}
 We provide now the proof of Theorem \ref{th_approximation_nonrandom_oneboundary} which is based on Lemma \ref{lemma_representation}. 
\begin{proof}[Proof of Theorem \ref{th_approximation_nonrandom_oneboundary}] We can reexpress $M_T$ as
\begin{eqnarray}
\nonumber M_T & = & \exp{\big(\int_0^T \theta_s dW_s - \frac{1}{2} \int_0^T \theta_s^2 ds\big)}\\ \nonumber
& = & \exp{\big(\overline{W}_T - \frac{1}{2} \int_0^T \theta_s^2 ds\big)}\\ \nonumber
& = & \exp{\big(\alpha W_T + \widetilde{\alpha} \widetilde{W} - \frac{1}{2} \int_0^T \theta_s^2 ds\big)}\\
\label{M_representation}
& = & \exp{\big(\alpha W_T - \frac{1}{2} \int_0^T \theta_s^2 ds\big)} \exp{\big(\widetilde{\alpha} \widetilde{W}\big)}.
\end{eqnarray}
Here, we use Equation (\ref{defM}) in the first equality, Equation (\ref{overlineWdef}) in the second equality, Equation (\ref{def_Wtilde0}) from Lemma \ref{lemma_representation} in the third equality and algebraic manipulation in the last equality. Then, we have
\begin{eqnarray}
\nonumber\proba (\Tau^Y_b \leq T | W_T) & = & \esp_{\mathbb{Q}} \big[\mathbf{1}_{\{\Tau^Y_b \leq T \}} M_T^{-1} | W_T \big]\\\nonumber & = & \esp_{\mathbb{Q}} \big[\mathbf{1}_{\{\Tau^Y_b \leq T \}} \exp{\big(-\alpha W_T + \frac{1}{2} \int_0^T \theta_s^2 ds\big)} \\ \nonumber & & \times \exp{\big(-\widetilde{\alpha} \widetilde{W}\big)} | W_T \big]\\\nonumber & = & \exp{\big(-\alpha W_T + \frac{1}{2} \int_0^T \theta_s^2 ds\big)} \\ \nonumber & & \times \esp_{\mathbb{Q}} \big[\mathbf{1}_{\{\Tau^Y_b \leq T \}}  \exp{\big(-\widetilde{\alpha} \widetilde{W}\big)} | W_T \big].
\end{eqnarray}
Here, we use Equation (\ref{eq_condproba_nonrandom_oneboundary0}) from Proposition \ref{th_condproba_nonrandom_oneboundary}
along with \textbf{Assumption A} in the first equality, Equation (\ref{M_representation}) in the second equality, and the fact that $W_T$ is a $\sigma(W_T)$-measurable random variable in the third equality. Thus, we have shown Equation (\ref{eq_nonapproximation_nonrandom_oneboundary}).
\end{proof}
 We now give the proof which reexpresses Malmquist (\citeyear{malmquist1954certain}) (Theorem 1, p. 526) under $\mathbb{Q}$.
\begin{proof}[Proof of Lemma \ref{prop_condproba_nonrandom_oneboundary1}] 
By definition of the conditional probability, Equation (\ref{propeq_condproba_nonrandom_oneboundary1}) can be rewritten formally as 
\begin{eqnarray}
\label{eq_condexp_nonrandom_oneboundary1}
\esp_\mathbb{Q} \big[\mathbf{1}_{\{\Tau^Y_b \leq T \}} | W_T \big]& =& \exp{\Big(-\frac{2b(b-Y_T)}{T}\Big)}\mathbf{1}_{\{Y_T \leq b\}} + \mathbf{1}_{\{Y_T > b\}}.
\end{eqnarray}
By Lemma \ref{lemma_girsanov} along with \textbf{Assumption A}, $Y$ is a Wiener process under $\mathbb{Q}$. Then, we have by Malmquist (\citeyear{malmquist1954certain}) (Theorem 1, p. 526) that Equation (\ref{eq_condexp_nonrandom_oneboundary1}) holds. 
\end{proof}
 We provide now the proof of Theorem \ref{th_approximation2_nonrandom_oneboundary}, which is based on Lemma \ref{prop_condproba_nonrandom_oneboundary1}. 
\begin{proof}[Proof of Theorem \ref{th_approximation2_nonrandom_oneboundary}] We have
\begin{eqnarray}
\nonumber \esp_{\mathbb{Q}} \big[\mathbf{1}_{\{\Tau^Y_b \leq T \}}  \exp{\big(-\widetilde{\alpha} \widetilde{W}\big)} | W_T \big]  & = & \esp_{\mathbb{Q}} \big[\mathbf{1}_{\{\Tau^Y_b \leq T \}}  | W_T \big] \esp_{\mathbb{Q}} \big[  \exp{\big(-\widetilde{\alpha} \widetilde{W}\big)} | W_T \big]\\ \nonumber& = & \Big(\exp{\Big(-\frac{2b(b-Y_T)}{T}\Big)}\mathbf{1}_{\{Y_T \leq b\}} + \mathbf{1}_{\{Y_T > b\}}\Big)\\ \label{proof_approximation} && \times \esp_{\mathbb{Q}} \big[  \exp{\big(-\widetilde{\alpha} \widetilde{W}\big)} | W_T \big],
\end{eqnarray}
where we use Assumption (\ref{approximation_assumption}) in the first equality, Equation (\ref{propeq_condproba_nonrandom_oneboundary1}) from Lemma \ref{prop_condproba_nonrandom_oneboundary1} along with \textbf{Assumption A} in the second equality. Finally, we have
\begin{eqnarray}
\nonumber \esp_{\mathbb{Q}} \big[  \exp{\big(-\widetilde{\alpha} \widetilde{W}\big)} | W_T \big] & = & \esp_{\mathbb{Q}} \big[  \exp{\big(-\widetilde{\alpha} \widetilde{W}\big)} \big]\\ \nonumber & = & \exp{\big(\widetilde{\alpha} \int_0^T \widetilde{\theta}_s \theta_s ds\big)} \\ \nonumber & & \times \esp_{\mathbb{Q}} \big[  \exp{\big(-\widetilde{\alpha} \big(\widetilde{W}+\int_0^T \widetilde{\theta}_s \theta_s ds\big)\big)} \big]\\ \nonumber & = & \exp{\big(\widetilde{\alpha} \int_0^T \widetilde{\theta}_s \theta_s ds\big)} \esp_{\proba} \big[  \exp{\big(-\widetilde{\alpha} N \big)} \big]\\ \label{proof_approximation0} & = & \exp{\big(\widetilde{\alpha} \int_0^T \widetilde{\theta}_s \theta_s ds\big)} \mathcal{L}_N(\widetilde{\alpha}).
\end{eqnarray}
Here, we use the fact that $\widetilde{W}$ is independent from $W_T$ in the first equality, algebraic manipulation in the second equality, the fact that $\widetilde{W}+\int_0^T \widetilde{\theta}_s \theta_s ds$ is a standard normal variable under $\mathbb{Q}$ by Lemma \ref{lemma_representation} along with \textbf{Assumption A} in the third equality, and Equation (\ref{def_laplace_transform}) in the last equality. We can deduce Equation (\ref{eq_approximation_nonrandom_oneboundary}) from Equations (\ref{eq_nonapproximation_nonrandom_oneboundary}), (\ref{proof_approximation}) and (\ref{proof_approximation0}).
\end{proof}
 Finally, the proof of Corollary \ref{th_proba_nonrandom_oneboundary1} follows the steps of Equations (3) in Wang et al. (\citeyear{wang1997boundary}) (p. 55).  
\begin{proof}[Proof of Corollary \ref{th_proba_nonrandom_oneboundary1}] 
We can calculate that
\begin{eqnarray*}
P_b^Y(T) & = & \int_{-\infty}^{\infty} \proba (\Tau^Y_b \leq T | W_T = x) \frac{1}{\sqrt{2 \pi T}} \exp{\Big(-\frac{x^2}{2T}\Big)}  dx\\ & = & 1 - \phi (\frac{b-u_T}{\sqrt{T}}) \\ & & \nonumber+ \int_{-\infty}^{b-u_T} \proba (\Tau^Y_b \leq T | W_T = x) \frac{1}{\sqrt{2 \pi T}} \exp{\Big(-\frac{x^2}{2T}\Big)}  dx\\ & = & 1 - \phi (\frac{b-u_T}{\sqrt{T}}) \\ & & \nonumber + \int_{-\infty}^{b-u_T} \frac{1}{\sqrt{2 \pi T}} \exp{\Big(-\frac{x^2}{2T}\Big)} \exp{\big(-\alpha x + \frac{1}{2} \int_0^T \theta_s^2 ds\big)} \\ &&\times\esp_{\mathbb{Q}} \big[\mathbf{1}_{\{\Tau^Y_b \leq T \}}  \exp{\big(-\widetilde{\alpha} \widetilde{W}\big)} | W_T =x \big] dx.
\end{eqnarray*}
Here, we use Equation (\ref{PgZdef}) and regular conditional probability in the first equality, the fact that $\proba (\Tau^Y_b \leq T | W_T = x)=1$ for any $x \geq b-u_T$ in the second equality, and Equation (\ref{eq_nonapproximation_nonrandom_oneboundary}) in the third equality. We have thus shown Equation (\ref{eq_proba_nonrandom_oneboundary}). Equation (\ref{eq_proba_approximation_nonrandom_oneboundary}) can be shown following the same first two equalities and using Assumption (\ref{eq_approximation_nonrandom_oneboundary}) in the third equality.
\end{proof}

\subsection{One-sided stochastic boundary process case}
We continue with the proofs in the case when the one-sided boundary and the drift are stochastic processes and the variance is random. 

The elementary idea in the proofs of this section is to condition by both $W_T$ and $v$, i.e., to derive results of the form $\proba (\Tau^Y_b \leq T | W_T,v)$. The proof of Proposition \ref{th_condproba_random_oneboundary} is based on Lemma \ref{lemma_girsanov_random_oneboundary} and its consequence in the particular case $X= \mathbf{1}_{\{\Tau^Y_b \leq T \}} \mathbf{1}_{E_T}$, in which $E_T$ is a $\sigma(W_T,v)$-measurable event.
\begin{proof}[Proof of Proposition \ref{th_condproba_random_oneboundary}] 
By definition of the conditional probability, Equation (\ref{eq_condproba_random_oneboundary0}) can be rewritten formally as 
\begin{eqnarray}
\label{eq_condexp_random_oneboundary0}
\esp_\proba \big[\mathbf{1}_{\{\Tau^Y_b \leq T \}} | W_T,v \big]& =& \esp_{\mathbb{Q}} \big[\mathbf{1}_{\{\Tau^Y_b \leq T \}} M_T^{-1} | W_T ,v\big].
\end{eqnarray}
For any $\sigma(W_T,v)$-measurable event $E_T$, we can use a change of probability in the expectation by Lemma \ref{lemma_girsanov_random_oneboundary} along with \textbf{Assumption B} and we obtain that
\begin{eqnarray}
\label{eq_condexp_random_oneboundary00}
\esp_\proba \big[\mathbf{1}_{\{\Tau^Y_b \leq T \}} \mathbf{1}_{E_T}\big]& =& \esp_{\mathbb{Q}} \big[\mathbf{1}_{\{\Tau^Y_b \leq T \}} M_T^{-1} \mathbf{1}_{E_T} \big].
\end{eqnarray}
We can deduce Equation (\ref{eq_condexp_random_oneboundary0}) from Equation (\ref{eq_condexp_random_oneboundary00}) by definition of the conditional expectation. By definition of the conditional probability, Equation (\ref{eq_condproba_random_oneboundary}) can be rewritten formally as 
\begin{eqnarray}
\label{eq_condexp_random_oneboundary}
\esp_\proba \big[\mathbf{1}_{\{\Tau^Y_b \leq T \}} | W_T ,v\big]\\ \nonumber = \esp_{\mathbb{Q}} \big[ M_T^{-1} \esp_\mathbb{Q} \big[\mathbf{1}_{\{\Tau^Y_b \leq T \}} | W_T,\overline{W}_T,v \big] | W_T,v \big].
\end{eqnarray}
By definition of the conditional expectation, we can deduce what follows. If we can show that for any $E_T$ which is $\sigma(W_T,v)$-measurable that 
\begin{eqnarray}
\label{eq_condexpevent_random_oneboundary}
\esp_\proba \big[\mathbf{1}_{\{\Tau^Y_b \leq T \}} \mathbf{1}_{E_T} \big] \\ \nonumber = \esp_\proba \Big[ \esp_{\mathbb{Q}} \big[ M_T^{-1} \esp_\mathbb{Q} \big[\mathbf{1}_{\{\Tau^Y_b \leq T \}} | W_T,\overline{W}_T,v \big] | W_T,v \big] \mathbf{1}_{E_T} \Big],
\end{eqnarray}
then Equation (\ref{eq_condexp_random_oneboundary}) holds. Let $E_T$ be a $\sigma(W_T,v)$-measurable event. By Lemma \ref{lemma_girsanov_random_oneboundary} along with \textbf{Assumption B}, we can use a change of probability in the expectation and we obtain that
\begin{eqnarray}
\label{proof0_random_oneboundary}
\esp_\proba \big[\mathbf{1}_{\{\Tau^Y_b \leq T \}} \mathbf{1}_{E_T} \big] & = & \esp_{\mathbb{Q}} \big[\mathbf{1}_{\{\Tau^Y_b \leq T \}} \mathbf{1}_{E_T} M_T^{-1}\big].
\end{eqnarray}
Then, we have by the law of total expectation that
\begin{eqnarray}
\label{proof1_random_oneboundary}
\esp_{\mathbb{Q}} \big[\mathbf{1}_{\{\Tau^Y_b \leq T \}} \mathbf{1}_{E_T} M_T^{-1}\big]   = \\ \nonumber \esp_{\mathbb{Q}} \big[ \esp_{\mathbb{Q}} \big[\mathbf{1}_{\{\Tau^Y_b \leq T \}} \mathbf{1}_{E_T} M_T^{-1} | W_T, \overline{W}_T ,v\big] \big].
\end{eqnarray}
Since $\mathbf{1}_{E_T}$ and $M_T^{-1}$ are $\sigma(W_T, \overline{W}_T ,v)$-measurable random variables, we can pull them out of the conditional expectation and deduce that
\begin{eqnarray}
\label{proof2_random_oneboundary}
\esp_{\mathbb{Q}} \big[ \esp_{\mathbb{Q}} \big[\mathbf{1}_{\{\Tau^Y_b \leq T \}} \mathbf{1}_{E_T} M_T^{-1} | W_T, \overline{W}_T,v\big] \big] \\ \nonumber =  \esp_{\mathbb{Q}} \big[ \mathbf{1}_{E_T} M_T^{-1}\esp_{\mathbb{Q}} \big[\mathbf{1}_{\{\Tau^Y_b \leq T \}} | W_T, \overline{W}_T,v \big] \big].
\end{eqnarray}
If we use Equations (\ref{proof0_random_oneboundary})-(\ref{proof1_random_oneboundary})-(\ref{proof2_random_oneboundary}), we can deduce that Equation (\ref{eq_condexpevent_random_oneboundary}) holds.
\end{proof}
 In what follows, we give the proof of Lemma \ref{lemma_representation_random_oneboundary}. It is based on the fact that $(W_T,\frac{\overline{W}_T}{\sqrt{\int_0^T \theta_s^2 ds}})$ 
is a centered normal random vector under $\proba$. 
\begin{proof}[Proof of Lemma \ref{lemma_representation_random_oneboundary}] 
By \textbf{Assumption B}, we can deduce that $ 0 < \int_0^T \theta_s^2 ds < \infty$ a.s.. Thus, we can normalize $\overline{W}_T$ by $\sqrt{\int_0^T \theta_s^2 ds}$ a.s. and we have that $\frac{\overline{W}_T}{\sqrt{\int_0^T \theta_s^2 ds}}$ is a mixed normal random variable a.s. by definition. We have that its conditional mean under $\proba$ is a.s. equal to
\begin{eqnarray}
\nonumber
\esp_\proba \Big[\frac{\int_0^T \theta_s dW_s}{\sqrt{\int_0^T \theta_s^2 ds}}\Big| v \Big] \nonumber
&=& \frac{1}{\sqrt{\int_0^T \theta_s^2 ds}}\esp_\proba \Big[\int_0^T \theta_s dW_s \Big| v \Big]\\ \label{proof_mean_random_oneboundary} &=& 0.
\end{eqnarray}
Here, we use the fact that $\frac{1}{\sqrt{\int_0^T \theta_s^2 ds}}$ is $\sigma(v)$-measurable in the first equality, and the fact that $\int_0^T \theta_s dW_s$ is a.s. a martingale since $ \int_0^T \theta_s^2 ds < \infty$ a.s. in the second equality. We also have that its conditional variance under $\proba$ is a.s. equal to
\begin{eqnarray}
\nonumber
\operatorname{Var}_\proba \Big(\frac{\int_0^T \theta_s dW_s}{\sqrt{\int_0^T \theta_s^2 ds}}\Big| v\Big) &=& \esp_\proba \Big[\Big(\frac{\int_0^T \theta_s dW_s}{\sqrt{\int_0^T \theta_s^2 ds}}\Big)^2\Big| v \Big]\\\nonumber
&=& \frac{1}{\int_0^T \theta_s^2 ds}\esp_\proba \Big[\Big(\int_0^T \theta_s dW_s\Big)^2\Big| v \Big]\\ \label{proof_variance_random_oneboundary} &=& 1.
\end{eqnarray}
Here, we use Equation (\ref{proof_mean_random_oneboundary}) in the first equality, the fact that $\frac{1}{\sqrt{\int_0^T \theta_s^2 ds}}$ is $\sigma(v)$-measurable in the second equality, and the It\^{o} isometry in the third equality. Since its conditional mean and conditional variance are nonrandom, we obtain that its mean under $\proba$ is equal to $\esp_\proba \Big[\frac{\int_0^T \theta_s dW_s}{\sqrt{\int_0^T \theta_s^2 ds}}\Big]
= \esp_\proba \Big[\esp_\proba \Big[\frac{\int_0^T \theta_s dW_s}{\sqrt{\int_0^T \theta_s^2 ds}}\Big| v \Big]\Big]= 0$ by the law of total expectation and Equation (\ref{proof_mean_random_oneboundary}), and similarly that its variance is equal to 1 by the law of total expectation and Equation (\ref{proof_variance_random_oneboundary}).
Thus, we have that $\frac{\overline{W}_T}{\sqrt{\int_0^T \theta_s^2 ds}}$ is a standard normal random variable under $\proba$. Since $(W_T,\frac{\overline{W}_T}{\sqrt{\int_0^T \theta_s^2 ds}})$ is a centered normal random vector under $\proba$, there exists a standard normal random variable $\widetilde{W}$ under $\proba$ which is independent of $W_T$ and such that Equation (\ref{def_Wtilde_random_oneboundary}) holds. Then, we can calculate that the covariance between $W_T$ and $\frac{\overline{W}_T}{\sqrt{\int_0^T \theta_s^2 ds}}$ under $\proba$ is equal to
\begin{eqnarray}
\nonumber
\operatorname{Cov}_\proba \Big(W_T,\frac{\overline{W}_T}{\sqrt{\int_0^T \theta_s^2 ds}} \Big) &=& \operatorname{Cov}_\proba \Big(W_T,\frac{\int_0^T \theta_s dW_s}{\sqrt{\int_0^T \theta_s^2 ds}} \Big)\\ \nonumber&=& \esp_\proba \Big[W_T\frac{\int_0^T \theta_s dW_s}{\sqrt{\int_0^T \theta_s^2 ds}} \Big]\\ \nonumber&=& \esp_\proba \Big[\esp \Big[W_T\frac{\int_0^T \theta_s dW_s}{\sqrt{\int_0^T \theta_s^2 ds}} \Big| v \Big]\Big]\\\nonumber &=& \esp_\proba \Big[\frac{1}{\sqrt{\int_0^T \theta_s^2 ds}}\esp \Big[W_T\int_0^T \theta_s dW_s \Big| v \Big]\Big]\\ \label{proof_cov_random_oneboundary} &=& \esp_\proba \Big[\frac{\int_0^T \theta_s ds }{\sqrt{\int_0^T \theta_s^2 ds}}\Big].
\end{eqnarray}
Here, we use Equation (\ref{overlineWdef}) in the first equality, Equation (\ref{proof_mean_random_oneboundary}) in the second equality, the law of total expectation in the third equality, the fact that $\frac{1}{\sqrt{\int_0^T \theta_s^2 ds}}$ is $\sigma(v)$-measurable in the fourth equality, and the It\^{o} isometry in the last equality. Now, we can calculate that the correlation between $W_T$ and $\frac{\overline{W}_T}{\sqrt{\int_0^T \theta_s^2 ds}}$ under $\proba$ is equal to
\begin{eqnarray*}
\rho & = & \operatorname{Cor}_\proba\Big(W_T,\frac{\int_0^T \theta_s dW_s}{\sqrt{\int_0^T \theta_s^2 ds}}\Big)\\
& = & \frac{\operatorname{Cov}_\proba\Big(W_T,\frac{\int_0^T \theta_s dW_s}{\sqrt{\int_0^T \theta_s^2 ds}}\Big)}{\operatorname{Var}_\proba (W_T) \operatorname{Var}_\proba \Big(\frac{\int_0^T \theta_s dW_s}{\sqrt{\int_0^T \theta_s^2 ds}}\Big)}\\
& = & \frac{1}{T}\esp_\proba \Big[\frac{\int_0^T \theta_s ds }{\sqrt{\int_0^T \theta_s^2 ds}}\Big].
\end{eqnarray*} 
Here, we use the definition of $\rho$ and Equation (\ref{overlineWdef}) in the first equality, and Equations (\ref{proof_variance_random_oneboundary}) and (\ref{proof_cov_random_oneboundary}) in the last equality. Equation (\ref{def_Wtilde0_random_oneboundary}) can be deduced directly from Equation (\ref{def_Wtilde_random_oneboundary}). Moreover, we can reexpress $\widetilde{W}$ as
\begin{eqnarray*}
\widetilde{W} & = &\frac{1}{\widetilde{\alpha}} (\overline{W}_T - \alpha W_T)\\
& = & \int_0^T \frac{\theta_s - \alpha}{\widetilde{\alpha}} dW_s\\
&=& \int_0^T \widetilde{\theta}_s dW_s,
\end{eqnarray*}
where we use Equation (\ref{def_Wtilde0_random_oneboundary}) in the first equality, Equation (\ref{overlineWdef}) in the second equality and the definition of $\widetilde{\theta}_t$ in the last equality. Moreover, we can deduce that $\widetilde{W}+\int_0^T \widetilde{\theta}_s \theta_s ds$ is a standard normal variable under $\mathbb{Q}$. This is due to its expression (\ref{def_Wtilde1_random_oneboundary}) and since by Lemma \ref{lemma_girsanov_random_oneboundary} along with \textbf{Assumption B}, $Y$ is a Wiener process under $\mathbb{Q}$. Finally, $\mathcal{D}(\widetilde{W}+\int_0^T \widetilde{\theta}_s \theta_s ds | v)$ is standard normal under $\mathbb{Q}$ by Equation (\ref{def_Wtilde1_random_oneboundary}).
\end{proof}
 We provide now the proof of Theorem \ref{th_approximation_random_oneboundary}, which is based on Lemma \ref{lemma_representation_random_oneboundary}.
\begin{proof}[Proof of Theorem \ref{th_approximation_random_oneboundary}] We can reexpress $M_T$ as
\begin{eqnarray}
\nonumber M_T & = & \exp{\big(\int_0^T \theta_s dW_s - \frac{1}{2} \int_0^T \theta_s^2 ds\big)}\\ \nonumber
& = & \exp{\big(\overline{W}_T - \frac{1}{2} \int_0^T \theta_s^2 ds\big)}\\ \nonumber
& = & \exp{\big(\alpha W_T + \widetilde{\alpha} \widetilde{W} - \frac{1}{2} \int_0^T \theta_s^2 ds\big)}\\
\label{M_representation}
& = & \exp{\big(\alpha W_T - \frac{1}{2} \int_0^T \theta_s^2 ds\big)} \exp{\big(\widetilde{\alpha} \widetilde{W}\big)}.
\end{eqnarray}
Here, we use Equation (\ref{defM}) in the first equality, Equation (\ref{overlineWdef}) in the second equality, Equation (\ref{def_Wtilde0_random_oneboundary}) from Lemma \ref{lemma_representation_random_oneboundary} in the third equality, and algebraic manipulation in the last equality. Then, we have
\begin{eqnarray}
\nonumber\proba (\Tau^Y_b \leq T | W_T,v) & = & \esp_{\mathbb{Q}} \big[\mathbf{1}_{\{\Tau^Y_b \leq T \}} M_T^{-1} | W_T,v \big]\\\nonumber & = & \esp_{\mathbb{Q}} \big[\mathbf{1}_{\{\Tau^Y_b \leq T \}} \exp{\big(-\alpha W_T + \frac{1}{2} \int_0^T \theta_s^2 ds\big)} \\ && \nonumber \times \exp{\big(-\widetilde{\alpha} \widetilde{W}\big)} | W_T,v \big]\\\nonumber & = & \exp{\big(-\alpha W_T + \frac{1}{2} \int_0^T \theta_s^2 ds\big)} \\ && \nonumber \times\esp_{\mathbb{Q}} \big[\mathbf{1}_{\{\Tau^Y_b \leq T \}}  \exp{\big(-\widetilde{\alpha} \widetilde{W}\big)} | W_T,v \big].
\end{eqnarray}
Here, we use Equation (\ref{eq_condproba_random_oneboundary0}) from Proposition \ref{th_condproba_random_oneboundary}
along with \textbf{Assumption B} in the first equality, Equation (\ref{M_representation}) in the second equality, the fact that $W_T$ and $\theta_t$ for any $t \in [0,T]$ are $\sigma(W_T,v)$-measurable random variables in the third equality. Thus, we have shown Equation (\ref{eq_nonapproximation_random_oneboundary}).
\end{proof}
 We now give the proof of Lemma \ref{prop_condproba_random_oneboundary1}.
\begin{proof}[Proof of Lemma \ref{prop_condproba_random_oneboundary1}] 
By definition of the conditional probability, Equation (\ref{propeq_condproba_random_oneboundary1}) can be rewritten formally as 
\begin{eqnarray}
\label{eq_condexp_random_oneboundary1}
\esp_\mathbb{Q} \big[\mathbf{1}_{\{\Tau^Y_b \leq T \}} | W_T,v \big]& =& \exp{\Big(-\frac{2b(b-Y_T)}{T}\Big)}\mathbf{1}_{\{Y_T \leq b\}} + \mathbf{1}_{\{Y_T > b\}}.
\end{eqnarray}
By Lemma \ref{lemma_girsanov_random_oneboundary} along with \textbf{Assumption B}, $Y$ is a Wiener process under $\mathbb{Q}$. Then, we have by \cite{malmquist1954certain} (Theorem 1, p. 526) that Equation (\ref{eq_condexp_random_oneboundary1}) holds. 
\end{proof}
 We provide now the proof of Theorem \ref{th_approximation2_random_oneboundary}, which is based on Lemma \ref{prop_condproba_random_oneboundary1}.
\begin{proof}[Proof of Theorem \ref{th_approximation2_random_oneboundary}] We have
\begin{eqnarray}
\nonumber \esp_{\mathbb{Q}} \big[\mathbf{1}_{\{\Tau^Y_b \leq T \}}  \exp{\big(-\widetilde{\alpha} \widetilde{W}\big)} | W_T,v \big]  & = & \esp_{\mathbb{Q}} \big[\mathbf{1}_{\{\Tau^Y_b \leq T \}}  | W_T,v \big] \\ && \nonumber \times \esp_{\mathbb{Q}} \big[  \exp{\big(-\widetilde{\alpha} \widetilde{W}\big)} | W_T,v \big]\\ \nonumber& = & \Big(\exp{\Big(-\frac{2b(b-Y_T)}{T}\Big)}\mathbf{1}_{\{Y_T \leq b\}} \\ && \nonumber + \mathbf{1}_{\{Y_T > b\}}\Big)\\ && \label{proof_approximation_random_oneboundary} \times \esp_{\mathbb{Q}} \big[  \exp{\big(-\widetilde{\alpha} \widetilde{W}\big)} | W_T,v \big],
\end{eqnarray}
where we use Assumption (\ref{approximation_assumption_random_oneboundary}) in the first equality, Equation (\ref{propeq_condproba_random_oneboundary1}) from Lemma \ref{prop_condproba_random_oneboundary1} along with \textbf{Assumption B} in the second equality. Finally, we have
\begin{eqnarray}
\nonumber \esp_{\mathbb{Q}} \big[  \exp{\big(-\widetilde{\alpha} \widetilde{W}\big)} | W_T,v \big] & = & \esp_{\mathbb{Q}} \big[  \exp{\big(-\widetilde{\alpha} \widetilde{W}\big)} | v\big]\\ \nonumber & = & \exp{\big(\widetilde{\alpha} \int_0^T \widetilde{\theta}_s \theta_s ds\big)} \\ && \nonumber \times\esp_{\mathbb{Q}} \big[  \exp{\big(-\widetilde{\alpha} \big(\widetilde{W}+\int_0^T \widetilde{\theta}_s \theta_s ds\big)\big)} | v \big]\\ \nonumber & = & \exp{\big(\widetilde{\alpha} \int_0^T \widetilde{\theta}_s \theta_s ds\big)} \\ && \nonumber \times\esp_{\mathbb{Q}} \big[  \exp{\big(-\widetilde{\alpha} \big(\widetilde{W}+\int_0^T \widetilde{\theta}_s \theta_s ds\big)\big)} \big]\\ \nonumber & = & \exp{\big(\widetilde{\alpha} \int_0^T \widetilde{\theta}_s \theta_s ds\big)} \esp_{\proba} \big[  \exp{\big(-\widetilde{\alpha} N \big)} \big]\\ \label{proof_approximation0_random_oneboundary} & = & \exp{\big(\widetilde{\alpha} \int_0^T \widetilde{\theta}_s \theta_s ds\big)} \mathcal{L}_N(\widetilde{\alpha}).
\end{eqnarray}
Here, we use the fact that $\widetilde{W}$ is independent from $W_T$ in the first equality, the fact that $\theta_t$ and $\widetilde{\theta}_t$ for any $t \in [0,T]$ are $\sigma(v)$-measurable random variables in the second equality, the fact that $\mathcal{D}(\widetilde{W}+\int_0^T \widetilde{\theta}_s \theta_s ds | v)$ is standard normal under $\mathbb{Q}$ by Lemma \ref{lemma_representation_random_oneboundary} along with \textbf{Assumption B} in the third equality, the fact that $\widetilde{W}+\int_0^T \widetilde{\theta}_s \theta_s ds$ is a standard normal variable under $\mathbb{Q}$ by Lemma \ref{lemma_representation_random_oneboundary} along with \textbf{Assumption B} in the fourth equality, and Equation (\ref{def_laplace_transform}) in the last equality. We can deduce Equation (\ref{eq_approximation_random_oneboundary}) from Equations (\ref{eq_nonapproximation_random_oneboundary}), (\ref{proof_approximation_random_oneboundary}) and (\ref{proof_approximation0_random_oneboundary}).
\end{proof}
 Finally, we get $P_b^Y(T)$ in the next corollary, by integrating $\proba (\Tau^Y_b \leq T | W_T,v)$ with respect to the value of $(W_T,v)$.
\begin{proof}[Proof of Corollary \ref{th_proba_random_oneboundary1}] 
We can calculate that
\begin{eqnarray*}
P_b^Y(T) & = & \int_{-\infty}^{\infty} \int_{\Pi_v} \proba (\Tau^Y_b \leq T | W_T = x,v=y) \frac{1}{\sqrt{2 \pi T}} \exp{\Big(-\frac{x^2}{2T}\Big)}  dx dP_v(y)\\ & = & 1 - \phi (\frac{b-u_T}{\sqrt{T}}) \\ & & + \int_{-\infty}^{b-u_T} \int_{\Pi_v} \proba (\Tau^Y_b \leq T | W_T = x,v=y) \\ & & \nonumber \times \frac{1}{\sqrt{2 \pi T}} \exp{\Big(-\frac{x^2}{2T}\Big)} dx dP_v(y)\\ & = & 1 - \phi (\frac{b-u_T}{\sqrt{T}}) + \\ & & \nonumber \int_{-\infty}^{b-u_T} \int_{\Pi_v} \frac{1}{\sqrt{2 \pi T}} \exp{\Big(-\frac{x^2}{2T}\Big)} \exp{\big(-y_\alpha x + \frac{1}{2} \int_0^T y_{\theta,s}^2 ds\big)} \\  &&\times \esp_{\mathbb{Q}} \big[\mathbf{1}_{\{\Tau^Y_b \leq T \}}  \exp{\big(-\widetilde{\alpha} \widetilde{W}\big)} | W_T =x,v=y \big] dx dP_v(y).
\end{eqnarray*}
Here, we use Equation (\ref{PgZdef_random_oneboundary}), regular conditional probability and the fact that $W_T$ and $v$ are independent in the first equality, the fact that $\proba (\Tau^Y_b \leq T | W_T = x)=1$ for any $x \geq b-u_T$ in the second equality, and Equation (\ref{eq_nonapproximation_random_oneboundary}) in the third equality. We have thus shown Equation (\ref{eq_proba_random_oneboundary}). Equation (\ref{eq_proba_approximation_random_oneboundary}) can be shown following the same first two equalities and using Equation (\ref{eq_approximation_random_oneboundary}) in the third equality.
\end{proof}

\subsection{Two-sided time-varying boundary case}
In this section, we consider the proofs in the case when the two-sided boundary and the drift are nonrandom time-varying and the variance is nonrandom.

The proof of Proposition \ref{th_condproba_nonrandom_twoboundary} is based on Lemma \ref{lemma_girsanov_nonrandom_twoboundary}. 
\begin{proof}[Proof of Proposition \ref{th_condproba_nonrandom_twoboundary}] 
By definition of the conditional probability, Equation (\ref{eq_condproba_nonrandom_twoboundary0}) can be rewritten formally as 
\begin{eqnarray}
\label{eq_condexp_nonrandom_twoboundary0}
\esp_\proba \big[\mathbf{1}_{\{\Tau^Y_{b,c} \leq T \}} | W_T \big]& =& \esp_{\mathbb{Q}} \big[\mathbf{1}_{\{\Tau^Y_{b,c} \leq T \}} M_T^{-1} | W_T \big].
\end{eqnarray}
For any $\sigma(W_T)$-measurable event $E_T$, we can use a change of probability in the expectation by Lemma \ref{lemma_girsanov_nonrandom_twoboundary} along with \textbf{Assumption C} and we obtain that
\begin{eqnarray}
\label{eq_condexp_nonrandom_twoboundary00}
\esp_\proba \big[\mathbf{1}_{\{\Tau^Y_{b,c} \leq T \}} \mathbf{1}_{E_T}\big]& =& \esp_{\mathbb{Q}} \big[\mathbf{1}_{\{\Tau^Y_{b,c} \leq T \}} M_T^{-1} \mathbf{1}_{E_T} \big].
\end{eqnarray}
We can deduce Equation (\ref{eq_condexp_nonrandom_twoboundary0}) from Equation (\ref{eq_condexp_nonrandom_twoboundary00}) by definition of the conditional expectation. By definition of the conditional probability, Equation (\ref{eq_condproba_nonrandom_twoboundary}) can be rewritten formally as 
\begin{eqnarray}
\label{eq_condexp_nonrandom_twoboundary}
\esp_\proba \big[\mathbf{1}_{\{\Tau^Y_{b,c} \leq T \}} | W_T \big]& =& \esp_{\mathbb{Q}} \big[ M_T^{-1} \esp_\mathbb{Q} \big[\mathbf{1}_{\{\Tau^Y_{b,c} \leq T \}} | W_T,\overline{W}_T \big] | W_T \big].
\end{eqnarray}
By definition of the conditional expectation, we can deduce what follows. If we can show that for any $E_T$ which is $\sigma(W_T)$-measurable that 
\begin{eqnarray}
\label{eq_condexpevent_nonrandom_twoboundary}
\esp_\proba \big[\mathbf{1}_{\{\Tau^Y_{b,c} \leq T \}} \mathbf{1}_{E_T} \big] \\ \nonumber = \esp_\proba \Big[ \esp_{\mathbb{Q}} \big[ M_T^{-1} \esp_\mathbb{Q} \big[\mathbf{1}_{\{\Tau^Y_{b,c} \leq T \}} | W_T,\overline{W}_T \big] | W_T \big] \mathbf{1}_{E_T} \Big],
\end{eqnarray}
then Equation (\ref{eq_condexp_nonrandom_twoboundary}) holds. Let $E_T$ be a $\sigma(W_T)$-measurable event. By Lemma \ref{lemma_girsanov_nonrandom_twoboundary} along with \textbf{Assumption C}, we obtain that
\begin{eqnarray}
\label{proof0_nonrandom_twoboundary}
\esp_\proba \big[\mathbf{1}_{\{\Tau^Y_{b,c} \leq T \}} \mathbf{1}_{E_T} \big] & = & \esp_{\mathbb{Q}} \big[\mathbf{1}_{\{\Tau^Y_{b,c} \leq T \}} \mathbf{1}_{E_T} M_T^{-1}\big].
\end{eqnarray}
Then we have by the law of total expectation that
\begin{eqnarray}
\label{proof1_nonrandom_twoboundary}
\esp_{\mathbb{Q}} \big[\mathbf{1}_{\{\Tau^Y_{b,c} \leq T \}} \mathbf{1}_{E_T} M_T^{-1}\big]   \\ \nonumber =  \esp_{\mathbb{Q}} \big[ \esp_{\mathbb{Q}} \big[\mathbf{1}_{\{\Tau^Y_{b,c} \leq T \}} \mathbf{1}_{E_T} M_T^{-1} | W_T, \overline{W}_T \big] \big].
\end{eqnarray}
Since $\mathbf{1}_{E_T}$ and $M_T^{-1}$ are $\sigma(W_T,\overline{W}_T)$-measurable random variables, we can pull them out of the conditional expectation and deduce that
\begin{eqnarray}
\label{proof2_nonrandom_twoboundary}
& & \esp_{\mathbb{Q}} \big[ \esp_{\mathbb{Q}} \big[\mathbf{1}_{\{\Tau^Y_{b,c} \leq T \}} \mathbf{1}_{E_T} M_T^{-1} | W_T, \overline{W}_T\big] \big] \\ \nonumber & = & \esp_{\mathbb{Q}} \big[ \mathbf{1}_{E_T} M_T^{-1}\esp_{\mathbb{Q}} \big[\mathbf{1}_{\{\Tau^Y_{b,c} \leq T \}} | W_T, \overline{W}_T \big] \big].
\end{eqnarray}
If we use Equations (\ref{proof0_nonrandom_twoboundary})-(\ref{proof1_nonrandom_twoboundary})-(\ref{proof2_nonrandom_twoboundary}), we can deduce that Equation (\ref{eq_condexpevent_nonrandom_twoboundary}) holds.
\end{proof}
 In what follows, we give the proof of Lemma \ref{lemma_representation_nonrandom_twoboundary}.  
\begin{proof}[Proof of Lemma \ref{lemma_representation_nonrandom_twoboundary}] 
By \textbf{Assumption C}, we can deduce that $\frac{\overline{W}_T}{\sqrt{\int_0^T \theta_s^2 ds}}$ is a standard normal random variable under $\proba$. Since $(W_T,\overline{W}_T)$ is a centered normal random vector under $\proba$, there exists a standard normal random variable $\widetilde{W}$ under $\proba$ which is independent of $W_T$ and such that Equation (\ref{def_Wtilde_nonrandom_twoboundary}) holds. Using the same arguments from the proof of Lemma \ref{lemma_representation}, we can calculate that 
\begin{eqnarray*}
\rho & = & \frac{\int_0^T \theta_s ds}{T\int_0^T \theta_s^2 ds}.
\end{eqnarray*} 
Moreover, we can reexpress $\widetilde{W}$ as
\begin{eqnarray*}
\widetilde{W} & = & \int_0^T \widetilde{\theta}_s dW_s.
\end{eqnarray*}
Finally, we can deduce that $\widetilde{W}+\int_0^T \widetilde{\theta}_s \theta_s ds$ is a standard normal variable under $\mathbb{Q}$. This is due to its expression (\ref{def_Wtilde1_nonrandom_twoboundary}) and since by Lemma \ref{lemma_girsanov_nonrandom_twoboundary} along with \textbf{Assumption C}, $Y$ is a Wiener process under $\mathbb{Q}$.
\end{proof}
 We provide now the proof of Theorem \ref{th_approximation_nonrandom_twoboundary}, which is based on Lemma \ref{lemma_representation_nonrandom_twoboundary}. 
\begin{proof}[Proof of Theorem \ref{th_approximation_nonrandom_twoboundary}] By the same arguments from the proof of Theorem \ref{th_approximation_nonrandom_oneboundary}, we can reexpress $M_T$ as
\begin{eqnarray*}
 M_T 
& = & \exp{\big(\alpha W_T - \frac{1}{2} \int_0^T \theta_s^2 ds\big)} \exp{\big(\widetilde{\alpha} \widetilde{W}\big)}.
\end{eqnarray*}
Then, we have
\begin{eqnarray}
\nonumber\proba (\Tau^Y_{b,c} \leq T | W_T) & = & \exp{\big(-\alpha W_T + \frac{1}{2} \int_0^T \theta_s^2 ds\big)} \\ \nonumber & & \times \esp_{\mathbb{Q}} \big[\mathbf{1}_{\{\Tau^Y_{b,c} \leq T \}}  \exp{\big(-\widetilde{\alpha} \widetilde{W}\big)} | W_T \big].
\end{eqnarray}
Thus, we have shown Equation (\ref{eq_nonapproximation_nonrandom_twoboundary}).
\end{proof}
 We now give the proof which reexpresses Anderson (\citeyear{anderson1960modification}) (Theorem 4.2, pp. 178-179) under $\mathbb{Q}$. 
\begin{proof}[Proof of Lemma \ref{prop_condproba_nonrandom_twoboundary1}] 
By definition of the conditional probability, Equation (\ref{propeq_condproba_nonrandom_twoboundary1}) can be rewritten formally as 
\begin{eqnarray}
\label{eq_condexp_nonrandom_twoboundary1}
\esp_\mathbb{Q} \big[\mathbf{1}_{\{\Tau^Y_b \leq T \}} | W_T \big]& =& \sum_{j=1}^{\infty} q^Y_{b,c}(j | Y_T) \mathbf{1}_{\{Y_T \in [c_T,b_T]\}} + \mathbf{1}_{\{Y_T \notin [c_T,b_T]\}}.
\end{eqnarray}
By Lemma \ref{lemma_girsanov_nonrandom_twoboundary} along with \textbf{Assumption C}, $Y$ is a Wiener process under $\mathbb{Q}$. Then, we have by Anderson (\citeyear{anderson1960modification}) (Theorem 4.2, pp. 178-179) that Equation (\ref{eq_condexp_nonrandom_twoboundary1}) holds. 
\end{proof}
 We provide now the proof of Theorem \ref{th_approximation2_nonrandom_twoboundary}, which is based on Lemma \ref{prop_condproba_nonrandom_twoboundary1}. 
\begin{proof}[Proof of Theorem \ref{th_approximation2_nonrandom_twoboundary}] 
We have
\begin{eqnarray}
\nonumber \esp_{\mathbb{Q}} \big[\mathbf{1}_{\{\Tau^Y_{b,c} \leq T \}}  \exp{\big(-\widetilde{\alpha} \widetilde{W}\big)} | W_T \big]  & = & \esp_{\mathbb{Q}} \big[\mathbf{1}_{\{\Tau^Y_{b,c} \leq T \}}  | W_T \big] \esp_{\mathbb{Q}} \big[  \exp{\big(-\widetilde{\alpha} \widetilde{W}\big)} | W_T \big]\\ \label{proof_approximation_nonrandom_twoboundary} & = & \Big(\sum_{j=1}^{\infty} q^Y_{b,c}(j | Y_T) \mathbf{1}_{\{Y_T \in [c_T,b_T]\}} \\ \nonumber && + \mathbf{1}_{\{Y_T \notin [c_T,b_T]\}}\Big) \esp_{\mathbb{Q}} \big[  \exp{\big(-\widetilde{\alpha} \widetilde{W}\big)} | W_T \big],
\end{eqnarray}
where we use Assumption (\ref{approximation_assumption_nonrandom_twoboundary}) in the first equality, and Equation (\ref{propeq_condproba_nonrandom_twoboundary1}) from Lemma \ref{prop_condproba_nonrandom_twoboundary1} along with \textbf{Assumption C} in the second equality. Finally, we have
\begin{eqnarray}
\nonumber \esp_{\mathbb{Q}} \big[  \exp{\big(-\widetilde{\alpha} \widetilde{W}\big)} | W_T \big] & = & \esp_{\mathbb{Q}} \big[  \exp{\big(-\widetilde{\alpha} \widetilde{W}\big)} \big]\\ \nonumber & = & \exp{\big(\widetilde{\alpha} \int_0^T \widetilde{\theta}_s \theta_s ds\big)} \\ \nonumber & & \times \esp_{\mathbb{Q}} \big[  \exp{\big(-\widetilde{\alpha} \big(\widetilde{W}+\int_0^T \widetilde{\theta}_s \theta_s ds\big)\big)} \big]\\ \nonumber & = & \exp{\big(\widetilde{\alpha} \int_0^T \widetilde{\theta}_s \theta_s ds\big)} \esp_{\proba} \big[  \exp{\big(-\widetilde{\alpha} N \big)} \big]\\ \label{proof_approximation0_nonrandom_twoboundary} & = & \exp{\big(\widetilde{\alpha} \int_0^T \widetilde{\theta}_s \theta_s ds\big)} \mathcal{L}_N(\widetilde{\alpha}).
\end{eqnarray}
Here, we use the fact that $\widetilde{W}$ is independent from $W_T$ in the first equality, algebraic manipulation in the second equality, the fact that $\widetilde{W}+\int_0^T \widetilde{\theta}_s \theta_s ds$ is a standard normal variable under $\mathbb{Q}$ by Lemma \ref{lemma_representation_nonrandom_twoboundary} along with \textbf{Assumption C} in the third equality, and Equation (\ref{def_laplace_transform}) in the last equality. We can deduce Equation (\ref{eq_approximation_nonrandom_twoboundary}) from Equations (\ref{eq_nonapproximation_nonrandom_twoboundary}), (\ref{proof_approximation_nonrandom_twoboundary}) and (\ref{proof_approximation0_nonrandom_twoboundary}).
\end{proof}
 Finally, we get $P_{b,c}^Y(T)$ in the next proof, by integrating $\proba (\Tau^Y_{b,c} \leq T | W_T)$ with respect to the value of $W_T$.
\begin{proof}[Proof of Corollary \ref{th_proba_nonrandom_twoboundary1}] 
We can calculate that
\begin{eqnarray*}
P_{b,c}^Y(T) & = & \int_{-\infty}^{\infty} \proba (\Tau^Y_{b,c} \leq T | W_T = x) \frac{1}{\sqrt{2 \pi T}} \exp{\Big(-\frac{x^2}{2T}\Big)}  dx\\ & = & 1 - \phi (\frac{b_T-u_T}{\sqrt{T}}) + \phi (\frac{c_T-u_T}{\sqrt{T}}) \\&&+ \int_{c_T-u_T}^{b_T-u_T} \proba (\Tau^Y_{b,c} \leq T | W_T = x) \frac{1}{\sqrt{2 \pi T}} \exp{\Big(-\frac{x^2}{2T}\Big)}  dx\\ & = & 1 - \phi (\frac{b_T-u_T}{\sqrt{T}}) + \phi (\frac{c_T-u_T}{\sqrt{T}}) \\&&+ \int_{c_T-u_T}^{b_T-u_T} \frac{1}{\sqrt{2 \pi T}} \exp{\Big(-\frac{x^2}{2T}\Big)} \exp{\big(-\alpha x + \frac{1}{2} \int_0^T \theta_s^2 ds\big)} \\ &&\times\esp_{\mathbb{Q}} \big[\mathbf{1}_{\{\Tau^Y_{b,c} \leq T \}}  \exp{\big(-\widetilde{\alpha} \widetilde{W}\big)} | W_T =x \big] dx.
\end{eqnarray*}
Here, we use Equation (\ref{PgZdef_nonrandom_twoboundary}) and regular conditional probability in the first equality, the fact that $\proba (\Tau^Y_{b,c} \leq T | W_T = x)=1$ for any $x \geq b_T-u_T$ and any $x \leq c_T-u_T$ in the second equality, and Equation (\ref{eq_nonapproximation_nonrandom_twoboundary}) in the third equality. We have thus shown Equation (\ref{eq_proba_nonrandom_twoboundary}). Equation (\ref{eq_proba_approximation_nonrandom_twoboundary}) can be shown following the same first two equalities and using Equation (\ref{eq_approximation_nonrandom_twoboundary}) in the third equality.
\end{proof}



\begin{funding}
The author was supported in part by Japanese Society for the Promotion of Science Grants-in-Aid for Scientific Research (B) 23H00807 and Early-Career Scientists 20K13470. 
\end{funding}

\begin{supplement}
\stitle{Supplement A: Results in the two-sided stochastic boundary process case}
\sdescription{Supplement A gives the results in the two-sided stochastic boundary process case.}
\end{supplement}
 \begin{supplement}
\stitle{Supplement B: Proofs in the two-sided stochastic boundary process case}
\sdescription{Supplement B collects the proofs in the two-sided stochastic boundary process case.}
\end{supplement}

\newpage
\begin{appendix}
Supplement A gives the results in the two-sided stochastic boundary process case. Supplement B collects the proofs in the two-sided stochastic boundary process case.
\section{Results in the two-sided stochastic boundary process case}
In this appendix, we consider the case when the two-sided boundary and the drift are stochastic processes and the variance is random.

We first give the definition of the set of stochastic boundary processes.
\begin{definition}\label{defboundaryset_random_twoboundary}
We define the set of stochastic two-sided boundary processes as $\mathcal{J} = \reels^+ \times \Omega \rightarrow \reels^2$ such that for any $(g,h) \in \mathcal{J}$ and $\omega \in \Omega$ we have $(g,h)(\omega) \in \mathcal{I}$ as well as $g$ and $h$ are $\mathbf{F}$-adapted.
\end{definition}
 We now give the definition of the FPT.
\begin{definition}\label{defFPT_random_twoboundary}
We define the FPT of an $\mathbf{F}$-adapted continuous process $Z$ to the two-sided boundary 
$(g,h) \in \mathcal{J}$ satisfying $g_0 \leq Z_0 \leq h_0$ $\forall \omega \in \Omega$ as
\begin{eqnarray}
\label{TgZdef_random_twoboundary}
\Tau_{g,h}^Z = \inf \{t \in \reels^+ \text{ s.t. } Z_t \geq g_t \text{ or } Z_t \leq h_t \}.
\end{eqnarray}
\end{definition}
 We can rewrite $\Tau_{g,h}^Z$ as the infimum of two $\mathbf{F}$-stopping times, i.e., $\Tau_{g,h}^Z = \inf (\Tau_{h}^Z,\Tau_{-g}^{-Z})$. Thus, it is an $\mathbf{F}$-stopping time. We can rewrite the boundary crossing probability $P_{g,h}^Z$ as the cdf of $\Tau_{g,h}^Z$, i.e., 
\begin{eqnarray}
\label{PgZdef_random_twoboundary}
P_{g,h}^Z(t)= \proba (\Tau^Z_{g,h} \leq t) \text{ for any } t \geq 0.
\end{eqnarray}
We assume that $\mu$ is an $\mathbf{F}$-adapted stochastic process which satisfies $\proba(g_0 <\mu_0 < h_0) = 1$. We also assume that the variance $\sigma^2$ is time-invariant, random, and such that $\proba(\sigma^2 = 0)=0$. Finally, we assume that $v$ is independent of $W$ where $v$ is defined as $v=(g,h,\mu,\sigma)$. 
\begin{proof}[\textbf{Assumption D}]
We assume that $\proba( \exists t \in [0,T] \text{ s.t } u_t \neq 0) = 1$. We also assume that $u$ is absolutely continuous on $[0,T]$, i.e., there exists a stochastic process $\theta : [0,T] \times \Omega \rightarrow \reels$ with $u_t=\int_0^t \theta_s ds$, a.s.. Finally, we assume that $\esp [\exp{\big(\frac{1}{2} \int_0^T \theta_s^2 ds\big)}] < \infty$.
\phantom\qedhere
\end{proof}
 By \textbf{Assumption D}, $M$ satisfies Novikov's condition and thus is a positive martingale.
 \begin{lemma} \label{lemma_girsanov_random_twoboundary} Under \textbf{Assumption D}, we have that $M$ is a positive martingale. Thus, we can consider an equivalent probability measure $\mathbb{Q}$ such that the Radon-Nikodym derivative is defined as $\frac{d \mathbb{Q}}{d \proba}=M_T$. Finally, $Y$ is a standard Wiener process under $\mathbb{Q}$. \end{lemma}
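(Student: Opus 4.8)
The plan is to establish Lemma \ref{lemma_girsanov_random_twoboundary} by the same route used for its analogues, Lemmas \ref{lemma_girsanov}, \ref{lemma_girsanov_random_oneboundary} and \ref{lemma_girsanov_nonrandom_twoboundary}; the only structural novelty here is that the integrand $\theta$ and the data $v=(g,h,\mu,\sigma)$ are random rather than deterministic, so the task reduces to checking that the Girsanov/Novikov machinery survives this change. First I would record that $\theta$ is $\mathbf{F}$-adapted: by Definition \ref{defboundaryset_random_twoboundary} the boundaries $g$, $h$ and the drift $\mu$ are $\mathbf{F}$-adapted, and \textbf{Assumption D} provides the representation $u_t=\int_0^t\theta_s\,ds$, so that $\int_0^t\theta_s\,dW_s$ is a well-defined $\mathbf{F}$-local martingale. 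Consequently $M_t=\exp\big(\int_0^t\theta_s\,dW_s-\frac12\int_0^t\theta_s^2\,ds\big)$ is the stochastic exponential of this local martingale, hence a strictly positive local martingale with $M_0=1$.

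The second step is to upgrade $M$ from a local martingale to a true martingale. The integrability requirement in \textbf{Assumption D}, namely $\esp\big[\exp\big(\frac12\int_0^T\theta_s^2\,ds\big)\big]<\infty$, is precisely Novikov's condition (see Novikov (\citeyear{novikov1972identity})), and it guarantees $\esp[M_T]=M_0=1$; therefore $(M_t)_{0\le t\le T}$ is a positive martingale. Since $M_T>0$ almost surely and integrates to $1$, the prescription $\frac{d\mathbb{Q}}{d\proba}=M_T$ defines a probability measure $\mathbb{Q}$, and $\mathbb{Q}$ is equivalent to $\proba$ because both $M_T$ and $M_T^{-1}$ are almost surely finite and strictly positive.

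Finally I would apply the Girsanov theorem (see Girsanov (\citeyear{girsanov1960transforming})) to the measure change with density $M_T$: this removes the drift $u$ from $Y_t=u_t+W_t$, so that $Y$ is a standard Wiener process under $\mathbb{Q}$, which is the assertion of the lemma. The only point where this case genuinely differs from the nonrandom ones is that $\theta$ is now a random $\mathbf{F}$-adapted integrand and the expectation in \textbf{Assumption D} is taken over the joint randomness of $W$ and $v$; I therefore expect the main (though still routine) step to be verifying that Novikov's condition in this joint form indeed makes $M$ a martingale and that the Girsanov theorem applies verbatim to such an integrand. As the text notes, this argument would go through unchanged under the weaker Kazamaki (\citeyear{kazamaki1977problem}) condition, and beyond this bookkeeping I do not anticipate any serious obstacle, the statement being a direct transcription of the reasoning already carried out three times above.
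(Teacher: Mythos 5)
Your proposal is correct and follows exactly the route the paper takes: the paper offers no separate proof of this lemma beyond the remark that \textbf{Assumption D} is Novikov's condition, so $M$ is a positive martingale, $\frac{d\mathbb{Q}}{d\proba}=M_T$ defines an equivalent measure, and the Girsanov theorem makes $Y$ a standard Wiener process under $\mathbb{Q}$. Your additional bookkeeping (adaptedness of $\theta$ via Definition \ref{defboundaryset_random_twoboundary}, strict positivity of the stochastic exponential, and the observation that Novikov's condition is taken over the joint randomness of $W$ and $v$) is consistent with, and slightly more explicit than, what the paper records.
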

 The elementary idea in this appendix is to condition by both $W_T$ and $v$, i.e., to derive results of the form $\proba (\Tau^Y_{b,c} \leq T | W_T,v)$. The next proposition reexpresses $\proba (\Tau^Y_{b,c} \leq T | W_T,v)$ under $\mathbb{Q}$. We define $\overline{W}_t$ as
\begin{eqnarray}
\label{overlineWdef_random_twoboundary}
\overline{W}_t = \int_0^t \theta_s dW_s.
\end{eqnarray}
\begin{proposition}
\label{th_condproba_random_twoboundary}
Under \textbf{Assumption D}, we have
\begin{eqnarray}
\label{eq_condproba_random_twoboundary0}
\proba (\Tau^Y_{b,c} \leq T | W_T,v) & = & \esp_{\mathbb{Q}} \big[\mathbf{1}_{\{\Tau^Y_{b,c} \leq T \}} M_T^{-1} | W_T ,v\big].
\end{eqnarray}
This can be reexpressed as
\begin{eqnarray}
\label{eq_condproba_random_twoboundary}
\proba (\Tau^Y_{b,c} \leq T | W_T,v)  =  \\ \nonumber \esp_{\mathbb{Q}} \big[ M_T^{-1} \esp_\mathbb{Q} \big[\mathbf{1}_{\{\Tau^Y_{b,c} \leq T \}} | W_T,\overline{W}_T,v \big] | W_T,v \big].
\end{eqnarray}
\end{proposition}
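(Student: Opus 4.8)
The plan is to mirror the argument used for Proposition \ref{th_condproba_random_oneboundary}, replacing the one-sided stopping time $\Tau^Y_b$ by the two-sided stopping time $\Tau^Y_{b,c}$ and invoking Lemma \ref{lemma_girsanov_random_twoboundary} in place of Lemma \ref{lemma_girsanov_random_oneboundary}. The only structural input needed is the change-of-measure identity $\esp_\proba[X] = \esp_{\mathbb{Q}}[X M_T^{-1}]$, valid for every $\mathcal{F}_T$-measurable $X$ under \textbf{Assumption D} by Lemma \ref{lemma_girsanov_random_twoboundary}, together with the defining property of conditional expectation: two integrable random variables with equal integrals over every event in a given $\sigma$-field have the same conditional expectation with respect to that $\sigma$-field.

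First I would establish Equation (\ref{eq_condproba_random_twoboundary0}). Fix an arbitrary $\sigma(W_T,v)$-measurable event $E_T$ and apply the change-of-measure identity to $X = \mathbf{1}_{\{\Tau^Y_{b,c} \leq T\}}\mathbf{1}_{E_T}$, obtaining $\esp_\proba[\mathbf{1}_{\{\Tau^Y_{b,c} \leq T\}}\mathbf{1}_{E_T}] = \esp_{\mathbb{Q}}[\mathbf{1}_{\{\Tau^Y_{b,c} \leq T\}} M_T^{-1}\mathbf{1}_{E_T}]$. Since this holds for every such $E_T$, the defining property of conditional expectation with respect to $\sigma(W_T,v)$ yields $\esp_\proba[\mathbf{1}_{\{\Tau^Y_{b,c} \leq T\}}|W_T,v] = \esp_{\mathbb{Q}}[\mathbf{1}_{\{\Tau^Y_{b,c} \leq T\}} M_T^{-1}|W_T,v]$, which is (\ref{eq_condproba_random_twoboundary0}) once the left-hand conditional expectation is read as a conditional probability.

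Next I would establish the reexpressed identity (\ref{eq_condproba_random_twoboundary}). It suffices to verify, for every $\sigma(W_T,v)$-measurable $E_T$, that integrating the right-hand side against $\mathbf{1}_{E_T}$ reproduces $\esp_\proba[\mathbf{1}_{\{\Tau^Y_{b,c} \leq T\}}\mathbf{1}_{E_T}]$. Starting from the latter, the change-of-measure identity gives $\esp_{\mathbb{Q}}[\mathbf{1}_{\{\Tau^Y_{b,c} \leq T\}}\mathbf{1}_{E_T} M_T^{-1}]$; applying the law of total expectation by conditioning on $(W_T,\overline{W}_T,v)$ and then pulling the $\sigma(W_T,\overline{W}_T,v)$-measurable factors $\mathbf{1}_{E_T}$ and $M_T^{-1}$ out of the inner conditional expectation leaves $\esp_{\mathbb{Q}}[\mathbf{1}_{E_T} M_T^{-1}\esp_{\mathbb{Q}}[\mathbf{1}_{\{\Tau^Y_{b,c} \leq T\}}|W_T,\overline{W}_T,v]]$. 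A final use of the tower property over $\sigma(W_T,v)$ recovers the claimed form, and the defining property of conditional expectation then gives (\ref{eq_condproba_random_twoboundary}).

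The step requiring the most care is the measurability bookkeeping that licenses pulling the factors out of the inner conditional expectation. Here one uses that, under \textbf{Assumption D}, the integrand $\theta$ is $\sigma(v)$-measurable since it is built from $v=(g,h,\mu,\sigma)$; consequently $\int_0^T\theta_s^2\,ds$ is $\sigma(v)$-measurable and $\overline{W}_T$ defined in (\ref{overlineWdef_random_twoboundary}) is $\sigma(W,v)$-measurable, so that $M_T = \exp(\overline{W}_T - \frac{1}{2}\int_0^T\theta_s^2\,ds)$, and hence $M_T^{-1}$, is $\sigma(W_T,\overline{W}_T,v)$-measurable, while $\mathbf{1}_{E_T}$ is $\sigma(W_T,v)$-measurable and therefore a fortiori $\sigma(W_T,\overline{W}_T,v)$-measurable. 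No new analytic difficulty arises relative to the one-sided stochastic case: the two-sidedness of the boundary enters only through the event $\{\Tau^Y_{b,c}\leq T\}$ and is inert in this measure-theoretic argument.
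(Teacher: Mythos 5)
Your proposal follows essentially the same route as the paper's own proof: the same test-event argument with an arbitrary $\sigma(W_T,v)$-measurable $E_T$, the same change of measure via Lemma \ref{lemma_girsanov_random_twoboundary} applied to $X=\mathbf{1}_{\{\Tau^Y_{b,c}\leq T\}}\mathbf{1}_{E_T}$, and the same use of the tower property over $\sigma(W_T,\overline{W}_T,v)$ followed by pulling out the $\sigma(W_T,\overline{W}_T,v)$-measurable factors $\mathbf{1}_{E_T}$ and $M_T^{-1}$. The only difference is that you spell out the measurability bookkeeping (that $\theta$, and hence $M_T^{-1}$, is built from $v$ and $\overline{W}_T$) slightly more explicitly than the paper does, which is a welcome but inessential addition.
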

 We define the correlation under $\proba$ between $W_T$ and $\frac{\overline{W}_T}{\sqrt{\int_0^T \theta_s^2 ds}}$ as $\rho$, i.e., $\rho = \operatorname{Cor}_\proba(W_T, \frac{\overline{W}_T}{\sqrt{\int_0^T \theta_s^2 ds}})$.
\begin{lemma} \label{lemma_representation_random_twoboundary} Under \textbf{Assumption D}, we have that $\frac{\overline{W}_T}{\sqrt{\int_0^T \theta_s^2 ds}}$ is a standard normal random variable under $\proba$. We can also show that $\rho  = \frac{1}{T}\esp_\proba \Big[\frac{\int_0^T \theta_s ds }{\sqrt{\int_0^T \theta_s^2 ds}}\Big]$ a.s.. Moreover, there exists a standard normal random variable $\widetilde{W}$ under $\proba$, which is independent of $W_T$, and such that $\overline{W}_T$ when normalized can be reexpressed a.s. as
\begin{eqnarray}
\label{def_Wtilde_random_twoboundary}
\frac{\overline{W}_T}{\sqrt{\int_0^T \theta_s^2 ds}} = \rho \frac{W_T}{\sqrt{T}} + \sqrt{1-\rho^2} \widetilde{W}.
\end{eqnarray}
This can be reexpressed a.s. as
\begin{eqnarray}
\label{def_Wtilde0_random_twoboundary}
\overline{W}_T = \alpha W_T + \widetilde{\alpha} \widetilde{W},
\end{eqnarray}
where $\alpha = \rho\sqrt{T^{-1}\int_0^T \theta_s^2 ds}$ a.s. and $\widetilde{\alpha} = \sqrt{(1- \rho^2) \int_0^T \theta_s^2 ds}$ a.s.. If we define $\widetilde{\theta}_t = \frac{\theta_s - \alpha}{\widetilde{\alpha}}$, we can reexpress $\widetilde{W}$ a.s. as
\begin{eqnarray}
\label{def_Wtilde1_random_twoboundary}
\widetilde{W} & = & \int_0^T \widetilde{\theta}_s dW_s.
\end{eqnarray}
Moreover, $\widetilde{W}+\int_0^T \widetilde{\theta}_s \theta_s ds$ is a standard normal variable under $\mathbb{Q}$. Finally, the conditional distribution of $\widetilde{W}+\int_0^T \widetilde{\theta}_s \theta_s ds$ given $v$, i.e., $\mathcal{D}(\widetilde{W}+\int_0^T \widetilde{\theta}_s \theta_s ds | v)$, is standard normal under $\mathbb{Q}$.
\end{lemma}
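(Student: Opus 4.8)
The plan is to replicate, almost verbatim, the argument given for the one-sided analogue Lemma~\ref{lemma_representation_random_oneboundary}: the only structural change is that $v=(g,h,\mu,\sigma)$ now carries the extra boundary $h$, but since $v$ enters solely through the random integrand $\theta$ (and through $\alpha,\widetilde{\alpha},\widetilde{\theta}$) and since $v\perp W$ under \textbf{Assumption D}, the conditioning-on-$v$ device transfers without change. First I would use \textbf{Assumption D} to guarantee $0<\int_0^T\theta_s^2\,ds<\infty$ a.s., which makes the normalization $\overline{W}_T/\sqrt{\int_0^T\theta_s^2\,ds}$ well defined; note that $\sqrt{\int_0^T\theta_s^2\,ds}$ is $\sigma(v)$-measurable. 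Conditioning on $v$ freezes the integrand, so by the martingale property of the It\^o integral the conditional mean of $\overline{W}_T/\sqrt{\int_0^T\theta_s^2\,ds}$ is $0$, and by the It\^o isometry its conditional variance is $1$; since both are nonrandom, the law of total expectation promotes this to the statement that $\overline{W}_T/\sqrt{\int_0^T\theta_s^2\,ds}$ is standard normal under $\proba$.

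Next, working under the (conditional on $v$) centered Gaussian structure of $(W_T,\overline{W}_T/\sqrt{\int_0^T\theta_s^2\,ds})$, I would introduce the auxiliary variable $\widetilde{W}$ through the orthogonal decomposition~\eqref{def_Wtilde_random_twoboundary}, equivalently~\eqref{def_Wtilde0_random_twoboundary}, and record that it is standard normal and independent of $W_T$. The correlation $\rho$ is obtained exactly as in the one-sided proof: the conditional covariance of $W_T$ with the normalized $\overline{W}_T$ equals $\int_0^T\theta_s\,ds/\sqrt{\int_0^T\theta_s^2\,ds}$ by the It\^o isometry, and averaging over $v$ by the law of total expectation (pulling out the $\sigma(v)$-measurable factor $1/\sqrt{\int_0^T\theta_s^2\,ds}$) yields $\rho=\tfrac1T\esp_\proba[\int_0^T\theta_s\,ds/\sqrt{\int_0^T\theta_s^2\,ds}]$. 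Solving~\eqref{def_Wtilde0_random_twoboundary} for $\widetilde{W}$ and substituting $\overline{W}_T=\int_0^T\theta_s\,dW_s$ from~\eqref{overlineWdef_random_twoboundary} gives the Wiener-integral representation~\eqref{def_Wtilde1_random_twoboundary} with $\widetilde{\theta}_s=(\theta_s-\alpha)/\widetilde{\alpha}$, the normalization $\int_0^T\widetilde{\theta}_s^2\,ds=1$ being a routine consequence of the definitions of $\alpha$ and $\widetilde{\alpha}$.

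For the two statements under $\mathbb{Q}$ I would invoke Lemma~\ref{lemma_girsanov_random_twoboundary}, by which $Y$ is a standard Wiener process under $\mathbb{Q}$. Using $dY_s=\theta_s\,ds+dW_s$, I rewrite $\widetilde{W}+\int_0^T\widetilde{\theta}_s\theta_s\,ds=\int_0^T\widetilde{\theta}_s\,dY_s$; this is a Wiener integral of a square-integrable integrand against the $\mathbb{Q}$-Wiener process $Y$, hence standard normal under $\mathbb{Q}$. Running the same computation conditionally on $v$, under which $\widetilde{\theta}$ is a deterministic integrand of unit $L^2$-norm, yields the final assertion that $\mathcal{D}(\widetilde{W}+\int_0^T\widetilde{\theta}_s\theta_s\,ds\mid v)$ is standard normal under $\mathbb{Q}$.

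The main obstacle is bookkeeping the randomness of $\theta$ through $v$: every distributional claim that was immediate in the nonrandom Lemma~\ref{lemma_representation_nonrandom_twoboundary} must here be recovered conditionally on $v$ and then lifted by the law of total expectation, which is legitimate precisely because $v\perp W$ and because the normalizing factor $\sqrt{\int_0^T\theta_s^2\,ds}$, together with $\alpha$, $\widetilde{\alpha}$ and $\widetilde{\theta}$, are $\sigma(v)$-measurable. The genuinely new ingredient relative to the one-sided case is the closing statement about the conditional law given $v$ under $\mathbb{Q}$, which is needed downstream; it follows once one observes that conditioning on $v$ freezes the integrand $\widetilde{\theta}$ while $Y$ remains a $\mathbb{Q}$-Wiener process, so the conditional law is that of a deterministic Wiener integral of unit variance.
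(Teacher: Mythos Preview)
Your proposal is correct and follows essentially the same approach as the paper, which explicitly states that the proof proceeds ``using the same arguments from the proof of Lemma~2.10'' (i.e., Lemma~\ref{lemma_representation_random_oneboundary}). One small inaccuracy in your commentary: the closing statement about $\mathcal{D}(\widetilde{W}+\int_0^T\widetilde{\theta}_s\theta_s\,ds\mid v)$ under $\mathbb{Q}$ is not new relative to the one-sided stochastic case---it already appears verbatim in Lemma~\ref{lemma_representation_random_oneboundary}---so the two-sided lemma here is a pure carbon copy with $v=(g,h,\mu,\sigma)$ in place of $v=(g,\mu,\sigma)$, exactly as you anticipated.
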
 
 Our main result is the next theorem.
\begin{theorem}
\label{th_approximation_random_twoboundary}
Under \textbf{Assumption D},
we have
\begin{eqnarray}
\nonumber
\proba (\Tau^Y_{b,c} \leq T | W_T,v) & = & \exp{\big(-\alpha W_T + \frac{1}{2} \int_0^T \theta_s^2 ds\big)} \\& &\times \esp_{\mathbb{Q}} \big[\mathbf{1}_{\{\Tau^Y_{b,c} \leq T \}}  \exp{\big(-\widetilde{\alpha} \widetilde{W}\big)} | W_T,v \big] \label{eq_nonapproximation_random_twoboundary}.
\end{eqnarray}
\end{theorem}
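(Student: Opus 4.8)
The plan is to mirror the proof of Theorem~\ref{th_approximation_random_oneboundary} essentially verbatim, with the one-sided first-passage event $\{\Tau^Y_b \leq T\}$ replaced by the two-sided event $\{\Tau^Y_{b,c} \leq T\}$. The two-sided geometry never actually enters the argument, since it is hidden entirely inside the indicator, which is carried along untouched throughout. The starting point is Equation~(\ref{eq_condproba_random_twoboundary0}) from Proposition~\ref{th_condproba_random_twoboundary}, namely $\proba (\Tau^Y_{b,c} \leq T | W_T,v) = \esp_{\mathbb{Q}} \big[\mathbf{1}_{\{\Tau^Y_{b,c} \leq T \}} M_T^{-1} | W_T ,v\big]$, which has already absorbed the change of measure.

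First I would rewrite the Radon--Nikodym derivative $M_T$ using the representation from Lemma~\ref{lemma_representation_random_twoboundary}. Starting from the definition of $M_T$ and Equation~(\ref{overlineWdef_random_twoboundary}), one has $M_T = \exp\big(\overline{W}_T - \tfrac{1}{2}\int_0^T \theta_s^2 ds\big)$ a.s., and substituting the decomposition $\overline{W}_T = \alpha W_T + \widetilde{\alpha}\widetilde{W}$ from Equation~(\ref{def_Wtilde0_random_twoboundary}) yields, after an elementary rearrangement,
\begin{eqnarray*}
M_T = \exp\big(\alpha W_T - \tfrac{1}{2}\textstyle\int_0^T \theta_s^2 ds\big)\exp\big(\widetilde{\alpha}\widetilde{W}\big)
\end{eqnarray*}
a.s., so that $M_T^{-1} = \exp\big(-\alpha W_T + \tfrac{1}{2}\int_0^T \theta_s^2 ds\big)\exp\big(-\widetilde{\alpha}\widetilde{W}\big)$ a.s.

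I would then insert this expression into Equation~(\ref{eq_condproba_random_twoboundary0}) and pull the first factor out of the conditional expectation. The one point requiring care is measurability: since $\theta$ is built from $u$, which is a deterministic functional of $v=(g,h,\mu,\sigma)$, the quantities $\alpha$, $\widetilde{\alpha}$, and $\int_0^T \theta_s^2 ds$ are all $\sigma(v)$-measurable by Lemma~\ref{lemma_representation_random_twoboundary}, and hence the factor $\exp\big(-\alpha W_T + \tfrac{1}{2}\int_0^T \theta_s^2 ds\big)$ is $\sigma(W_T,v)$-measurable. It may therefore be taken outside $\esp_{\mathbb{Q}}[\,\cdot\,|W_T,v]$, leaving exactly the claimed Equation~(\ref{eq_nonapproximation_random_twoboundary}).

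Since every nontrivial fact has already been established upstream---Proposition~\ref{th_condproba_random_twoboundary} for the change of measure and Lemma~\ref{lemma_representation_random_twoboundary} for the Gaussian decomposition and the attendant $\sigma(v)$-measurability---there is no genuine obstacle in this theorem itself; its whole content is the algebraic factorization of $M_T^{-1}$ together with the measurability bookkeeping. The only thing worth double-checking is that the $\sigma(v)$-measurability of $\alpha$ and $\widetilde{\alpha}$ (rather than mere $\sigma(W_T,v)$-measurability) is genuinely available, which it is, because those constants are defined through $\rho$ and $\int_0^T\theta_s^2\,ds$, both functionals of $v$ alone.
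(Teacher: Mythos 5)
Your proposal is correct and follows essentially the same route as the paper: the paper's own proof likewise invokes the decomposition $M_T = \exp\big(\alpha W_T - \tfrac{1}{2}\int_0^T \theta_s^2\, ds\big)\exp\big(\widetilde{\alpha}\widetilde{W}\big)$ from Lemma~\ref{lemma_representation_random_twoboundary} (citing the same argument as in the one-sided case), substitutes it into Equation~(\ref{eq_condproba_random_twoboundary0}) of Proposition~\ref{th_condproba_random_twoboundary}, and pulls the $\sigma(W_T,v)$-measurable factor out of the conditional expectation. Your explicit remark that the two-sided geometry stays hidden inside the indicator, and your bookkeeping of the $\sigma(v)$-measurability of $\alpha$ and $\widetilde{\alpha}$, match the paper's reasoning.
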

 We first calculate $\mathbb{Q} (\Tau^Y_{b,c} \leq T | W_T,v)$.
\begin{lemma}
\label{prop_condproba_random_twoboundary1}
Under \textbf{Assumption D}, we have
\begin{eqnarray}
\label{propeq_condproba_random_twoboundary1}
\mathbb{Q} (\Tau^Y_{b,c} \leq T | W_T,v) & = &  \sum_{j=1}^{\infty} q^Y_{b,c}(j | Y_T) \mathbf{1}_{\{Y_T \in [c_T,b_T]\}} + \mathbf{1}_{\{Y_T \notin [c_T,b_T]\}}.
\end{eqnarray}
\end{lemma}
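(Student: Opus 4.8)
The plan is to reduce Lemma \ref{prop_condproba_random_twoboundary1} to the nonrandom two-sided boundary case treated in Lemma \ref{prop_condproba_nonrandom_twoboundary1}, by conditioning on $v$. First I would restate Equation (\ref{propeq_condproba_random_twoboundary1}), by definition of the conditional probability, in the equivalent conditional-expectation form
\begin{equation*}
\esp_\mathbb{Q} \big[\mathbf{1}_{\{\Tau^Y_{b,c} \leq T \}} \mid W_T,v \big] = \sum_{j=1}^{\infty} q^Y_{b,c}(j \mid Y_T) \mathbf{1}_{\{Y_T \in [c_T,b_T]\}} + \mathbf{1}_{\{Y_T \notin [c_T,b_T]\}}.
\end{equation*}
Because $Y_T = u_T + W_T$ with $u_T$ being $\sigma(v)$-measurable, conditioning on $(W_T,v)$ coincides with conditioning on $(Y_T,v)$, which is the form in which Anderson's formula is naturally expressed.

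The crucial step is to establish that, conditionally on $v$, the whole path $Y$ is a standard Wiener process under $\mathbb{Q}$. Since $v=(g,h,\mu,\sigma)$ is independent of $W$ under $\proba$, fixing a realization $v=y$ renders $\theta$, $u$, $b$ and $c$ deterministic and reduces $M_T$ to the nonrandom exponential martingale $\exp\big(\int_0^T y_{\theta,s}\, dW_s - \tfrac{1}{2}\int_0^T y_{\theta,s}^2\, ds\big)$ on the path space of $W$. Applying the Girsanov theorem on this space exactly as in Lemma \ref{lemma_girsanov_nonrandom_twoboundary} shows that $Y = W + y_u$ is a standard Wiener process under the conditional law $\mathbb{Q}(\cdot \mid v=y)$; this is the path-level strengthening of the terminal statement in Lemma \ref{lemma_representation_random_twoboundary} that $\mathcal{D}(\widetilde{W}+\int_0^T \widetilde{\theta}_s \theta_s\, ds \mid v)$ is standard normal under $\mathbb{Q}$, and it is what Lemma \ref{lemma_girsanov_random_twoboundary} asserts unconditionally.

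Given $v=y$, the two-sided boundary consists of a constant upper level $b$ and a linear lower level $c_t$, so the conditional first-passage problem for the $\mathbb{Q}$-Wiener process $Y$ is exactly the one solved by Anderson (\citeyear{anderson1960modification}) (Theorem 4.2, pp. 178-179), with $\delta_t = \delta_t(b,c)$. Invoking his formula yields $\mathbb{Q}(\Tau^Y_{b,c} \leq T \mid W_T = x, v = y)$ in the form $\sum_{j=1}^{\infty} q^Y_{b,c}(j \mid x+u_T)\,\mathbf{1}_{\{x+u_T \in [c_T,b_T]\}} + \mathbf{1}_{\{x+u_T \notin [c_T,b_T]\}}$ for $P_v$-almost every $y$. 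Reassembling these disintegrated identities over $y$ gives the displayed conditional-expectation identity and hence Equation (\ref{propeq_condproba_random_twoboundary1}).

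The step I expect to be the main obstacle is the conditional Girsanov argument of the second paragraph: one must confirm that the conditional law of the entire trajectory $(Y_t)_{0 \le t \le T}$ given $v$ under $\mathbb{Q}$ is genuinely that of a standard Wiener process, so that the path-functional formula of Anderson can be applied verbatim inside the conditional expectation rather than only to the terminal marginal. This hinges on the independence of $v$ and $W$ under $\proba$ together with the $\sigma(v)$-measurability of $\theta$, which are precisely what allow the change of measure to factorize across the value of $v$.
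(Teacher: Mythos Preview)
Your proposal is correct and follows essentially the same approach as the paper: rewrite the conditional probability as a conditional expectation, use that $Y$ is a standard Wiener process under $\mathbb{Q}$ (via Lemma \ref{lemma_girsanov_random_twoboundary} and \textbf{Assumption D}), and then invoke Anderson (\citeyear{anderson1960modification}) (Theorem 4.2). The paper's proof is extremely terse and does not spell out the disintegration over $v$; your version, which makes explicit that conditioning on $v=y$ renders $\theta$, $u$, $b$, $c$ deterministic so that the nonrandom case of Lemma \ref{prop_condproba_nonrandom_twoboundary1} applies pathwise, is a more careful rendering of the same argument rather than a different one.
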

 The next theorem gives a formula based on the strong theoretical assumption (\ref{approximation_assumption_random_twoboundary}).
\begin{theorem}
\label{th_approximation2_random_twoboundary}
We assume that \textbf{Assumption D} and the following assumption 
\begin{eqnarray}
\label{approximation_assumption_random_twoboundary}
\esp_{\mathbb{Q}} \big[\mathbf{1}_{\{\Tau^Y_{b,c} \leq T \}}  \exp{\big(-\widetilde{\alpha} \widetilde{W}\big)} | W_T ,v\big] \\ \nonumber = \esp_{\mathbb{Q}} \big[\mathbf{1}_{\{\Tau^Y_{b,c} \leq T \}}  | W_T,v \big] \esp_{\mathbb{Q}} \big[  \exp{\big(-\widetilde{\alpha} \widetilde{W}\big)} | W_T,v \big] 
\end{eqnarray}
holds. Then, we have
\begin{eqnarray}
\label{eq_approximation_random_twoboundary}
\proba (\Tau^Y_{b,c} \leq T | W_T,v) & = & \exp{\big(-\alpha W_T + \frac{1}{2} \int_0^T \theta_s^2 ds\big)} \\ && \nonumber \times \Big(\sum_{j=1}^{\infty} q^Y_{b,c}(j | Y_T) \mathbf{1}_{\{Y_T \in [c_T,b_T]\}} \\ &&+ \mathbf{1}_{\{Y_T \notin [c_T,b_T]\}} \Big) \exp{\big(\widetilde{\alpha} \int_0^T \widetilde{\theta}_s \theta_s ds\big)} \mathcal{L}_N(\widetilde{\alpha}).
\end{eqnarray}
\end{theorem}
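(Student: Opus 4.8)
The plan is to follow step for step the argument used for Theorem~\ref{th_approximation2_random_oneboundary}, replacing the one-sided Malmquist formula by the two-sided Anderson-type series of Lemma~\ref{prop_condproba_random_twoboundary1}. Starting from Theorem~\ref{th_approximation_random_twoboundary}, the quantity $\proba (\Tau^Y_{b,c} \leq T | W_T,v)$ equals $\exp(-\alpha W_T + \frac{1}{2} \int_0^T \theta_s^2 ds)$ times the conditional expectation $\esp_{\mathbb{Q}}[\mathbf{1}_{\{\Tau^Y_{b,c} \leq T\}} \exp(-\widetilde{\alpha}\widetilde{W}) | W_T, v]$. The whole task reduces to evaluating this last factor, and the assumed factorization~(\ref{approximation_assumption_random_twoboundary}) is precisely what splits it into $\esp_{\mathbb{Q}}[\mathbf{1}_{\{\Tau^Y_{b,c} \leq T\}} | W_T, v]$ times $\esp_{\mathbb{Q}}[\exp(-\widetilde{\alpha}\widetilde{W}) | W_T, v]$.

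First I would treat the indicator factor. By Lemma~\ref{prop_condproba_random_twoboundary1}, the term $\esp_{\mathbb{Q}}[\mathbf{1}_{\{\Tau^Y_{b,c} \leq T\}} | W_T, v]$ equals the explicit series $\sum_{j=1}^\infty q^Y_{b,c}(j | Y_T) \mathbf{1}_{\{Y_T \in [c_T, b_T]\}} + \mathbf{1}_{\{Y_T \notin [c_T, b_T]\}}$, so this part is immediate once the lemma is in hand. Second I would compute the Laplace factor $\esp_{\mathbb{Q}}[\exp(-\widetilde{\alpha}\widetilde{W}) | W_T, v]$, mirroring Eq.~(\ref{proof_approximation0_random_oneboundary}). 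Since $\widetilde{W}$ is independent of $W_T$, the conditioning on $W_T$ drops and this equals $\esp_{\mathbb{Q}}[\exp(-\widetilde{\alpha}\widetilde{W}) | v]$. Because $\widetilde{\alpha}$ and $\int_0^T \widetilde{\theta}_s \theta_s ds$ are $\sigma(v)$-measurable, I would pull the factor $\exp(\widetilde{\alpha} \int_0^T \widetilde{\theta}_s \theta_s ds)$ outside and rewrite the remaining expectation in terms of the centered variable $\widetilde{W} + \int_0^T \widetilde{\theta}_s \theta_s ds$.

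The key input here is the last assertion of Lemma~\ref{lemma_representation_random_twoboundary}: the conditional law $\mathcal{D}(\widetilde{W} + \int_0^T \widetilde{\theta}_s \theta_s ds | v)$ is standard normal under $\mathbb{Q}$. This lets me replace the conditional expectation by $\mathcal{L}_N(\widetilde{\alpha})$, giving $\esp_{\mathbb{Q}}[\exp(-\widetilde{\alpha}\widetilde{W}) | v] = \exp(\widetilde{\alpha} \int_0^T \widetilde{\theta}_s \theta_s ds) \mathcal{L}_N(\widetilde{\alpha})$. Substituting both evaluated factors back into the expression of Theorem~\ref{th_approximation_random_twoboundary} then yields~(\ref{eq_approximation_random_twoboundary}).

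The only point requiring care---the main obstacle, though a mild one---is the bookkeeping of the $v$-conditioning: one must verify that $\alpha$, $\widetilde{\alpha}$, $\theta$, $\widetilde{\theta}$, and in particular $\int_0^T \widetilde{\theta}_s \theta_s ds$ are all $\sigma(v)$-measurable, so that they behave as constants under $\esp_{\mathbb{Q}}[\,\cdot\,|v]$, and that it is the conditional (given $v$) normality from Lemma~\ref{lemma_representation_random_twoboundary}, rather than merely the unconditional one, that is invoked. This is exactly where the two-sided random case differs from the deterministic two-sided case of Theorem~\ref{th_approximation2_nonrandom_twoboundary}, in which these quantities were genuinely nonrandom and no conditioning on $v$ was needed.
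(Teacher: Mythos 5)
Your proposal matches the paper's own proof essentially step for step: it applies the factorization assumption (\ref{approximation_assumption_random_twoboundary}), evaluates the indicator factor via Lemma \ref{prop_condproba_random_twoboundary1}, computes the Laplace factor using the independence of $\widetilde{W}$ from $W_T$, the $\sigma(v)$-measurability of $\theta$, $\widetilde{\theta}$, $\widetilde{\alpha}$, and the conditional standard normality of $\widetilde{W}+\int_0^T \widetilde{\theta}_s \theta_s\, ds$ given $v$ from Lemma \ref{lemma_representation_random_twoboundary}, and then substitutes into Theorem \ref{th_approximation_random_twoboundary}. The point you flag as requiring care (the $v$-conditioning bookkeeping) is exactly the point the paper's proof also handles explicitly, so the proposal is correct and follows the same route.
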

 Finally, we get $P_b^Y(T)$ in the next corollary, by integrating $\proba (\Tau^Y_b \leq T | W_T,v)$ with respect to the value of $(W_T,v)$. We define the arrival space and cdf of $v$ as respectively $\Pi_v$ and $P_v$. Moreover,  we define $y_u$, $y_b$, $y_\theta$, etc. following the above definitions when integrating with respect to $y \in \Pi_v$.
\begin{corollary}
\label{th_proba_random_twoboundary1}
Under \textbf{Assumption D}, we have
\begin{eqnarray}
\nonumber 
P_{b,c}^Y(T) & = & 1 - \phi (\frac{b_T-u_T}{\sqrt{T}}) + \phi (\frac{c_T-u_T}{\sqrt{T}}) \\&& \nonumber + \int_{c_T-u_T}^{b_T-u_T}\int_{\Pi_v} \frac{1}{\sqrt{2 \pi T}} \exp{\Big(-\frac{x^2}{2T}\Big)} \exp{\big(-y_\alpha x + \frac{1}{2} \int_0^T y_{\theta,s}^2 ds\big)} \\ \label{eq_proba_random_twoboundary} &&\times \esp_{\mathbb{Q}} \big[\mathbf{1}_{\{\Tau^Y_{b,c} \leq T \}}  \exp{\big(-\widetilde{\alpha} \widetilde{W}\big)} | W_T =x,v=y \big] dx dP_v(y).
\end{eqnarray}
If we further assume  (\ref{approximation_assumption_random_twoboundary}), we have
\begin{eqnarray}
\nonumber 
P_{b,c}^Y(T) & = & 1 - \phi (\frac{b_T-u_T}{\sqrt{T}}) + \phi (\frac{c_T-u_T}{\sqrt{T}}) \\&& \nonumber + \int_{c_T-u_T}^{b_T-u_T} \int_{\Pi_v} \frac{1}{\sqrt{2 \pi T}} \exp{\Big(-\frac{x^2}{2T}\Big)} \exp{\big(-y_\alpha x + \frac{1}{2} \int_0^T y_{\theta,s}^2 ds\big)} \\ & & \nonumber \times \Big(\sum_{j=1}^{\infty} y^{x+y_{u,T}}_{q,y_b,y_c}(j | x+y_{u,T}) \mathbf{1}_{\{x \in [y_{c,T}-y_{u,T},y_{b,T}-y_{u,T}]\}} \\ & & \nonumber + \mathbf{1}_{\{x \notin [y_{c,T}-y_{u,T},y_{b,T}-y_{u,T}]\}}\Big) \\ & & \label{eq_proba_approximation_random_twoboundary} \times \exp{\big(y_{\widetilde{\alpha}} \int_0^T y_{\widetilde{\theta},s} y_{\theta,s} ds\big)} \mathcal{L}_N(y_{\widetilde{\alpha}}) dx dP_v(y).
\end{eqnarray}
\end{corollary}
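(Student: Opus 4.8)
The plan is to mirror the proofs of Corollaries \ref{th_proba_random_oneboundary1} and \ref{th_proba_nonrandom_twoboundary1}, combining the conditioning-on-$(W_T,v)$ device of the one-sided stochastic case with the two-boundary bookkeeping of the two-sided deterministic case. First I would start from the cdf representation $P_{b,c}^Y(T)=\proba(\Tau^Y_{b,c}\leq T)$ in Equation (\ref{PgZdef_random_twoboundary}) and apply the law of total probability, conditioning simultaneously on $W_T$ and on $v$. Since $v$ is independent of $W$ by the standing assumption of this appendix, the joint law of $(W_T,v)$ factors into the centered Gaussian density of $W_T$ with variance $T$ and the law $P_v$, so that
\begin{eqnarray*}
P_{b,c}^Y(T) = \int_{-\infty}^{\infty}\int_{\Pi_v} \proba(\Tau^Y_{b,c}\leq T\mid W_T=x,v=y)\,\frac{1}{\sqrt{2\pi T}}\exp\Big(-\frac{x^2}{2T}\Big)\,dx\,dP_v(y).
\end{eqnarray*}

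Next I would isolate the region where the conditional probability degenerates to $1$. Writing $Y_T=x+y_{u,T}$ for a fixed $y\in\Pi_v$, the event $\{\Tau^Y_{b,c}\leq T\}$ is certain whenever $Y_T\notin[y_{c,T},y_{b,T}]$, that is whenever $x\notin[y_{c,T}-y_{u,T},y_{b,T}-y_{u,T}]$, exactly as in Lemma \ref{prop_condproba_random_twoboundary1}. For each fixed $y$ the two tail integrals of the Gaussian density over $\{x>y_{b,T}-y_{u,T}\}$ and $\{x<y_{c,T}-y_{u,T}\}$ produce $1-\phi\big(\tfrac{y_{b,T}-y_{u,T}}{\sqrt{T}}\big)+\phi\big(\tfrac{y_{c,T}-y_{u,T}}{\sqrt{T}}\big)$; integrating these against $dP_v(y)$ yields the boundary terms displayed in the statement (written there with the shorthand $b_T,u_T,c_T$). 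The remaining double integral is then restricted to the central band $x\in[y_{c,T}-y_{u,T},y_{b,T}-y_{u,T}]$, equivalently $Y_T\in[c_T,b_T]$.

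On this central region I would substitute the conditional probability. For Equation (\ref{eq_proba_random_twoboundary}) I plug in the representation of $\proba(\Tau^Y_{b,c}\leq T\mid W_T,v)$ from Theorem \ref{th_approximation_random_twoboundary} (Equation (\ref{eq_nonapproximation_random_twoboundary})), pulling the $\sigma(W_T,v)$-measurable prefactor $\exp(-y_\alpha x+\tfrac12\int_0^T y_{\theta,s}^2\,ds)$ out in front and leaving the residual conditional expectation inside. For Equation (\ref{eq_proba_approximation_random_twoboundary}) I instead invoke Theorem \ref{th_approximation2_random_twoboundary}, which under the additional assumption (\ref{approximation_assumption_random_twoboundary}) replaces that residual expectation by the explicit Anderson series $\sum_{j\geq1}q^Y_{b,c}(j\mid Y_T)$ evaluated at $v=y$, multiplied by $\exp(y_{\widetilde\alpha}\int_0^T y_{\widetilde\theta,s}y_{\theta,s}\,ds)\,\mathcal{L}_N(y_{\widetilde\alpha})$.

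I expect the only genuine obstacle to be bookkeeping rather than substance: every object ($\alpha$, $\widetilde\alpha$, $\theta$, $\widetilde\theta$, $b$, $c$, $u$ and the series $q^Y_{b,c}$) must be carefully re-expressed as its realization $y_\alpha,y_{\widetilde\alpha},\dots$ at $v=y$ before the $dP_v(y)$ integration, and the integration limits $[y_{c,T}-y_{u,T},y_{b,T}-y_{u,T}]$ must be matched to the indicator $\mathbf{1}_{\{Y_T\in[c_T,b_T]\}}$ of Lemma \ref{prop_condproba_random_twoboundary1}. The analytic content—independence of $W_T$ and $v$, the law of total probability, and the degeneracy of the crossing probability outside the two-sided interval—is identical to that of the already-proved one-sided stochastic and two-sided deterministic corollaries, so no new estimate is required.
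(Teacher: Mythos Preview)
Your proposal is correct and follows essentially the same approach as the paper: condition on $(W_T,v)$ using their independence to factor the joint law, split off the tails where the conditional crossing probability is identically one, and then substitute Theorem \ref{th_approximation_random_twoboundary} (respectively Theorem \ref{th_approximation2_random_twoboundary} under assumption (\ref{approximation_assumption_random_twoboundary})) on the central band. Your remarks about the $y$-indexing bookkeeping are apt and in fact more careful than the paper's own proof, which contains some copy-paste slips from the one-sided case in its third displayed equality.
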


\section{Proofs in the two-sided stochastic boundary process case}
In this section, we consider the proofs in the case when the two-sided boundary and the drift are stochastic processes and the variance is random.

The elementary idea in the proofs of this section is to condition by both $W_T$ and $v$, i.e., to derive results of the form $\proba (\Tau^Y_{b,c} \leq T | W_T,v)$. The proof of Proposition \ref{th_condproba_random_twoboundary} is based on Lemma \ref{lemma_girsanov_random_twoboundary}.
\begin{proof}[Proof of Proposition \ref{th_condproba_random_twoboundary}] 
By definition of the conditional probability, Equation (\ref{eq_condproba_random_twoboundary0}) can be rewritten formally as 
\begin{eqnarray}
\label{eq_condexp_random_twoboundary0}
\esp_\proba \big[\mathbf{1}_{\{\Tau^Y_b \leq T \}} | W_T,v \big]& =& \esp_{\mathbb{Q}} \big[\mathbf{1}_{\{\Tau^Y_{b,c} \leq T \}} M_T^{-1} | W_T ,v\big].
\end{eqnarray}
For any $\sigma(W_T,v)$-measurable event $E_T$, we can use a change of probability in the expectation by Lemma \ref{lemma_girsanov_random_twoboundary}, along with \textbf{Assumption D}, and we obtain that
\begin{eqnarray}
\label{eq_condexp_random_twoboundary00}
\esp_\proba \big[\mathbf{1}_{\{\Tau^Y_{b,c} \leq T \}} \mathbf{1}_{E_T}\big]& =& \esp_{\mathbb{Q}} \big[\mathbf{1}_{\{\Tau^Y_{b,c} \leq T \}} M_T^{-1} \mathbf{1}_{E_T} \big].
\end{eqnarray}
We can deduce Equation (\ref{eq_condexp_random_twoboundary0}) from Equation (\ref{eq_condexp_random_twoboundary00}) by definition of the conditional expectation. By definition of the conditional probability, Equation (\ref{eq_condproba_random_twoboundary}) can be rewritten formally as 
\begin{eqnarray}
\label{eq_condexp_random_twoboundary}
\esp_\proba \big[\mathbf{1}_{\{\Tau^Y_{b,c} \leq T \}} | W_T ,v\big]\\ \nonumber = \esp_{\mathbb{Q}} \big[ M_T^{-1} \esp_\mathbb{Q} \big[\mathbf{1}_{\{\Tau^Y_{b,c} \leq T \}} | W_T,\overline{W}_T,v \big] | W_T,v \big].
\end{eqnarray}
By definition of the conditional expectation, we can deduce what follows. If we can show that for any $E_T$, which is $\sigma(W_T,v)$-measurable, that 
\begin{eqnarray}
\label{eq_condexpevent_random_twoboundary}
\esp_\proba \big[\mathbf{1}_{\{\Tau^Y_{b,c} \leq T \}} \mathbf{1}_{E_T} \big] \\ \nonumber = \esp_\proba \Big[ \esp_{\mathbb{Q}} \big[ M_T^{-1} \esp_\mathbb{Q} \big[\mathbf{1}_{\{\Tau^Y_{b,c} \leq T \}} | W_T,\overline{W}_T,v \big] | W_T,v \big] \mathbf{1}_{E_T} \Big],
\end{eqnarray}
then Equation (\ref{eq_condexp_random_twoboundary}) holds. Let $E_T$ be a $\sigma(W_T,v)$-measurable event. By Lemma \ref{lemma_girsanov_random_twoboundary} along with \textbf{Assumption D}, we  obtain that
\begin{eqnarray}
\label{proof0_random_twoboundary}
\esp_\proba \big[\mathbf{1}_{\{\Tau^Y_{b,c} \leq T \}} \mathbf{1}_{E_T} \big] & = & \esp_{\mathbb{Q}} \big[\mathbf{1}_{\{\Tau^Y_{b,c} \leq T \}} \mathbf{1}_{E_T} M_T^{-1}\big].
\end{eqnarray}
Then, we have by the law of total expectation that
\begin{eqnarray}
\label{proof1_random_twoboundary}
\esp_{\mathbb{Q}} \big[\mathbf{1}_{\{\Tau^Y_{b,c} \leq T \}} \mathbf{1}_{E_T} M_T^{-1}\big]   = \\ \nonumber \esp_{\mathbb{Q}} \big[ \esp_{\mathbb{Q}} \big[\mathbf{1}_{\{\Tau^Y_{b,c} \leq T \}} \mathbf{1}_{E_T} M_T^{-1} | W_T, \overline{W}_T ,v\big] \big].
\end{eqnarray}
Since $\mathbf{1}_{E_T}$ and $M_T^{-1}$ are $\sigma(W_T, \overline{W}_T ,v)$-measurable random variables, we can pull them out of the conditional expectation and deduce that
\begin{eqnarray}
\label{proof2_random_twoboundary}
\esp_{\mathbb{Q}} \big[ \esp_{\mathbb{Q}} \big[\mathbf{1}_{\{\Tau^Y_{b,c} \leq T \}} \mathbf{1}_{E_T} M_T^{-1} | W_T, \overline{W}_T,v\big] \big] \\ \nonumber =  \esp_{\mathbb{Q}} \big[ \mathbf{1}_{E_T} M_T^{-1}\esp_{\mathbb{Q}} \big[\mathbf{1}_{\{\Tau^Y_{b,c} \leq T \}} | W_T, \overline{W}_T,v \big] \big].
\end{eqnarray}
If we use Equations (\ref{proof0_random_twoboundary})-(\ref{proof1_random_twoboundary})-(\ref{proof2_random_twoboundary}), we can deduce that Equation (\ref{eq_condexpevent_random_twoboundary}) holds.
\end{proof}
 In what follows, we give the proof of Lemma \ref{lemma_representation_random_twoboundary}. 
\begin{proof}[Proof of Lemma \ref{lemma_representation_random_twoboundary}] 
By \textbf{Assumption D},  we can deduce that $ 0 < \int_0^T \theta_s^2 ds < \infty$ a.s.. Thus, we can normalize $\overline{W}_T$ by $\sqrt{\int_0^T \theta_s^2 ds}$ a.s. and we have that $\frac{\overline{W}_T}{\sqrt{\int_0^T \theta_s^2 ds}}$ is a mixed normal random variable a.s. by definition. Using the same arguments from the proof of Lemma 2.10, we have that its conditional mean under $\proba$ is a.s. equal to
\begin{eqnarray}
\label{proof_mean_random_twoboundary}
\esp_\proba \Big[\frac{\int_0^T \theta_s dW_s}{\sqrt{\int_0^T \theta_s^2 ds}}\Big| v \Big] \nonumber
 &=& 0.
\end{eqnarray}
We also have that its conditional variance under $\proba$ is a.s. equal to
\begin{eqnarray}
\label{proof_variance_random_twoboundary}
\operatorname{Var}_\proba \Big(\frac{\int_0^T \theta_s dW_s}{\sqrt{\int_0^T \theta_s^2 ds}}\Big| v\Big) &=& 1.
\end{eqnarray}
Since its conditional mean and conditional variance are nonrandom, we obtain that its mean under $\proba$ is equal to $\esp_\proba \Big[\frac{\int_0^T \theta_s dW_s}{\sqrt{\int_0^T \theta_s^2 ds}}\Big]
= \esp_\proba \Big[\esp_\proba \Big[\frac{\int_0^T \theta_s dW_s}{\sqrt{\int_0^T \theta_s^2 ds}}\Big| v \Big]\Big]= 0$ by the the law of total expectation and Equation (\ref{proof_mean_random_twoboundary}). Similarly, we obtain that its variance is equal to 1 by the law of total expectation and Equation (\ref{proof_variance_random_twoboundary}).
Thus, we have that $\frac{\overline{W}_T}{\sqrt{\int_0^T \theta_s^2 ds}}$ is a standard normal random variable under $\proba$. Since $(W_T,\frac{\overline{W}_T}{\sqrt{\int_0^T \theta_s^2 ds}})$ is a centered normal random vector under $\proba$, there exists a standard normal random variable $\widetilde{W}$ under $\proba$ which is independent of $W_T$ and such that Equation (\ref{def_Wtilde_random_twoboundary}) holds. Then, we can calculate that the covariance between $W_T$ and $\frac{\overline{W}_T}{\sqrt{\int_0^T \theta_s^2 ds}}$ under $\proba$ is equal to
\begin{eqnarray}
\label{proof_cov_random_twoboundary}
\operatorname{Cov}_\proba \Big(W_T,\frac{\overline{W}_T}{\sqrt{\int_0^T \theta_s^2 ds}} \Big) &=& \esp_\proba \Big[\frac{\int_0^T \theta_s ds }{\sqrt{\int_0^T \theta_s^2 ds}}\Big].
\end{eqnarray}
Now, we can calculate that the correlation between $W_T$ and $\frac{\overline{W}_T}{\sqrt{\int_0^T \theta_s^2 ds}}$ under $\proba$ is equal to
\begin{eqnarray*}
\rho & = & \frac{1}{T}\esp_\proba \Big[\frac{\int_0^T \theta_s ds }{\sqrt{\int_0^T \theta_s^2 ds}}\Big].
\end{eqnarray*} 
Equation (\ref{def_Wtilde0_random_twoboundary}) can be deduced directly from Equation (\ref{def_Wtilde_random_twoboundary}). Moreover, we can reexpress $\widetilde{W}$ as
\begin{eqnarray*}
\widetilde{W} & = & \int_0^T \widetilde{\theta}_s dW_s.
\end{eqnarray*}
Moreover, we can deduce that $\widetilde{W}+\int_0^T \widetilde{\theta}_s \theta_s ds$ is a standard normal variable under $\mathbb{Q}$. This is due to its expression (\ref{def_Wtilde1_random_twoboundary}) and since by Lemma \ref{lemma_girsanov_random_twoboundary} along with \textbf{Assumption D}, $Y$ is a Wiener process under $\mathbb{Q}$. Finally, $\mathcal{D}(\widetilde{W}+\int_0^T \widetilde{\theta}_s \theta_s ds | v)$ is standard normal under $\mathbb{Q}$ by Equation (\ref{def_Wtilde1_random_twoboundary}).
\end{proof}
 We provide now the proof of Theorem \ref{th_approximation_random_twoboundary}, which is based on Lemma \ref{lemma_representation_random_twoboundary}.
\begin{proof}[Proof of Theorem \ref{th_approximation_random_twoboundary}] Using the same arguments from the proof of Theorem 2.11, we can reexpress $M_T$ as
\begin{eqnarray}
\nonumber M_T & = &  \exp{\big(\alpha W_T - \frac{1}{2} \int_0^T \theta_s^2 ds\big)} \exp{\big(\widetilde{\alpha} \widetilde{W}\big)}.
\end{eqnarray}
Then, we have
\begin{eqnarray}
\nonumber\proba (\Tau^Y_{b,c} \leq T | W_T,v) & = & \exp{\big(-\alpha W_T + \frac{1}{2} \int_0^T \theta_s^2 ds\big)} \\ && \nonumber \times\esp_{\mathbb{Q}} \big[\mathbf{1}_{\{\Tau^Y_{b,c} \leq T \}}  \exp{\big(-\widetilde{\alpha} \widetilde{W}\big)} | W_T,v \big].
\end{eqnarray}
Thus, we have shown Equation (\ref{eq_nonapproximation_random_twoboundary}).
\end{proof}
 We now give the proof of Lemma \ref{prop_condproba_random_twoboundary1}.
\begin{proof}[Proof of Lemma \ref{prop_condproba_random_twoboundary1}] 
By definition of the conditional probability, Equation (\ref{propeq_condproba_random_twoboundary1}) can be rewritten formally as 
\begin{eqnarray}
\label{eq_condexp_random_twoboundary1}
\esp_\mathbb{Q} \big[\mathbf{1}_{\{\Tau^Y_{b,c} \leq T \}} | W_T \big]& =& \sum_{j=1}^{\infty} q^Y_{b,c}(j | Y_T) \mathbf{1}_{\{Y_T \in [c_T,b_T]\}} + \mathbf{1}_{\{Y_T \notin [c_T,b_T]\}}.
\end{eqnarray}
By Lemma \ref{lemma_girsanov_random_twoboundary} along with \textbf{Assumption D}, $Y$ is a Wiener process under $\mathbb{Q}$. Then, we have by Anderson (\citeyear{anderson1960modification}) (Theorem 4.2, pp. 178-179) that Equation (\ref{eq_condexp_random_twoboundary1}) holds.
\end{proof}
 We provide now the proof of Theorem \ref{th_approximation2_random_twoboundary}, which is based on Lemma \ref{prop_condproba_random_twoboundary1}.
\begin{proof}[Proof of Theorem \ref{th_approximation2_random_twoboundary}] We have
\begin{eqnarray}
\nonumber \esp_{\mathbb{Q}} \big[\mathbf{1}_{\{\Tau^Y_{b,c} \leq T \}}  \exp{\big(-\widetilde{\alpha} \widetilde{W}\big)} | W_T,v \big]  & = & \esp_{\mathbb{Q}} \big[\mathbf{1}_{\{\Tau^Y_{b,c} \leq T \}}  | W_T,v \big] \\ && \nonumber \times \esp_{\mathbb{Q}} \big[  \exp{\big(-\widetilde{\alpha} \widetilde{W}\big)} | W_T,v \big]\\ \nonumber & = & \Big(\sum_{j=1}^{\infty} q^Y_{b,c}(j | Y_T) \mathbf{1}_{\{Y_T \in [c_T,b_T]\}} \\ && \label{proof_approximation_random_twoboundary} + \mathbf{1}_{\{Y_T \notin [c_T,b_T]\}}\Big)\\ && \nonumber \times \esp_{\mathbb{Q}} \big[  \exp{\big(-\widetilde{\alpha} \widetilde{W}\big)} | W_T,v \big],
\end{eqnarray}
where we use Assumption (\ref{approximation_assumption_random_twoboundary}) in the first equality, and Equation (\ref{propeq_condproba_random_twoboundary1}) from Lemma \ref{prop_condproba_random_twoboundary1} along with \textbf{Assumption D} in the second equality. Finally, we have
\begin{eqnarray}
\nonumber \esp_{\mathbb{Q}} \big[  \exp{\big(-\widetilde{\alpha} \widetilde{W}\big)} | W_T,v \big] & = & \esp_{\mathbb{Q}} \big[  \exp{\big(-\widetilde{\alpha} \widetilde{W}\big)} | v\big]\\ \nonumber & = & \exp{\big(\widetilde{\alpha} \int_0^T \widetilde{\theta}_s \theta_s ds\big)} \\ && \nonumber \times\esp_{\mathbb{Q}} \big[  \exp{\big(-\widetilde{\alpha} \big(\widetilde{W}+\int_0^T \widetilde{\theta}_s \theta_s ds\big)\big)} | v \big]\\ \nonumber & = & \exp{\big(\widetilde{\alpha} \int_0^T \widetilde{\theta}_s \theta_s ds\big)} \\ && \nonumber \times\esp_{\mathbb{Q}} \big[  \exp{\big(-\widetilde{\alpha} \big(\widetilde{W}+\int_0^T \widetilde{\theta}_s \theta_s ds\big)\big)} \big]\\ \nonumber & = & \exp{\big(\widetilde{\alpha} \int_0^T \widetilde{\theta}_s \theta_s ds\big)} \esp_{\proba} \big[  \exp{\big(-\widetilde{\alpha} N \big)} \big]\\ \label{proof_approximation0_random_twoboundary} & = & \exp{\big(\widetilde{\alpha} \int_0^T \widetilde{\theta}_s \theta_s ds\big)} \mathcal{L}_N(\widetilde{\alpha}).
\end{eqnarray}
Here, we use the fact that $\widetilde{W}$ is independent from $W_T$ in the first equality, the fact that $\theta_t$ and $\widetilde{\theta}_t$ for any $t \in [0,T]$ are $\sigma(v)$-measurable random variables in the second equality, the fact that $\mathcal{D}(\widetilde{W}+\int_0^T \widetilde{\theta}_s \theta_s ds | v)$ is standard normal under $\mathbb{Q}$ by Lemma \ref{lemma_representation_random_twoboundary} along with \textbf{Assumption D} in the third equality, the fact that $\widetilde{W}+\int_0^T \widetilde{\theta}_s \theta_s ds$ is a standard normal variable under $\mathbb{Q}$ by Lemma \ref{lemma_representation_random_twoboundary} along with \textbf{Assumption D} in the fourth equality, and Equation (2.14
) in the last equality. We can deduce Equation (\ref{eq_approximation_random_twoboundary}) from Equations (\ref{eq_nonapproximation_random_twoboundary}), (\ref{proof_approximation_random_twoboundary}) and (\ref{proof_approximation0_random_twoboundary}).
\end{proof}
 Finally, we get $P_{b,c}^Y(T)$ in the next theorem, by integrating $\proba (\Tau^Y_{b,c} \leq T | W_T,v)$ with respect to the value of $(W_T,v)$.
\begin{proof}[Proof of Corollary \ref{th_proba_random_twoboundary1}] 
We can calculate that
\begin{eqnarray*}
P_{b,c}^Y(T) & = & \int_{-\infty}^{\infty} \int_{\Pi_v} \proba (\Tau^Y_{b,c} \leq T | W_T = x,v=y) \\ && \nonumber \times\frac{1}{\sqrt{2 \pi T}} \exp{\Big(-\frac{x^2}{2T}\Big)} dx dP_v(y)\\ & = & 1 - \phi (\frac{b_T-u_T}{\sqrt{T}}) + \phi (\frac{c_T-u_T}{\sqrt{T}}) \\&&+ \int_{c_T-u_T}^{b_T-u_T} \int_{\Pi_v} \proba (\Tau^Y_b \leq T | W_T = x,v=y) \\ & & \nonumber \times \frac{1}{\sqrt{2 \pi T}} \exp{\Big(-\frac{x^2}{2T}\Big)} dx dP_v(y)\\ & = & 1 - \phi (\frac{b-u_T}{\sqrt{T}}) \\ & & \nonumber+ \int_{-\infty}^{b-u_T} \int_{\Pi_v} \frac{1}{\sqrt{2 \pi T}} \exp{\Big(-\frac{x^2}{2T}\Big)} \exp{\big(-y_\alpha x + \frac{1}{2} \int_0^T y_{\theta,s}^2 ds\big)} \\ &&\times \esp_{\mathbb{Q}} \big[\mathbf{1}_{\{\Tau^Y_b \leq T \}}  \exp{\big(-\widetilde{\alpha} \widetilde{W}\big)} | W_T =x,v=y \big] dx dP_v(y).
\end{eqnarray*}
Here, we use Equation (\ref{PgZdef_random_twoboundary}), regular conditional probability and the fact that $W_T$ and $v$ are independent in the first equality, the fact that $\proba (\Tau^Y_{b,c} \leq T | W_T = x)=1$ for any $x \geq b_T-u_T$ and any $x \leq c_T-u_T$ in the second equality, and Equation (\ref{eq_nonapproximation_random_twoboundary}) in the third equality. We have thus shown Equation (\ref{eq_proba_random_twoboundary}). Equation (\ref{eq_proba_approximation_random_twoboundary}) can be shown following the same first two equalities and using Equation (\ref{eq_approximation_random_twoboundary}) in the third equality.
\end{proof}

\end{appendix}


\bibliographystyle{imsart-nameyear} 
\bibliography{biblio}       

\end{document}